\newcommand{\Z}{\mathbb{Z}}
\newcommand{\Q}{\mathbb{Q}}
\newcommand{\C}{\mathbb{C}}
\newcommand{\Qlbar}{\overline{\Q_\ell}}
\newcommand{\bk}{\Bbbk}
\DeclareMathOperator{\id}{id}
\DeclareMathOperator{\im}{im}
\DeclareMathOperator{\Hom}{Hom}
\DeclareMathOperator{\End}{End}
\DeclareMathOperator{\Ext}{Ext}
\newcommand{\simto}{\overset{\sim}{\to}}
\newcommand{\Ob}{\mathrm{Ob}}
\DeclareMathOperator{\diag}{diag}
\newcommand{\fg}{\mathfrak{g}}
\newcommand{\fb}{\mathfrak{b}}
\newcommand{\fh}{\mathfrak{h}}
\newcommand{\cO}{\mathcal{O}}
\newcommand{\gr}{\mathrm{gr}}
\newcommand{\ngmod}{\mathsf{mod}}
\newcommand{\gmod}{\mathsf{gmod}}
\newcommand{\lh}{\text{-}}
\newcommand{\opp}{\mathrm{opp}}
\DeclareMathOperator{\gdim}{gdim}
\DeclareMathOperator{\Sym}{Sym}
\newcommand{\fm}{\mathfrak{m}}
\newcommand{\parity}{\mathrm{parity}}
\newcommand{\proj}{\mathrm{proj}}
\newcommand{\ux}{{\underline{x}}}
\newcommand{\uy}{{\underline{y}}}
\newcommand{\uw}{{\underline{w}}}
\newcommand{\uH}{\underline{H}}
\DeclareMathOperator{\ch}{ch}
\newcommand{\geom}{\mathrm{geom}}
\newcommand{\SBim}{\mathsf{SBim}}
\newcommand{\oSBim}{\overline{\SBim}}
\newcommand{\Parity}{\mathsf{Parity}}
\newcommand{\cB}{\mathcal{B}}
\newcommand{\BS}{\mathrm{BS}}
\newcommand{\oB}{\overline{B}}
\newcommand{\BsR}{
 \begin{tikzpicture}[thick,baseline=-2pt,scale=0.05]
  \draw (0,-5) to (0,0);
  \node at (0,0) {\small $\bullet$};
 \end{tikzpicture}
}
\newcommand{\RBs}{
 \begin{tikzpicture}[thick,baseline=-2pt,scale=0.05]
  \node at (0,0) {\small $\bullet$};
  \draw (0,0) to (0,5);
 \end{tikzpicture}
}
\newcommand{\BsBs}{
 \begin{tikzpicture}[thick,baseline=-2pt,scale=0.05]
  \draw (0,-5) to (0,5);
  \node at (0,0) {}; 
 \end{tikzpicture}
}
\newcommand{\BsRBs}{
 \begin{tikzpicture}[thick,baseline=-2pt,scale=0.05]
  \draw (0,-5) to (0,-1.7);
  \node at (0,-1.7) {\small $\bullet$};
  \node at (0,1.7) {\small $\bullet$};
  \draw (0,1.7) to (0,5);
 \end{tikzpicture}
}
\newcommand{\Kb}{K^{\mathrm{b}}}
\newcommand{\Db}{D^{\mathrm{b}}}
\newcommand{\Dbm}{D^\mathrm{b}_\mathrm{m}}
\newcommand{\mix}{\mathrm{mix}}
\newcommand{\Dmix}{D^\mix}
\newcommand{\Pmix}{\mathsf{P}^\mix}
\newcommand{\Tmix}{\mathsf{Tilt}^\mix}
\newcommand{\scS}{\mathscr{S}}
\newcommand{\IC}{\mathcal{IC}}
\newcommand{\cP}{\mathcal{P}}
\newcommand{\cI}{\mathcal{I}}
\newcommand{\cT}{\mathcal{T}}
\newcommand{\pt}{\mathrm{pt}}
\newcommand{\BGB}{B\backslash G/B}
\newcommand{\UGB}{U\backslash G/B}
\newcommand{\BGU}{B\backslash G/U}
\newcommand{\UGU}{U\backslash G/U}
\newcommand{\BGUvee}{B^\vee \backslash G^\vee/U^\vee}
\newcommand{\BGPs}{B\backslash G/P^s}
\newcommand{\UGPs}{U\backslash G/P^s}
\newcommand{\BGPt}{B\backslash G/P^t}
\newcommand{\UGPt}{U\backslash G/P^t}
\newcommand{\hDbm}{\widehat{D}^\mathrm{b}_\mathrm{m}}
\newcommand*\leftdash{\rotatebox[origin=c]{-45}{$\dabar@\dabar@\dabar@$}}
\newcommand*\rightdash{\rotatebox[origin=c]{45}{$\dabar@\dabar@\dabar@$}}
\newcommand{\UGUby}{B\leftdash G\rightdash B}
\newcommand{\ubk}{\underline{\bk}}
\newcommand{\cE}{\mathcal{E}}
\newcommand{\cF}{\mathcal{F}}
\newcommand{\cG}{\mathcal{G}}
\newcommand{\cH}{\mathcal{H}}
\newcommand{\For}{\mathsf{For}}
\newcommand{\la}{\langle}
\newcommand{\ra}{\rangle}
\newcommand{\bH}{\mathbb{H}}
\newcommand{\bV}{\mathbb{V}}
\newcommand{\can}{\mathsf{can}}
\newcommand{\cA}{\mathcal{A}}
\newcommand{\sC}{\mathsf{C}}
\DeclareMathOperator{\HOM}{HOM}
\DeclareMathOperator{\END}{END}
\newcommand{\cHom}{\mathcal{H}\mathrm{om}}
\newcommand{\cHOM}{\mathcal{H}\mathrm{OM}}
\newcommand{\Ch}{\mathrm{Ch}}
\newcommand{\Chb}{\mathrm{Ch}^\mathrm{b}}
\newcommand{\PChb}{\mathrm{PCh}^\mathrm{b}}
\newcommand{\PKb}{\mathrm{P}K^\mathrm{b}}
\newcommand{\z}{{}^0\!}
\newcommand{\p}{{}^p\!}
\renewcommand{\ddots}{\rotatebox{-20}{$\cdots$}}
\newtheorem*{thm*}{Theorem}
\numberwithin{equation}{section}
\newtheorem{thm}{Theorem}[section]
\newtheorem{lem}[thm]{Lemma}
\newtheorem{prop}[thm]{Proposition}
\newtheorem{cor}[thm]{Corollary}
\theoremstyle{definition}
\newtheorem{defn}[thm]{Definition}
\theoremstyle{remark}
\newtheorem{rmk}[thm]{Remark}
\newtheorem{ex}[thm]{Example}
\title{Modular Koszul duality for Soergel bimodules}
\author{Shotaro Makisumi}
\date{April 3, 2020}
\address{Department of Mathematics, Columbia University, New York, NY, U.S.A.}
\email{makisum@math.columbia.edu}
\begin{document}

\begin{abstract}
 We generalize the modular Koszul duality of Achar--Riche~\cite{AR-II} to the setting of Soergel bimodules associated to any finite Coxeter system. The key new tools are a functorial monodromy action and wall-crossing functors in the mixed modular derived category of ibid. In characteristic 0, this duality together with Soergel's conjecture (proved by Elias--Williamson~\cite{EW-hodge}) imply that our Soergel-theoretic graded category $\cO$ is Koszul self-dual, generalizing the result of Beilinson--Ginzburg--Soergel~\cite{Soe90, BGS}.
\end{abstract}

\maketitle

\section{Introduction}

Let $\fg$ be a semisimple complex Lie algebra. Fix a Borel subalgebra $\fb$ containing a Cartan subalgebra $\fh$, and consider the principal block $\cO_0$ of the associated BGG category $\cO$, i.e.~the block containing the trivial representation. As is well known, $\cO_0$ is a finite-length abelian category with enough projectives. It therefore has a minimal projective generator $P$, so that $\cO_0$ is equivalent to the category of finitely-generated modules over
\[
 A := \End_{\cO_0}(P)^\opp.
\]

Beilinson--Ginzburg--Soergel~\cite{BGS} showed that this ring admits a \emph{Koszul grading}: a positive grading $A = \bigoplus_{i \ge 0} A_i$ with $A_0$ semisimple, and such that the left $A$-module $A_0 = A/A_{>0}$ admits a graded projective resolution $P^\bullet \to A_0$ where $P^i$ is generated in degree $i$. The existence of a Koszul grading on the algebra controlling $\cO_0$ is a deep fact closely related to the Kazhdan--Lusztig conjecture.

For a Koszul ring $B = \bigoplus_{i \ge 0} B_i$, its Koszul dual ring is $E(B) := \Ext_{B\lh\ngmod}^\bullet(B_0)$, where Ext is taken in ungraded $B$-modules. It was moreover shown in \cite{Soe90, BGS} that $A$ is Koszul self-dual: $A \cong E(A)$ as graded algebras. Thus the Koszul grading on $A$ reveals a hidden self-duality of $\cO_0$.

The Weyl group $W$ of $\fg$ acts on the Cartan subalgebra $\fh$. By work of Soergel, the algebra $A$ only depends on this $W$-representation. The theory of Soergel bimodules \cite{Soe92, Soe00, Soe07} allows one more generally to define a graded algebra $A_{W, \fh}$ for any suitable ``reflection faithful realization'' $\fh$ of an arbitrary Coxeter system $W$. See \S\ref{sec:main-result} for background on Soergel bimodules as well as the definition of $A_{W, \fh}$.

For Soergel bimodules, one still has an analogue of the Kazhdan--Lusztig conjecture known as Soergel's conjecture. Elias--Williamson suggested \cite[Remark~3.4]{EW-soergel-calculus} that there should also be a rich Koszul duality in this generality. In this paper, we realize this vision for finite Coxeter systems.

The following theorem is a consequence of our main result together with the Soergel's conjecture proved by Elias--Williamson \cite{EW-hodge}.
\begin{thm} \label{thm:ksd-geom}
 For any finite (not necessarily crystallographic) Coxeter group $W$ and its geometric representation $\fh_\geom$, the graded algebra $A_{W, \fh_\geom}$ is Koszul self-dual.
\end{thm}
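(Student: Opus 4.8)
The plan is to deduce Theorem~\ref{thm:ksd-geom} from the main Koszul duality result of the paper, which should provide an equivalence of graded categories identifying $A_{W,\fh_\geom}\lh\gmod$ with the corresponding ``Koszul dual'' category built from the same realization (or its dual realization $\fh_\geom^\vee$). Concretely, I expect the main theorem to assert that the Soergel-theoretic graded category $\cO$ attached to $(W,\fh)$ is equivalent, via a mixed-derived-category construction, to the one attached to the Langlands-dual datum $(W,\fh^\vee)$, with the equivalence intertwining the grading shift with the cohomological shift. Thus the statement that $A_{W,\fh_\geom}$ is \emph{self}-dual will come down to two ingredients: first, that this abstract duality holds unconditionally over a field of characteristic $0$ once Soergel's conjecture is available (so that the ``mixed'' category behaves like a genuine graded highest-weight category with the expected Ext-vanishing), and second, that for the geometric representation the dual realization $\fh_\geom^\vee$ produces the \emph{same} algebra, i.e.\ $A_{W,\fh_\geom^\vee}\cong A_{W,\fh_\geom}$ as graded algebras.

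First I would invoke Soergel's conjecture (Elias--Williamson~\cite{EW-hodge}): in characteristic $0$ the indecomposable Soergel bimodules $B_w$ categorify the Kazhdan--Lusztig basis, so the category of Soergel bimodules is ``Kazhdan--Lusztig'' in the strongest sense and the parity/mixed formalism of Achar--Riche collapses to the classical picture. This guarantees that the hypotheses needed to run the paper's main theorem are met, and that the resulting graded algebra on the dual side is again of the form $A_{W,\fh'}$ for the relevant realization $\fh'$ --- no stray semisimplicity or tilting pathologies. Then I would apply the main theorem to obtain a graded algebra isomorphism $E(A_{W,\fh_\geom}) \cong A_{W,\fh'}$, where $E(-)$ denotes the Koszul dual and $\fh'$ is the realization appearing on the dual side of the construction.

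Second I would identify $\fh'$ with $\fh_\geom$. The geometric representation is the reflection representation of $W$ equipped with the standard $W$-invariant bilinear form; this form is non-degenerate precisely because $W$ is \emph{finite}, so it furnishes a $W$-equivariant isomorphism $\fh_\geom \simto \fh_\geom^*$. Hence whatever dual realization the construction feeds into $A_{W,\fh'}$ --- whether it is literally $\fh_\geom^\vee = \fh_\geom^*$ with the dual root/coroot data, or the realization over the dual root system --- is isomorphic as a realization of $(W,S)$ to $\fh_\geom$ itself. Since $A_{W,\fh}$ depends only on the isomorphism class of the realization (as recalled in~\S\ref{sec:main-result}), this yields $A_{W,\fh'}\cong A_{W,\fh_\geom}$ as graded algebras, and combining with the previous paragraph gives $E(A_{W,\fh_\geom})\cong A_{W,\fh_\geom}$, i.e.\ Koszul self-duality.

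The main obstacle is not any of these deductions individually but making sure the bookkeeping of realizations is exactly right: the paper's main theorem presumably has as its output a category built from a specific dual combinatorial datum (dual roots, coroots, and the Langlands-dual Cartan), and one must check that for the geometric realization this datum is genuinely \emph{self-Langlands-dual up to isomorphism of realizations}, including any integrality or ``reflection faithfulness'' conditions imposed in~\S\ref{sec:main-result}. In the non-crystallographic cases (dihedral $I_2(m)$ with $m=5$ or $\geq 7$, $H_3$, $H_4$) there is no root datum at all, so the relevant notion of ``dual realization'' must be the linear-algebraic one $\fh\mapsto\fh^*$, and the key point is simply the non-degeneracy of the $W$-invariant form; verifying that this is the notion the main theorem actually uses, and that $A_{W,\fh}$ is insensitive to replacing $\fh$ by an isomorphic realization, is where I would concentrate the care. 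Once that is pinned down, Theorem~\ref{thm:ksd-geom} is immediate.
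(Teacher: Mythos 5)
Your proposal is correct and matches the paper's own route: combine the main duality theorem (in the paper packaged as Theorem~\ref{thm:mkd-algebra}, via Proposition~\ref{prop:soergel-koszul}, with Soergel's conjecture supplied by Elias--Williamson) with the observation that for finite $W$ the nondegenerate invariant form makes $\fh_\geom$ self-dual as a realization, so the dual side returns the same algebra. The only cosmetic difference is that the paper states the algebra-level duality for $A^{\proj}_{W,\fh}$ and passes to $A_{W,\fh}=A^{\parity}_{W,\fh}$ through its Koszul duality with $A^{\proj}_{W,\fh}$, a bookkeeping step your argument absorbs implicitly.
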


\subsection{Mixed modular derived category}
The Koszul duality of \cite{BGS} is a derived equivalence involving $\cO_0^\gr$ (``graded category $\cO_0$''), a graded version of $\cO_0$. As a first step towards Theorem~\ref{thm:ksd-geom}, we defined in \cite{Mak-moment} a Soergel-theoretic analogue of $\cO_0^\gr$ for a general pair $(W, \fh)$ using the mixed modular derived category formalism of~\cite{AR-II}.

As explained in~\cite{BGS}, the grading on $\cO_0$ comes from mixed geometry; $\cO_0^\gr$ may be identified with a certain category of mixed $\ell$-adic perverse sheaves on the flag variety defined over a finite field. In~\cite{AR-II}, Achar--Riche introduced a new approach to defining the mixed category based on the parity complexes of~\cite{JMW}. This ``mixed (modular) derived category'' has a notion of weights and Tate twist, and for positive characteristic coefficients, may serve as a replacement for mixed $\ell$-adic sheaves. In particular, for a connected complex reductive group $G$ with a Borel subgroup $B$,~\cite{AR-II} defined and studied the categories of $B$-constructible ``mixed complexes'' and ``mixed perverse sheaves'' on the flag variety $G/B$ with coefficients in a field $\bk$:
\[
 \Dmix_{(B)}(G/B, \bk) \supset \Pmix_{(B)}(G/B, \bk).
\]

Parity complexes on the flag variety are related to Soergel (bi)modules (see \S\ref{ss:relation-to-parity}). Motivated by this connection, in~\cite{Mak-moment} we adapted the Achar--Riche mixed derived category to the setting of Soergel bimodules associated to $(W, \fh)$, not necessarily arising from complex reductive groups; this is our analogue of $\cO_0^\gr$. We will recall the facts we need from this framework in~\S\ref{ss:mixed-modular}.

\subsection{Modular Koszul duality}
Geometrically, the Koszul duality of $\cO_0$ is a derived equivalence relating mixed sheaves on Langlands dual flag varieties. Achar--Riche~\cite{AR-II} proved an analogous equivalence for the mixed modular derived category, which they call ``modular Koszul duality,'' although it may not involve any Koszul algebra.

Our main result (Theorem~\ref{thm:mkd}) is an analogous result in the setting of Soergel theory for any finite Coxeter system. It should be thought of as a derived equivalence relating mixed modular sheaves on possibly non-existent Langlands dual flag varieties.

Our key new tools are a monodromy action and wall-crossing functors in our analogue of the mixed derived category. These tools allow us to imitate the strategy of the characteristic-zero Koszul duality of Bezrukavnikov--Yun~\cite{BY}. In particular, even in the geometric setting of~\cite{AR-II}, our approach gives a new proof of modular Koszul duality that is independent of~\cite{BY} and of the Kazhdan--Lusztig conjecture.

In~\cite{AR}, Achar--Riche identified $\Pmix_{(B)}(G/B, \C)$ with the $\cO_0^\gr$ of~\cite{BGS}.\footnote{This, however, relies on the Koszulity of $\cO_0^\gr$, hence on some form of the Kazhdan--Lusztig conjecture.} Via this identification, our approach also gives a new proof of the classical Koszul duality of~\cite{Soe90, BGS}.

\begin{rmk}
 In \cite{Mak-moment}, we crucially used the Braden--MacPherson and Fiebig theory of moment graph sheaves. However, once the framework is set up, all results we quote from ibid.~are exact analogues of those of~\cite{AR-II}, which in turn are analogues of well-known results in characteristic 0. The constructions we introduce are already new for $\Dmix_{(B)}(G/B, \C)$, and should be accessible to readers who prefer to think in this setting.
\end{rmk}

\subsection{Related work}
For finite dihedral groups, the Koszulity and Koszul self-duality of $A_{W, \fh_\geom}$ was proved earlier by explicit methods by Sauerwein~\cite{Sau}.

After an early draft of this article had been written in 2015, the author learned of a project by Achar, Riche, and Williamson that contained constructions similar to those of this article. Our joint work \cite{AMRW} (in particular) clarifies and extends the constructions in \S\ref{sec:monodromy}--\ref{sec:wall-crossing}.

\subsection{Contents}
In~\S\ref{sec:main-result}, we recall some background on Soergel (bi)modules and the mixed modular derived category and state the main result. After some preliminaries in~\S\ref{sec:preliminaries}, we introduce the key new constructions in~\S\ref{sec:monodromy} (monodromy action) and~\S\ref{sec:wall-crossing} (wall-crossing functors). The main result is proved in~\S\ref{sec:koszul-duality}.

\subsection{Acknowledgements}
This article is an edited version of one part of the author's 2017 Ph.D.~thesis at Stanford University. It is a pleasure to thank my advisor, Z.~Yun, for his continued guidance and support---for encouraging and supporting my travels through his Packard Foundation fellowship, and for countless hours of helpful conversation, mathematical or otherwise, throughout my time as a graduate student.

I thank B.~Elias for his 2014 WARTHOG lectures on Soergel bimodules at the University of Oregon. I owe much to P.~Fiebig for his advice and encouragement. The wall-crossing functor of~\S\ref{sec:wall-crossing} has its origins in my discussions with him during a visit to Universit\"at Erlangen--N\"urnberg in September 2015. The collaboration~\cite{AMRW} resulted from a workshop at the American Institute for Mathematics in March 2016. I thank AIM for hosting the workshop, and P.~Achar, S.~Riche, and G.~Williamson for many enlightening discussions as well as comments on a preliminary draft. I also thank G.~Dhillon for valuable comments on the first arXiv version.

\section{Background and main result}
\label{sec:main-result}

\subsection{Background on the Hecke algebras}
\label{ss:coxeter}
Let $(W, S)$ be a Coxeter system. We always tacitly assume that $|S| < \infty$. We denote by $e$ the identity element, $\ell\colon W \to \Z_{\ge0}$ the length function, and $\le$ the Bruhat order. An \emph{expression} is a word $\uw = s_1 \cdots s_k$ in $S$. We write $w = s_1 \cdots s_k$ for the corresponding element in $W$.

We follow Soergel's normalization for the Hecke algebra; see~\cite{Soe90} for details. The Hecke algebra $\cH_W$ is the algebra with free $\Z[v, v^{-1}]$-basis $\{ H_w \}_{w \in W}$ and multiplication
\[
 H_wH_s = \begin{cases}
           H_{ws} &\mbox{if } ws > w, \\
           (v^{-1} - v)H_w + H_{ws} &\mbox{if } ws < w.
          \end{cases}
\]
This algebra has another basis, the Kazhdan--Lusztig basis $\{ \uH_w \}_{w \in W}$. In our normalization, $\uH_s = H_s + vH_e$ for $s \in S$. For any expression $\uw = s_1 \cdots s_k$, we set $\uH_\uw := \uH_{s_1} \cdots \uH_{s_k}$.

\subsection{Background on Soergel (bi)modules}
\label{ss:sbim}
A \emph{realization} of $(W, S)$ over a commutative ring $\bk$ \cite{Eli-dihedral, EW-soergel-calculus} is a triple
\[
 (\fh, \{\alpha_s^\vee\}_{s \in S} \subset \fh, \{\alpha_s\}_{s \in S} \subset \fh^*),
\]
where $\fh$ is a finite-rank free $\bk$-module and $\fh^* = \Hom_\bk(\fh, \bk)$, such that: $\la \alpha_s^\vee, \alpha_s \ra = 2$ for all $s \in S$; the assignment $s(v) := v - \la v, \alpha_s \ra \alpha_s^\vee$ for $s \in S, v \in \fh$ defines a representation of $W$ on $\fh$; and a technical condition \cite[(3.3)]{EW-soergel-calculus} that will always holds in our setting. We often simply speak of a realization $\fh$.

We assume that $\bk$ is a field of characteristic not equal to 2, and that the $W$-representation $\fh$ is \emph{reflection faithful} in the sense of \cite[Definition~1.5]{Soe07}:\footnote{Soergel assumed in addition that $\bk$ is infinite. However, as has been noted for example in \cite[\S1.3]{Ric}, this assumption is not necessary and was only imposed in \cite{Soe07} in order to identify $R = \Sym_\bk(\fh^*)$ with the ring of regular functions on $\fh$.} $\fh$ is faithful, and for all $w \in W$, the fixed subspace $\fh^w$ has codimension 1 if and only if $w$ is a reflection in $W$, i.e.~a conjugate of an element of $S$. These conditions guarantee that Soergel's theory in \cite{Soe07} is available.

We call the data $(W, \fh)$ a \emph{reflection faithful realization}. This is the starting data for our categories.

\begin{rmk}
 Reflection faithfulness is a serious condition. Note that an infinite Coxeter system admits no faithful representation over a finite field or its algebraic closure. For an irreducible finite Coxeter system, the only reflection faithful representations over $\mathbb{R}$ are Galois conjugates of the geometric representation (see \cite[Theorem~1.2]{MM} for a more general statement).
\end{rmk}
\begin{ex} \label{ex:cartan-realization}
 Let $G$ be a connected reductive group, and choose a Borel subgroup $B$ containing a maximal torus $T$. The associated Weyl group $W$ has the natural structure of a Coxeter system. Let $X_*(T)$ be group of cocharacters of $T$. Then for any field $\bk$, the base change $\bk \otimes_\Z X_*(T)$ has the natural structure of a realization of $W$. In this way, the triple $(G,B,T)$ gives rise to a realization $(W,\fh)$, called the \emph{Cartan realization}, over any field $\bk$. Libedinsky shows in \cite[Appendix]{Lib} that the Cartan realization is reflection faithful if $\operatorname{char} \bk \notin \{2, 3\}$.
\end{ex}

Let $R = \Sym_\bk(\fh^*)$ be the symmetric algebra, viewed as a ($\Z$-)graded algebra with $\deg \fh^* = 2$. Let $R\lh\gmod\lh R$ be the category of ($\Z$-)graded $R$-bimodules and graded $R$-bimodule homomorphisms (of degree 0). This category has a grading shift autoequivalence $\{1\}$, defined on $M = \bigoplus_{i \in \Z} M_i$ by $(M\{n\})_i = M_{n+i}$. It is also naturally monoidal with the product $\otimes_R$, which we often omit from the notation.

The $W$-action on $\fh$ induces a $W$-action on $R$. For $s \in S$, let $R^s$ denote the ring of $s$-invariants in $R$. Define the graded $R$-bimodule
\[
 B_s := R \otimes_{R^s} R\{1\}
\]
and the endofunctor
\[
 \theta_s = B_s \otimes_R (-)\colon R\lh\gmod\lh R \to R\lh\gmod\lh R.
\]
For any expression $\uw = s_1 \cdots s_k$, define the \emph{Bott--Samelson bimodule}
\[
 B_\uw := \theta_{s_1} \cdots \theta_{s_k}(R) = B_{s_1} \cdots B_{s_k},
\]
where $R$ is the regular $R$-bimodule. (Note that $B_\varnothing = R$ for the empty expression $\varnothing$.) Let $\SBim_\BS(W, \fh)$ be the smallest strictly full subcategory of $R\lh\gmod\lh R$ containing all Bott--Samelson bimodules and closed under $\oplus$ and $\{n\}$. Let $\SBim(W, \fh)$ be the Karoubi envelope of $\SBim_\BS(W, \fh)$, which we always identify with a strictly full subcategory of $R\lh\gmod\lh R$:
\begin{equation} \label{eq:sbim}
 \SBim_\BS(W, \fh) \subset \SBim(W, \fh) \subset R\lh\gmod\lh R.
\end{equation}
Each of these categories is monoidal under $\otimes_R$ with unit object $R$. The objects of $\SBim(W, \fh)$ are called \emph{Soergel bimodules}. Soergel~\cite{Soe07} proved the following classification of the indecomposable objects.
\begin{prop} \label{prop:sbim-bs-characterization}
 For each $w \in W$, there is an object $B_w \in \SBim(W, \fh)$, characterized up to isomorphism by the following property: for any reduced expression $\uw$ of $w$, $B_w$ is the unique indecomposable direct summand of $B_\uw$ that does not occur as a direct summand of $B_\ux$ for any expression $\ux$ with $\ell(\ux) < \ell(\uw)$.
 
 The set $\{ B_w \}_{w \in W}$ is a complete list of isomorphism classes of indecomposable Soergel bimodules up to shift. Every object of $\SBim(W, \fh)$ is isomorphic to a finite direct sum of shifts of various $B_w$, and such a decomposition is unique in the obvious sense.
\end{prop}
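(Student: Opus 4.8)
The statement is Soergel's classification theorem, and the plan is to prove it by passing to the split Grothendieck group and matching everything against the Kazhdan--Lusztig basis. First I would record that $\SBim(W, \fh)$ is a Krull--Schmidt category: each Bott--Samelson bimodule is free of finite rank as a left $R$-module and concentrated in degrees $\ge -\ell(\uw)$, so for any two Soergel bimodules $M, N$ the graded space $\Hom(M, N)$ is bounded below and finite-dimensional in each degree; in particular $\End(M)_0$ is a finite-dimensional $\bk$-algebra. Since idempotents split in the Karoubi envelope $\SBim(W, \fh)$, every object is a finite direct sum of objects with local degree-zero endomorphism ring, uniquely up to reordering and isomorphism. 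This already yields the uniqueness-of-decomposition clause, once we know the indecomposables.

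Next I would set up the character map. Introduce the standard bimodules $R_w$ ($w \in W$) and prove that every Bott--Samelson bimodule admits a finite filtration with subquotients shifts $R_w\{n\}$, the graded multiplicities being independent of the filtration---they can be computed after base change to the fraction field of $R$, where reflection faithfulness keeps the $R_w$ pairwise non-isomorphic. This yields a well-defined additive map $\ch$ from the split Grothendieck group of $\SBim(W, \fh)$ to $\cH_W$, with $\{1\}$ acting by multiplication by $v$ and $\ch(R) = H_e$. The key computation is that $- \otimes_R B_s$ sends a standard filtration to a standard filtration, with effect on multiplicities equal to right multiplication by $\uH_s = H_s + vH_e$; iterating, $\ch(B_\uw) = \uH_\uw = \uH_{s_1} \cdots \uH_{s_k}$. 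I would also note the \emph{support bound}: for any expression $\ux$, a shift $R_x\{n\}$ occurs in a standard filtration of $B_\ux$ only if $\ell(x) \le \ell(\ux)$, and if $\ux$ is reduced only if $x \le w$, the element of $\ux$.

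Now induct on $\ell(w)$, with $B_e := R$. For $\ell(w) = k > 0$, fix a reduced expression $\uw = \uw' s$; by induction $B_{\uw'} \cong B_{w'} \oplus (\text{shifts of } B_z, \, z < w')$ and $\ch(B_z) \in \uH_z + \sum_{y < z} \Z[v^{\pm1}] \uH_y$ for every $z$ of smaller length. Decompose $B_\uw$ into indecomposables and apply $\ch$: comparing with $\ch(B_\uw) = \uH_\uw = \uH_w + \sum_{x < w} m_x(v) \uH_x$, using the unitriangularity of the $\ch(B_z)$ with respect to the Kazhdan--Lusztig basis together with the support bound (every summand of $B_\uw$ has underlying element $\le w$), the fact that $\uH_w$ appears with coefficient $1$ forces exactly one indecomposable summand with underlying element $w$ and zero shift; call it $B_w$. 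The support bound shows every \emph{other} indecomposable summand of $B_\uw$ has underlying element $z < w$ and hence occurs in a strictly shorter Bott--Samelson bimodule, so $B_w$ is the unique summand with the asserted property; the same bound shows $B_w$ does not occur in any $B_\ux$ with $\ell(\ux) < k$, nor in $B_{\uw'}$, whence $\ch(B_w) \in \uH_w + \sum_{y < w} \Z[v^{\pm1}] \uH_y$, which closes the induction. Completeness of $\{B_w\}$ and uniqueness of decompositions then follow, since every Soergel bimodule is a summand of a finite direct sum of shifts of Bott--Samelson bimodules. For independence of the reduced expression and canonicity: any indecomposable summand of any $B_{\uw''}$ for a reduced expression $\uw''$ of $w$ with the characterizing property has $\ch \equiv \uH_w$ modulo lower terms; I would use the Hom-space control from the previous step to conclude $\End(B_w)_0 = \bk$, so $B_w$ is absolutely indecomposable and any two realizations are isomorphic via a scalar-unique isomorphism, pinned down canonically by its action on the one-dimensional lowest-degree piece (in degree $-\ell(w)$).

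The main obstacle is the second step: showing that $- \otimes_R B_s$ preserves standard filtrations with the stated numerical effect, and that the multiplicities are well-defined because the $R_w$ become separated after inverting the roots. This is exactly where reflection faithfulness and a localization argument on the spectrum of $R$ are needed, and it is also the source of the Hom-space estimates (Soergel's Hom formula) behind the canonicity claim.
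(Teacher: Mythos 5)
The paper itself contains no proof of this proposition: it is imported wholesale from Soergel \cite{Soe-sbim}, so the only comparison available is with Soergel's original argument, and your outline is essentially that argument --- Krull--Schmidt from degreewise-finite, bounded-below graded Homs; standard filtrations by twisted bimodules $R_x\{n\}$ with multiplicities computed after localization; the computation that $(-)\otimes_R B_s$ acts on characters by right multiplication by $\uH_s$; induction on length; and the Hom formula feeding the endomorphism-ring statements. As a route this is the correct one, and the Krull--Schmidt part and the character formalism are set up soundly.

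There is, however, one step that does not follow as written. In the inductive step you isolate the unique indecomposable summand of $B_\uw$ whose standard filtration contains $R_w$ (coefficient of $H_w$ equal to $1$), and then assert that every \emph{other} indecomposable summand ``has underlying element $z<w$ and hence occurs in a strictly shorter Bott--Samelson bimodule.'' That ``hence'' is not a formal consequence of the support bound: knowing that a summand's standard subquotients only involve $R_z$ with $z<w$ does not by itself exhibit that summand inside some $B_\ux$ with $\ell(\ux)<\ell(w)$ --- that is precisely part of the classification being proved. The usual repair is to strengthen the induction hypothesis to ``every indecomposable summand of every $B_\ux$ with $\ell(\ux)<k$ is a shift of an already-constructed $B_z$ with $\ell(z)\le\ell(\ux)$,'' write $B_\uw\cong (B_{w'}\otimes_R B_s)\oplus\bigoplus (B_z\otimes_R B_s)\{n\}$ with $\ell(z)<\ell(w')$ so that the second block is covered by the hypothesis, and then treat the summands of $B_{w'}\otimes_R B_s$ by a separate support argument (Soergel's dichotomy: either the ``new'' element $w$ appears in the support, which happens for exactly one summand, or the summand is supported on $\{x: x<w\}$ and a further induction identifies it with a shifted $B_z$); note $B_{w'}\otimes_R B_s$ is not itself a summand of a shorter Bott--Samelson bimodule, so your stated induction does not reach it. Relatedly, the canonicity and independence-of-reduced-expression claims at the end rest on knowing $\End(B_w)$ is local with degree-zero part $\bk$, which requires Soergel's Hom formula (Proposition~\ref{prop:sbim-equivariant-formality} and \cite[Theorem~5.15]{Soe-sbim}); you correctly flag this as the main obstacle, but it is a substantial input rather than a consequence of the filtration set-up alone.
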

The split Grothendieck group $[\SBim(W, \fh)]$ of $\SBim(W, \fh)$ is a $\Z[v, v^{-1}]$-algebra under $v[M] = [M\{1\}]$ and $[M][N] = [M \otimes_R N]$. Soergel's categorification theorem \cite{Soe07} states that the assignment $\uH_s \mapsto [B_s]$ for $s \in S$ determines a $\Z[v, v^{-1}]$-algebra isomorphism $\cH_W \simto [\SBim(W, \fh)]$. Let $\ch: [\SBim(W,\fh)] \simto \cH_W$ denote the inverse isomorphism.
\begin{defn} \label{defn:soergel-conjecture}
 We say that the realization $(W, \fh)$ satisfies \emph{Soergel's conjecture}\footnote{Soergel only conjectured this result for specific realizations in characteristic 0.} if $\ch([B_w]) = \uH_w$ for all $w \in W$.
\end{defn}
This is a Soergel-theoretic analogue of the Kazhdan--Lusztig conjecture.

Let $\gmod\lh R$ denote the category of graded right $R$-modules and graded $R$-module homomorphisms, and let $\{1\}$ denote as before the grading shift down. Let $\fm \subset R$ denote the augmentation ideal, i.e.~the graded ideal of $R$ generated by $\fh^*$. Consider the functor
\[
 R\lh\gmod\lh R \rightarrow \gmod\lh R: M \mapsto \bk \otimes_R M,
\]
where $\bk := R/\fm$. That is, $\bk \otimes_R M$ is obtained from $M$ by killing the image of positive degree elements of $R$ acting on the left. Now, define the categories
\[
 \oSBim_\BS(W, \fh) \subset \oSBim(W, \fh) \subset \gmod\lh R
\]
to be the essential images of the categories in~\eqref{eq:sbim} under the functor above.

The objects of $\oSBim(W, \fh)$ are called (right) \emph{Soergel modules}. The modules $\oB_w := \bk \otimes_R B_w$ remain indecomposable (as follows from Proposition~\ref{prop:sbim-equivariant-formality} below) and pairwise distinct. Thus $\oB_w$ is again characterized as the ``largest'' direct summand of a \emph{Bott--Samelson module} $\oB_\uw := \bk \otimes_R B_\uw$, and we have a classification theorem entirely analogous to the case of bimodules. We similarly define Bott--Samelson and Soergel modules in $R\lh\gmod$ by reducing the right $R$-action.

We can now define the graded algebra $A_{W, \fh}$ from the introduction.
\begin{defn} \label{defn:A}
 Let $A_{W,\fh}$ be the graded endomorphism algebra
 \[
  A_{W, \fh} := \bigoplus_{n \in \Z} \Hom_{\gmod\lh R}(\bk \otimes_R B, \bk \otimes_R B\{n\}),
 \]
 where $B := \bigoplus_{w \in W} B_w \in \SBim(W, \fh)$.
\end{defn}

\subsection{Relation to parity sheaves}
\label{ss:relation-to-parity}
Parity complexes were introduced by Juteau--Mautner--Williamson \cite{JMW}. In this subsection, we recall some well-known results (essentially due to Soergel \cite{Soe90, Soe00}) relating parity complexes on flag varieties and Soergel (bi)modules. Since we never actually use parity complexes, we will be brief; their importance for us is purely as motivation.

Let $G$ be a connected complex reductive group. Choose a Borel subgroup $B$ containing a maximal torus $T$. Then $B$ act by left multiplication on the flag variety $G/B$; the $B$-orbits are the Schubert cells $X_w$, $w \in W$, where $W$ is the Weyl group. Let $\bk$ be a field, and let $\Db_B(G/B, \bk)$ (resp.~$\Db_{(B)}(G/B, \bk)$) denote the $B$-equivariant (resp.~$B$-constructible) derived category of sheaves of $\bk$-vector spaces on $G/B$. Denote by $\{1\}$ the cohomological shift (usually denoted by $[1]$).

Let $\Parity_B(G/B, \bk) \subset \Db_B(G/B, \bk)$ be the full additive subcategory of $B$-equi\-variant parity complexes. This category is stable under $\{1\}$, and indecomposable objects up to shift and isomorphism are indexed by $W$. More specifically, for $w \in W$, the indecomposable parity complex $\cE_w$, called \emph{parity sheaf}, is characterized by having support $\overline{X_w}$ and the normalization $\cE_w|_{X_w} \cong \underline{\bk}_{X_w}\{\ell(w)\}$.

The category $\Db_B(G/B, \bk)$ is monoidal under $B$-convolution $\ast$ and unit object $\cE_e = \delta$, the skyscraper at the point stratum $X_e$, and $\Parity_B(G/B, \bk)$ is a monoidal subcategory. For any expression $\uw = s_1 \cdots s_k$, define the \emph{Bott--Samelson parity complex}
\[
 \cE_\uw := \cE_{s_1} \ast \cdots \ast \cE_{s_k}.
\]
(For the empty expression, $\cE_\varnothing = \cE_e = \delta$.) Then $\cE_w$ also admits a ``Bott--Samelson-type'' characterization: for any reduced expression $\uw$ of $w$, it is the unique direct summand of $\cE_\uw$ that does not appear as a direct summand of $\cE_\ux$ for any expression $\ux$ with $\ell(\ux) < \ell(\uw)$.

Let $\Parity^\BS_B(G/B, \bk)$ be the smallest strictly full subcategory of $\Db_B(G/B, \bk)$ containing all Bott--Samelson complexes and closed under $\oplus$ and $\{n\}$. By the preceding discussion, its Karoubi envelope can be identified with $\Parity_B(G/B, \bk)$. Thus we have categories
\begin{equation} \label{eq:parity}
 \Parity^\BS_B(G/B, \bk) \subset \Parity_B(G/B, \bk) \subset \Db_B(G/B, \bk),
\end{equation}
each monoidal under $\ast$ with unit object $\delta$.

Let
\[
 \Parity^\BS_{(B)}(G/B, \bk) \subset \Parity_{(B)}(G/B, \bk) \subset \Db_{(B)}(G/B, \bk)
\]
be the essential images of the categories in~\eqref{eq:parity} under the forgetful functor
\[
 \For\colon \Db_B(G/B, \bk) \to \Db_{(B)}(G/B, \bk).
\]
Then $\Parity_{(B)}(G/B, \bk)$ agrees with the category of $B$-constructible parity complexes on $G/B$. Each $\cE_w := \For(\cE_w)$ remains indecomposable and again admits two characterizations: by a support condition and a normalization, and as the ``largest'' direct summand of the Bott--Samelson parity complex $\cE_\uw := \For(\cE_\uw)$. We also have a classification theorem entirely analogous to the $B$-equivariant case.

The connection to Soergel (bi)modules is as follows. Let $(W, \fh)$ be the Cartan realization over $\bk$ associated to $(G, B, T)$ (see Example~\ref{ex:cartan-realization}). Via the Borel isomorphism $\bH_B^\bullet(\pt, \bk) \cong R = \Sym_\bk(\fh^*)$, total hypercohomology can be viewed as functors
\[
 \bH_B^\bullet\colon \Db_B(G/B, \bk) \to R\lh\gmod\lh R, \qquad \bH^\bullet\colon \Db_{(B)}(G/B, \bk) \to \gmod\lh R
\]
intertwining $\{1\}$. If the characteristic of $\bk$ is good for $G$ and moreover not equal to 2, then it can be deduced from~\cite[\S4]{AR-I} that these functors restrict to equivalences
\[
 (\Parity_B(G/B, \bk), \ast) \simto (\SBim(W, \fh), \otimes_R), \qquad \Parity_{(B)}(G/B, \bk) \simto \oSBim(W, \fh)
\]
sending $\cE_\uw \mapsto B_\uw$, $\cE_w \mapsto B_w$ and $\cE_\uw \mapsto \oB_\uw$, $\cE_w \mapsto \oB_w$, and both intertwining $\{1\}$. These equivalences intertwine $\For$ with $\bk \otimes_R (-)$. Moreover, the first equivalence is monoidal and makes the second equivalence into an equivalence of right module categories.

\subsection{Geometric notation for Soergel (bi)modules}
\label{ss:sbim-notation}
Let $(W, \fh)$ be a reflection faithful realization (over a field $\bk$). Our point of view, motivated by the discussion in~\S\ref{ss:relation-to-parity}, is that a general $(W, \fh)$ should still be thought of as arising from a triple $(G, B, T)$, which however may not actually exist (e.g.~ if $W$ is not crystallographic). Soergel and Bott--Samelson bimodules (resp.~modules) are then $B$-equivariant (resp.~$B$-constructible) parity complexes and Bott--Samelson parity complexes, with $\bk$-coefficients, on the possibly non-existent flag variety $G/B$.

Accordingly, having fixed the realization $(W,\fh)$, we denote the category of Soergel bimodules $\SBim(W, \fh)$ by
\[
 \Parity(\BGB, \bk) \subset R\lh\gmod\lh R,
\]
and the categories of right and left Soergel modules by
\[
 \Parity(\UGB, \bk) \subset \gmod\lh R, \qquad \Parity(\BGU, \bk) \subset R\lh\gmod.
\]
We will often omit the coefficients $\bk$ from the notation. We use similar geometric notation for Bott--Samelson (bi)modules. Moreover, we sometimes write $\ast$ instead of $\otimes_R$ and speak of forgetful functors
\begin{align*}
 \For = (-)\otimes_R \bk &\colon \Parity(\BGB, \bk) \to \Parity(\BGU, \bk), \\
 \For = \bk \otimes_R (-) &\colon \Parity(\BGB, \bk) \to \Parity(\UGB, \bk).
\end{align*}
We also sometimes write $\cE_w$ to mean any of
\[
 B_w \in \Parity(\BGB), \quad \bk \otimes_R B_w \in \Parity(\UGB), \quad B_w \otimes_R \bk \in \Parity(\BGU),
\]
and write $\delta$ for the ``skyscraper'' $\cE_e$.

\begin{rmk}
 We stress that $\BGB$, $\UGB$, $\BGU$ are purely notational device used to emphasize the analogy with geometry. However, the constructions introduced in \S\ref{sec:monodromy}--\S\ref{sec:wall-crossing} of this paper also apply to actual parity complexes in place of Soergel (bi)modules. In that case, the proof of the main theorem in \S\ref{sec:koszul-duality} gives a new proof of the modular Koszul duality of \cite[Theorem~5.4]{AR-II}.
\end{rmk}

\subsection{Background on the mixed modular derived category}
\label{ss:mixed-modular}
In this subsection, we recall the results of~\cite{Mak-moment} adapting the formalism of~\cite{AR-II} to the setting of Soergel (bi)modules.

Let $(W, \fh)$ be a reflection faithful realization. We define the associated \emph{equivariant} and \emph{constructible mixed derived category} by
\[
 \Dmix(\BGB) := \Kb\Parity(\BGB), \qquad \Dmix(\UGB) := \Kb\Parity(\UGB).
\]
Each category has an induced internal grading shift $\{1\}$ and a new cohomological shift $[1]$. Define the \emph{Tate twist} $\la 1 \ra = [1]\{-1\}$. The forgetful functor induces an exact (i.e.~triangulated) functor
\[
 \For\colon \Dmix(\BGB) \to \Dmix(\UGB).
\]
We think of these categories as the $B$-equivariant and $B$-constructible mixed derived categories of a possibly non-existent $G/B$.

The Braden--MacPherson \cite{BMP} and Fiebig \cite{Fie-verma, Fie-coxeter} theory of moment graph sheaves allows us to take this point of view more seriously: it provides a notion of ``strata'' and ``support,'' so that indecomposable Soergel bimodules $\cE_w$ (in the guise of so-called Braden--MacPherson sheaves) may be characterized by a support condition and a normalization analogous to those for parity sheaves. In~\cite{Mak-moment}, we used this theory to define a recollement structure on $\Dmix(\BGB)$ and $\Dmix(\UGB)$, allowing one to speak of the ``standard'' and ``costandard'' sheaves
\[
 \Delta_w := ``i_{w!}\ubk_{X_w}\{\ell(w)\}," \quad \nabla_w := ``i_{w*}\ubk_{X_w}\{\ell(w)\}"
\]
for each $w \in W$, where $i_w$ is the ``inclusion of the Schubert cell $i_w\colon X_w \hookrightarrow G/B$.'' We also defined a ``perverse'' t-structure with hearts\footnote{In~\cite{Mak-moment}, these categories were denoted by $\Pmix(\cB) \subset \Dmix(\cB)$ and $\Pmix_c(\cB) \subset \Dmix_c(\cB)$, where $\cB$ is the Bruhat moment graph associated to $(W, \fh)$.}
\[
 \Pmix(\BGB) \subset \Dmix(\BGB), \qquad \Pmix(\UGB) \subset \Dmix(\UGB),
\]
consisting of \emph{mixed perverse sheaves}, each stable under Tate twist and having simple objects $\{ \IC_w \}_{w \in W}$ up to Tate twist and isomorphism.

One of the main results in~\cite{Mak-moment} is that the pair $(\Pmix(\UGB), \la1\ra)$ has the natural structure of a graded highest weight category\footnote{The notion of a graded highest weight category is as in \cite[Definition~A.1]{AR-II} (who instead use the term ``graded quasihereditary''), except that we do not require the index set to be finite.} indexed by $(W, \le)$ with standard (resp.~costandard) objects $\Delta_w$ (resp.~$\nabla_w$). As in any graded highest weight category, one may then speak of the full additive subcategory
\[
 \Tmix(\UGB) \subset \Pmix(\UGB),
\]
stable under Tate twist, of \emph{tilting objects}. The indecomposable tilting objects are $\{ \cT_w \}_{w \in W}$ up to Tate twist and isomorphism, where $\cT_w$ is characterized by a support condition and a normalization (see \cite[Proposition~A.4]{AR-II}).

When the index set is finite, graded highest weight categories have enough projectives, and enough projectives in the additional presence of a duality functor. Thus for finite $W$, $\Pmix(\UGB)$ contains the usual collection of objects
\[
 \Delta_w, \quad \nabla_w, \quad \IC_w, \quad \cP_w, \quad \cI_w, \quad \cT_w \quad \text{ for } w \in W,
\]
where $\cP_w$ (resp.~$\cI_w$) denotes the projective cover (resp.~injective hull) of $\IC_w$. The category $\Pmix(\UGB)$ is our Soergel-theoretic analogue of $\cO_0^\gr$. However, in general the parity objects $\cE_w$, viewed as a complex supported in cohomological degree 0, need not be perverse.

Consider the objects
\[
 \cP := \bigoplus_{w \in W} \cP_w, \qquad \cE := \bigoplus_{w \in W} \cE_w
\]
in $\Dmix(\UGB)$, and define the graded algebras
\begin{align*}
 A^\proj_{W, \fh} := \left(\bigoplus_{n \in \Z} \Hom(\cP, \cP\la n \ra)\right)^\opp, \qquad A^\parity_{W, \fh} := \bigoplus_{n \in \Z} \Hom(\cE, \cE\{n\}) = A_{W, \fh}.
\end{align*}
Note that $A^\proj_{W, \fh}$ is the graded algebra controlling $\Pmix(\UGB)$, while $A_{W, \fh}$ agrees with the graded algebra from Definition~\ref{defn:A}. The following result was proved in~\cite{Mak-moment} as a consequence of the fact that Soergel's conjecture implies the isomorphism $\cE_w \cong \IC_w$ for all $w \in W$.
\begin{prop} \label{prop:soergel-koszul}
 If $W$ is finite and $(W, \fh)$ satisfies Soergel's conjecture, then $A^\proj_{W, \fh}$ and $A^\parity_{W, \fh}$ are Koszul, and Koszul dual to each other.
\end{prop}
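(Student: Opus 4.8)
The plan is to follow the strategy of \cite{BGS}: once Soergel's conjecture is used to make the parity sheaves simple perverse, both the Koszulity and the identification of the Koszul dual reduce to a short formal argument about morphism spaces in $\Dmix(\UGB)$. First I would invoke \cite{Mak}: the hypothesis implies $\cE_w \cong \IC_w$ for every $w \in W$. Writing $\IC := \bigoplus_{w \in W} \IC_w$, we then have $\cE \cong \IC$, so $\cE$ is the direct sum of the simple objects of $\Pmix(\UGB)$ (one for each Tate-twist orbit) and $A^\parity_{W,\fh} = \bigoplus_n \Hom_{\Dmix(\UGB)}(\cE, \cE\{n\})$ is the $\Hom$-algebra of the simples. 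Since $W$ is finite, $\Pmix(\UGB)$ is a finite graded highest weight category, the projective generator $\cP$ and hence $A^\proj_{W,\fh}$ are defined, and $\Pmix(\UGB) \simeq A^\proj_{W,\fh}\lh\gmod$, with $\cP_w$ and $\IC_w$ corresponding to the indecomposable projective and the simple, and with $\la 1 \ra$ corresponding to the grading shift; in particular $E(A^\proj_{W,\fh})$ is, in cohomological degree $n$, the group $\bigoplus_m \Ext^n_{\Pmix(\UGB)}(\IC, \IC\la m \ra)$.

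Next I would record two structural inputs. (a) Using the weight structure of the mixed formalism (purity of $\IC_w$, weight bounds on $\Delta_w$ and $\nabla_w$), one checks, exactly as in \cite[\S3]{BGS}, that $A^\proj_{W,\fh}$ is a positively graded, finite-dimensional algebra with semisimple degree-zero part, so that the notion of Koszulity applies to it. (b) The realization functor $\Db(\Pmix(\UGB)) \to \Dmix(\UGB) = \Kb\Parity(\UGB)$ is an equivalence (part of the formalism of \cite{Mak}, after \cite{AR-II}), so that $\Ext^n_{\Pmix(\UGB)}(\IC_v, \IC_w\la m\ra) = \Hom_{\Dmix(\UGB)}(\IC_v, \IC_w[n]\la m\ra)$. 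By the Ext criterion of \cite{BGS} it then suffices to show this group vanishes unless $n + m = 0$. Using $\IC_w \cong \cE_w$ and the identity $\la 1 \ra = [1]\{-1\}$, which gives $[n]\la m\ra = [n+m]\{-m\}$, one has
\[
 \Hom_{\Dmix(\UGB)}(\IC_v, \IC_w[n]\la m\ra) = \Hom_{\Dmix(\UGB)}(\cE_v, \cE_w[n+m]\{-m\});
\]
since $\cE_v, \cE_w$ lie in cohomological degree $0$ inside $\Kb\Parity(\UGB)$, this is $0$ for $n+m \neq 0$ and equals $\Hom_{\Parity(\UGB)}(\cE_v, \cE_w\{n\})$ for $n+m = 0$. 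This proves $A^\proj_{W,\fh}$ is Koszul.

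The same computation identifies the Koszul dual. By the vanishing just established, only $m = -n$ contributes to $\bigoplus_m \Ext^n_{\Pmix(\UGB)}(\IC, \IC\la m\ra)$, so this reduces to $\Hom_{\Dmix(\UGB)}(\IC, \IC[n]\la -n\ra) = \Hom_{\Dmix(\UGB)}(\cE, \cE\{n\})$, the degree-$n$ part of $A^\parity_{W,\fh}$. Checking that the Yoneda product corresponds to composition of morphisms in $\Dmix(\UGB)$ upgrades this to an isomorphism of graded algebras $E(A^\proj_{W,\fh}) \cong A^\parity_{W,\fh}$. Finally, since Koszul duality is an involution on the class of Koszul algebras \cite{BGS}, $A^\parity_{W,\fh} = E(A^\proj_{W,\fh})$ is itself Koszul, with $E(A^\parity_{W,\fh}) \cong A^\proj_{W,\fh}$; this is the assertion of the proposition.

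The genuinely deep step is the first one: the implication ``Soergel's conjecture $\Rightarrow \cE_w \cong \IC_w$'', which is where Kazhdan--Lusztig-type positivity enters and which is the substance of \cite{Mak}. The remaining obstacle is to supply the structural inputs (a) and (b): the ``mixedness'' of $\Pmix(\UGB)$ --- positivity of the grading on $A^\proj_{W,\fh}$, semisimplicity of its degree-zero part, and the purity and weight estimates for $\IC_w$, $\Delta_w$, $\nabla_w$ --- together with the fact that $\Dmix(\UGB)$ is the bounded derived category of its perverse heart. These are Soergel-theoretic analogues of results of \cite{AR-II} (classically of \cite{BGS}), available in the framework of \cite{Mak}; granted them, the chain above is purely formal.
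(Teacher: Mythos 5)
The paper gives no argument of its own for this proposition: it cites \cite{Mak}, where the result is deduced precisely from the implication ``Soergel's conjecture $\Rightarrow \cE_w \cong \IC_w$'' that you take as your starting point. Your formal computation in $\Dmix(\UGB) = \Kb\Parity(\UGB)$ (Ext-vanishing unless $n+m=0$, identification of the Yoneda algebra of the simples with $A^\parity_{W,\fh}$) is the standard way this deduction is carried out, and the structural inputs you flag --- positivity of $A^\proj_{W,\fh}$ with semisimple degree-zero part and the realization equivalence $\Db\Pmix(\UGB) \simeq \Dmix(\UGB)$ --- are indeed available in the framework of \cite{Mak} following \cite{AR-II}, so your proposal is correct and follows essentially the same route as the cited proof.
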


Given a realization $(\fh, \{\alpha_s^\vee\}, \{\alpha_s\})$, we have the \emph{dual realization} $(\fh^*, \{\alpha_s\}, \{\alpha_s^\vee\})$. We say that $\fh$ is \emph{self-dual} if it is isomorphic to $\fh^*$ as a realization in the obvious sense. Assume that $\fh^*$ is also reflection faithful. Let $R^\vee = \Sym_\bk(\fh)$, graded with $\deg \fh = 2$. Again motivated by geometry, we view the realization $(W, \fh^*)$ as arising from the triple $(G^\vee, B^\vee, T^\vee)$ Langlands dual to $(G, B, T)$. We again use the geometric notation of \S\ref{ss:sbim-notation} for Soergel-theoretic notions asociated to $(W,\fh^*)$. For instance,
\[
 \Parity(\BGUvee) \subset R^\vee\lh\gmod
\]
denotes the category of left Soergel modules associated to $(W, \fh^*)$, with indecomposable objects $\cE_w^\vee$ ``generated'' by the endofunctors
\[
 \theta_s := B_s^\vee \otimes_{R^\vee} (-)\colon R^\vee\lh\gmod \to R^\vee\lh\gmod, \quad B_s^\vee := (R^\vee) \otimes_{(R^\vee)^s} R^\vee \{1\}, \quad s \in S
\]
from $\cE_e^\vee$. Repeating the constructions of \cite{Mak-moment} but with left instead of and right Soergel modules, we obtain in particular the categories
\[
 \Tmix(\BGUvee) \subset \Pmix(\BGUvee) \subset \Dmix(\BGUvee)
\]
and may speak of the objects $\Delta_w^\vee$, $\nabla_w^\vee$, $\cT_w^\vee$ for $w \in W$.

\subsection{Statements}
\label{ss:main-results-statements}
Let $(W, S)$ be a finite Coxeter system, $\bk$ a field of characteristic not equal to 2, and $\fh$ a realization of $(W, S)$ over $\bk$. Assume that both $\fh$ and $\fh^*$ are reflection faithful, so that all categories of the preceding subsections are defined.

Our main result, to be proved in~\S\ref{ss:main-result-proof}, is a Soergel-theoretic analogue of modular Koszul duality~\cite[Theorem~5.4]{AR-II}.
\begin{thm} \label{thm:mkd}
 There exists a triangulated equivalence
 \[
  \kappa\colon \Dmix(\UGB, \bk) \simto \Dmix(\BGUvee, \bk)
 \]
 satisfying $\kappa \circ [1] \cong [1] \circ \kappa$, $\kappa \circ \la1\ra \cong \{1\} \circ \kappa$, $\kappa \circ \{1\} \cong \la1\ra \circ \kappa$, and
 \[
  \kappa(\Delta_w) \cong \Delta^\vee_w, \quad \kappa(\nabla_w) \cong \nabla^\vee_w, \quad \kappa(\cT_w) \cong \cE^\vee_w, \quad \kappa(\cE_w) \cong \cT^\vee_w.
 \]
\end{thm}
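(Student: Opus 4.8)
The plan is to imitate the strategy of Bezrukavnikov--Yun~\cite{BY}, realizing $\kappa$ as a composition of two functors through an intermediate ``Soergel-module'' category. First I would construct, for each side, a functor to a derived category of (d)g-modules. On the source side, the monodromy action of~\S\ref{sec:monodromy} equips objects of $\Dmix(\UGB)$ with an action of $\fh^*$ (viewed as the ``monodromy Lie coalgebra''), and $\Hom$-complexes into the projective generator $\cP$ then naturally carry a module structure over the algebra $A^\proj_{W,\fh}$ together with this extra symmetry. On the target side, the wall-crossing functors $\theta_s$ of~\S\ref{sec:wall-crossing} act on $\Dmix(\BGUvee)$, and the indecomposable parity objects $\cE^\vee_w$ assemble into $\cE^\vee$ whose endomorphism algebra is $A^\parity_{W,\fh} = A_{W,\fh}$ by definition. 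The key formal input is Proposition~\ref{prop:soergel-koszul}'s underlying mechanism, namely that $A^\proj_{W,\fh}$ and $A^\parity_{W,\fh}$ are interchanged by a Koszul-type duality functor $\mathbf{E}$ between dg-derived categories; but crucially we want this \emph{without} assuming Soergel's conjecture, so we must produce the equivalence directly at the level of the geometric categories.

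Concretely, the steps I would carry out are: (1) Define $\kappa$ on the additive subcategories by sending the tilting generator to the parity generator. The tilting objects $\cT_w \in \Pmix(\UGB)$ form a generating family of the bounded homotopy/derived category, and one shows $\Dmix(\UGB) \simeq \Kb(\Tmix(\UGB))$; dually, $\Dmix(\BGUvee) = \Kb\Parity(\BGUvee)$ by definition. So it suffices to produce an equivalence of additive categories $\Tmix(\UGB) \simeq \Parity(\BGUvee)$ compatible with the extra structure. (2) Identify the relevant graded endomorphism algebras. Using the monodromy action, show $\bigoplus_n \Hom_{\Dmix(\UGB)}(\cT, \cT\langle n\rangle)$ (with $\cT = \bigoplus_w \cT_w$), together with its $\fh^*$-monodromy, is isomorphic as a graded algebra-with-extra-data to $\bigoplus_n \Hom(\cE^\vee, \cE^\vee\{n\})$ equipped with the wall-crossing/$R^\vee$-bimodule structure. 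This is where the dual realization enters: the monodromy on the $\Dmix(\UGB)$-side lives in $\fh^*$, which is exactly the Cartan datum $R^\vee = \Sym(\fh)$ controlling the Langlands-dual side. (3) Transport standard, costandard, and the two distinguished families of objects: $\cT_w$ on the source matches $\cE^\vee_w$ on the target (by construction of the equivalence on generators), while $\Delta_w, \nabla_w$ match $\Delta^\vee_w, \nabla^\vee_w$ because both are characterized (via the recollement of~\cite{Mak}) by a support condition and normalization preserved by the matching of Cartan data, and $\cE_w \mapsto \cT^\vee_w$ then follows by the same argument applied to the quasi-inverse $\kappa^{-1}$. The shift compatibilities $\kappa\langle 1\rangle \cong \{1\}\kappa$, $\kappa\{1\}\cong\langle1\rangle\kappa$ are built into the identification in step (2): the internal grading $\{1\}$ on one side comes from the cohomological-plus-Tate combination on the other, since $\langle1\rangle = [1]\{-1\}$.

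The main obstacle is step (2): proving that the endomorphism algebra of the tilting generator, \emph{with its monodromy action}, is isomorphic to the endomorphism algebra of $\cE^\vee$ over $R^\vee$. This requires showing that the monodromy action is ``free enough'' — that the tilting objects are, roughly, free modules over the monodromy (Koszul dual of $R^\vee$), so that taking monodromy-invariants or -coinvariants recovers precisely the $R^\vee$-bimodule structure of Soergel modules on the dual side. This is the analogue of the computation that projective objects in graded category $\mathcal{O}$ become, after Koszul duality, the (co)standard or parity objects on the other side, and it is precisely the content that in~\cite{AR-II} required the full force of the wall-crossing machinery. One must check the monodromy action is compatible with the $\theta_s$ (wall-crossing intertwines the two), verify the base case for a single $s \in S$ by an explicit $\mathrm{SL}_2$-type computation, and then propagate along Bott--Samelson expressions using that $\cT$ is a summand of iterated wall-crossings applied to $\Delta_e$. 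A secondary subtlety is that $\kappa$ must be an \emph{exact} functor of triangulated categories, not merely an additive equivalence of hearts: since we realize it via $\Kb$ of additive categories, exactness is automatic, but one must still check that the t-exactness properties implicit in $\kappa(\Delta_w)\cong\Delta^\vee_w$ etc.\ are consistent — i.e.\ that $\kappa$ is a ``Koszulity-type'' equivalence sending the perverse t-structure to a non-standard one, which is exactly why no algebraic Koszul ring need be present.
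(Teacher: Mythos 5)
Your overall architecture is the one the paper follows (and the one announced in \S\ref{ss:proof-structure}): reduce to an additive equivalence $\Tmix(\UGB) \simeq \Parity(\BGUvee)$, pass to $\Kb$ using $\Dmix(\UGB) \simeq \Kb\Tmix(\UGB)$, and then identify the images of the four families of objects, with $\kappa(\cE_w) \cong \cT^\vee_w$ handled last by a duality trick. But the step you yourself flag as the main obstacle --- your step (2) --- is exactly where the content lies, and the mechanism you sketch for it is not the one that works and is left essentially unproven. The paper does not compare endomorphism algebras of the tilting generator ``with monodromy'' against $\END(\cE^\vee)$, nor does it take monodromy-invariants or -coinvariants, nor does it prove any freeness of tilting objects over the monodromy. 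Instead it defines a concrete functor $\bV = \mu_{\cP_e}^\ast \circ \bigoplus_n \Hom(\cP_e, (-)\la n\ra)$, where the $R^\vee$-module structure comes from the monodromy of the single object $\cP_e$ (the projective cover of the skyscraper), proves $\theta_s \circ \bV \cong \bV \circ \xi_s$ by constructing a natural transformation from a canonical map $\can_s\colon \cP_e \to \xi_s\cP_e\la-1\ra$ and checking it is an isomorphism on the simple objects $\IC_w$ (using the subregular categories $\Dmix(\UGPt)$ for $w \neq e$ and a rank-one computation at $\delta$), and then establishes that $\bV$ restricts to an equivalence $\Tmix(\UGB) \simeq \Parity(\BGUvee)$ by combining (a) faithfulness, via the Beilinson--Bezrukavnikov--Mirkovi\'c-style argument resting on Lemma~\ref{lem:delta-in-standard-and-costandard}, with (b) equality of graded Hom dimensions between Bott--Samelson objects, both sides computing the Hecke pairing $\la \uH_\ux, \uH_\uy\ra$ (Soergel's Hom formula~\eqref{eq:Soergel-module-hom-formula} on the parity side, and the tilting analogue~\eqref{eq:tilting-hom-formula}, which itself requires the graded highest weight structure and the computation $\ch_\Delta(\xi_s\cF) = \uH_s\ch_\Delta(\cF)$). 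None of these ingredients --- the choice of $\cP_e$ rather than the full projective generator, the intertwining checked on simples, the faithfulness-plus-dimension-count --- appear in your plan, and your proposed substitute (showing tiltings are ``free over the monodromy'' and recovering the $R^\vee$-structure by (co)invariants) is not a statement that makes sense in this non-free-monodromic setting; equivariant formality here only gives the monodromy as a central action on $\Kb\sC_{(T)}$, not a module structure one can take invariants of.

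Two smaller points. Your determination of $\kappa(\Delta_w)$ and $\kappa(\nabla_w)$ ``by support condition and normalization'' is vaguer than what is needed; in the paper these are deduced from $\kappa(\cT_w) \cong \cE^\vee_w$ by the argument of \cite[Lemma~5.2]{AR-II}, using the (co)standard filtrations of tilting objects. More importantly, your claim that $\kappa(\cE_w) \cong \cT^\vee_w$ ``follows by the same argument applied to the quasi-inverse $\kappa^{-1}$'' does not work as stated: nothing is known a priori about $\kappa^{-1}$ on $\cT^\vee_w$ (what is known is $\kappa^{-1}(\cE^\vee_w) \cong \cT_w$). The paper instead invokes the independently constructed dual functor $\kappa^\vee$ for $(W,\fh^*)$ and bootstraps: applying $\kappa\circ\kappa^\vee$ to the standard and costandard filtrations of $\cT^\vee_x$ shows $\kappa(\cE_w)$ is perverse, then tilting, then indecomposable with the correct support and normalization, which characterizes $\cT^\vee_w$.
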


We note the following immediate consequences. First, we obtain the following equivalence by composing $\kappa$ with the Ringel self-duality of $\Dmix(\UGB, \bk)$ (proved in~\cite{Mak-moment} by imitating~\cite[Proposition~4.11]{AR-II}).
\begin{cor} \label{cor:koszul-duality}
 There exists a triangulated equivalence
 \[
  \kappa'\colon \Dmix(\UGB, \bk) \simto \Dmix(\BGUvee, \bk)
 \]
 satisfying $\kappa' \circ [1] \cong [1] \circ \kappa'$, $\kappa' \circ \la1\ra \cong \{1\} \circ \kappa'$, $\kappa' \circ \{1\} \cong \la1\ra \circ \kappa'$, and $\kappa'(\cP_{ww_0}) \cong \cE_w$.
\end{cor}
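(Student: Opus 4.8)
The plan is to combine the equivalence $\kappa$ of Theorem~\ref{thm:mkd} with a Ringel-type self-duality of the mixed modular derived category of $\UGB$. Recall from \cite{Mak} (imitating \cite[Proposition~4.11]{AR-II}) that there is a triangulated self-equivalence
\[
 \rho\colon \Dmix(\UGB, \bk) \simto \Dmix(\UGB, \bk)
\]
commuting (up to isomorphism) with $[1]$ and the Tate twist $\la1\ra$, and exchanging standard with costandard objects and projective with tilting objects according to the Ringel duality combinatorics: one has $\rho(\Delta_w) \cong \nabla_{ww_0}$ (up to twist), $\rho(\nabla_w) \cong \Delta_{ww_0}$, and crucially $\rho(\cP_{ww_0}) \cong \cT_w$, where $w_0$ is the longest element of $W$. (The precise form and any normalizing twists should be extracted from the statement in \cite{Mak}; the key point is only that Ringel duality sends the indecomposable projective $\cP_{ww_0}$ to the indecomposable tilting $\cT_w$.)

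With this in hand, I set $\kappa' := \kappa \circ \rho$. Since both $\kappa$ and $\rho$ commute with $[1]$, so does $\kappa'$. For the internal and cohomological shifts: $\rho$ commutes with $\la1\ra$, while $\kappa$ intertwines $\la1\ra$ with $\{1\}$ and $\{1\}$ with $\la1\ra$; hence $\kappa'$ satisfies $\kappa' \circ \la1\ra \cong \{1\} \circ \kappa'$ and $\kappa' \circ \{1\} \cong \la1\ra \circ \kappa'$, exactly as required. Finally, on objects,
\[
 \kappa'(\cP_{ww_0}) = \kappa(\rho(\cP_{ww_0})) \cong \kappa(\cT_w) \cong \cE_w^\vee,
\]
using $\kappa(\cT_w) \cong \cE_w^\vee$ from Theorem~\ref{thm:mkd}. (In the statement of the corollary this appears as $\kappa'(\cP_{ww_0}) \cong \cE_w$ because of the notational convention of \S\ref{ss:sbim-notation}, whereby $\cE_w$ denotes any of the indecomposable parity objects; here the target lives in $\Parity(\BGUvee)$, so it is $\cE_w^\vee$.) This is precisely the asserted formula, so the corollary follows.

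The only real content is bookkeeping: one must make sure the normalizations (Tate twists, the indexing shift by $w_0$) in the Ringel self-duality of \cite{Mak} are compatible with those in Theorem~\ref{thm:mkd}, so that the composite lands the indecomposable projectives on the indecomposable parity sheaves on the nose rather than up to an unspecified twist. I expect this to be the main (though minor) obstacle; it is resolved by tracking the support/normalization conditions characterizing $\cP_w$, $\cT_w$, and $\cE_w$ through each functor, exactly as in the proof of the analogous classical statement. Everything else is formal.
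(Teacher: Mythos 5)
Your proposal is correct and is exactly the paper's argument: the corollary is obtained by composing $\kappa$ from Theorem~\ref{thm:mkd} with the Ringel self-duality of $\Dmix(\UGB,\bk)$ proved in \cite{Mak} (imitating \cite[Proposition~4.11]{AR-II}), which sends $\cP_{ww_0}$ to $\cT_w$, and then invoking $\kappa(\cT_w)\cong\cE_w^\vee$. Your bookkeeping of the shift functors and of the notational point about $\cE_w$ versus $\cE_w^\vee$ is also consistent with the paper.
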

From this equivalence and Proposition~\ref{prop:soergel-koszul}, we deduce the following statement about graded algebras.
\begin{thm} \label{thm:mkd-algebra}
 Suppose that $W$ is finite and $(W, \fh)$ satisfies Soergel's conjecture. Then $A^\proj_{W, \fh}$ is Koszul, and $E(A^\proj_{W, \fh}) \cong A^\proj_{W, \fh^*}$. In particular, if $\fh$ is self-dual, then $A^\proj_{W, \fh}$ is Koszul self-dual.
\end{thm}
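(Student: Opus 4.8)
The plan is to reduce the algebraic statement entirely to the structural equivalence of Corollary~\ref{cor:koszul-duality} together with the Koszulity result of Proposition~\ref{prop:soergel-koszul}. First, recall that under Soergel's conjecture, Proposition~\ref{prop:soergel-koszul} asserts that both $A^\proj_{W, \fh}$ and $A^\parity_{W, \fh}$ are Koszul, and are Koszul dual to each other; that is, $E(A^\proj_{W, \fh}) \cong A^\parity_{W, \fh}$, using the standard fact that the Koszul dual of a Koszul ring is again Koszul and that $E(E(B)) \cong B$ for a Koszul ring $B$. So the first step is simply to invoke this and note that it immediately gives the Koszulity of $A^\proj_{W, \fh}$, reducing the theorem to identifying $A^\parity_{W, \fh}$ with $A^\proj_{W, \fh^*}$ as graded algebras.

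Next I would use Corollary~\ref{cor:koszul-duality}: the equivalence $\kappa'\colon \Dmix(\UGB, \bk) \simto \Dmix(\BGUvee, \bk)$ sends $\cP_{ww_0} \mapsto \cE_w$ (on the dual side, parity objects) and intertwines $\{1\}$ with $\la1\ra$. Applying $\kappa'$ to the object $\cP = \bigoplus_{w \in W} \cP_w = \bigoplus_{w \in W}\cP_{ww_0}$ (reindexing by $w \mapsto w w_0$, which permutes $W$), we obtain an isomorphism $\kappa'(\cP) \cong \bigoplus_{w \in W} \cE^\vee_w = \cE^\vee$, the analogous parity object for $(W, \fh^*)$ in $\Dmix(\BGUvee)$. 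Since $\kappa'$ is an equivalence of triangulated categories intertwining the shift functors, it induces a graded ring isomorphism on the graded endomorphism algebras, sending $\la n\ra$-twists to $\{n\}$-shifts. Concretely,
\[
 \bigl(A^\proj_{W, \fh}\bigr)^\opp = \bigoplus_n \Hom(\cP, \cP\la n\ra) \;\cong\; \bigoplus_n \Hom(\kappa'(\cP), \kappa'(\cP)\{n\}) = \bigoplus_n \Hom(\cE^\vee, \cE^\vee\{n\}),
\]
and the right-hand side is exactly $(A^\parity_{W, \fh^*})^\opp = (A_{W, \fh^*})^\opp$ by definition. Combining with the previous step gives $E(A^\proj_{W, \fh}) \cong A^\parity_{W, \fh} \cong (A^\proj_{W, \fh^*})$, where the last isomorphism is again from Proposition~\ref{prop:soergel-koszul} applied to the dual realization $(W, \fh^*)$. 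The final sentence of the theorem follows: when $\fh \cong \fh^*$ as realizations, the categories for $(W, \fh)$ and $(W, \fh^*)$ coincide, so $E(A^\proj_{W, \fh}) \cong A^\proj_{W, \fh}$, i.e.\ $A^\proj_{W, \fh}$ is Koszul self-dual.

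The main obstacle I anticipate is bookkeeping rather than conceptual: one must check that $(W, \fh^*)$ itself satisfies the hypotheses needed for Proposition~\ref{prop:soergel-koszul} on the dual side — in particular that $\fh^*$ is reflection faithful (assumed in~\S\ref{ss:main-results-statements}) and that Soergel's conjecture holds for $(W, \fh^*)$. The latter is not literally part of the hypotheses as stated, so I would either add it as an explicit assumption, or remark that Soergel's conjecture for a realization is equivalent to that for its dual (the Hecke-algebraic statement $[B_w] = \uH_w$ is insensitive to dualizing, since the categorification isomorphism and the Kazhdan--Lusztig basis are the same). A second, more minor point is to verify carefully that the equivalence $\kappa'$ of Corollary~\ref{cor:koszul-duality} really does intertwine the internal and cohomological grading shifts in the way needed to get a \emph{graded} (not merely ungraded) ring isomorphism with the correct matching of $\la1\ra$ and $\{1\}$; this is recorded in the statement of the corollary, so it is a matter of tracing through the identifications rather than proving anything new.
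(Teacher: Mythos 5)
Your overall strategy is the same as the paper's (the paper deduces the theorem in one line from Corollary~\ref{cor:koszul-duality} and Proposition~\ref{prop:soergel-koszul}), but the specific chain you write has a genuine gap at the step $A^\parity_{W,\fh} \cong A^\proj_{W,\fh^*}$, which you justify by ``Proposition~\ref{prop:soergel-koszul} applied to the dual realization $(W,\fh^*)$.'' That application requires Soergel's conjecture for $(W,\fh^*)$, which is not among the hypotheses, and your proposed patch --- that Soergel's conjecture for $(W,\fh)$ is equivalent to Soergel's conjecture for $(W,\fh^*)$ --- is not proved in the paper and is not obvious: $\SBim(W,\fh)$ and $\SBim(W,\fh^*)$ are genuinely different categories, and under the paper's own duality (Theorem~\ref{thm:mkd}) the parity objects for $\fh$ correspond to \emph{tilting}, not parity, objects for $\fh^*$, so no direct transfer of the conjecture is available. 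As you note, adding the dual conjecture as a hypothesis would weaken the theorem as stated.

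The repair stays entirely within the paper's toolkit and avoids any input about $\fh^*$ beyond reflection faithfulness: use Proposition~\ref{prop:soergel-koszul} only for $(W,\fh)$, which gives Koszulity of $A^\proj_{W,\fh}$ and $E(A^\proj_{W,\fh}) \cong A^\parity_{W,\fh}$; then apply Corollary~\ref{cor:koszul-duality} with the roles of $\fh$ and $\fh^*$ exchanged (its hypotheses --- $W$ finite, both $\fh$ and $\fh^*$ reflection faithful --- are symmetric and involve no Soergel conjecture). That equivalence sends the projectives on the $\fh^*$-side to the parity objects on the $\fh$-side and intertwines $\la1\ra$ with $\{1\}$, hence yields a graded algebra isomorphism $A^\proj_{W,\fh^*} \cong A^\parity_{W,\fh}$; combining the two displays gives $E(A^\proj_{W,\fh}) \cong A^\proj_{W,\fh^*}$ under exactly the stated hypothesis. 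Two smaller bookkeeping points: by the paper's definitions $\bigoplus_n \Hom(\cE^\vee, \cE^\vee\{n\})$ is $A^\parity_{W,\fh^*}$ itself (no $\opp$ is taken in the definition of $A^\parity$), so your identification of the right-hand side as $(A^\parity_{W,\fh^*})^\opp$ is off by an opposite; and the target of $\kappa'$ is the left-module side $\Dmix(\BGUvee)$, so one must also match this with the side used in defining $A^\proj_{W,\fh^*}$ and $A^\parity_{W,\fh^*}$ --- harmless, but it should be tracked along with the $\opp$'s.
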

The geometric representation $\fh_\geom$ over $\mathbb{R}$ is self-dual. Moreover, Soergel's conjecture for $\fh_\geom$ is a theorem for arbitrary Coxeter systems due to Elias--Williamson~\cite{EW-hodge}. As a result, we obtain a uniform, purely algebraic proof of~Theorem~\ref{thm:ksd-geom}, the Koszul self-duality of $A_{W, \fh_\geom}$ for all finite $W$.

\subsection{Structure of the proof}
\label{ss:proof-structure}
Our proof of Theorem~\ref{thm:mkd} follows an established pattern; we imitate in particular the proof of the ``self-duality'' in the work of Bezrukavnikov--Yun \cite{BY}. In short, the goal is to invent functors $\xi_s$ for $s \in S$ and $\bV$ as in the diagram
\[
 \begin{tikzcd}
  \Dmix(\UGB) \ar[loop, dashed, out=120, in=60, distance=15, "\xi_s"] \ar[r, dashed, "\bV"] & R^\vee\lh\gmod \ar[loop, out=120, in=60, distance=15, "\theta_s"]
 \end{tikzcd}
\]
satisfying the following properties: (1) $\{\xi_s\}_{s \in S}$ ``generate'' $\Tmix(\UGB)$ from the smallest tilting object $\cT_e = \delta$ in the same way that the endofunctors $\{\theta_s\}_{s \in S}$ ``generate'' $\Parity(\BGUvee)$ from the smallest parity object $\cE_e^\vee = \bk$; (2) $\bV(\cT_e) \cong \cE_e^\vee$; (3) $\bV \circ \xi_s \cong \theta_s \circ \bV$ for all $s \in S$. The monodromy action in $\Dmix(\UGB)$, constructed in~\S\ref{sec:monodromy}, will play a key role in defining both $\xi_s$ and $\bV$.

\section{Preliminaries}
\label{sec:preliminaries}
In this section, let $\bk$ be a commutative ring. By ``grading,'' we always mean a $\Z$-grading.

\subsection{Graded modules and graded categories}
\label{ss:graded-categories}
A \emph{graded $\bk$-linear additive category} will mean for us a pair $(\sC, \{1\})$ consisting of a $\bk$-linear additive category $\sC$ and an autoequivalence $\{1\}$, called \emph{(grading) shift}. For $M, N \in \sC$, define the \emph{graded Hom}
\[
 \HOM_\sC(M, N) := \bigoplus_{n \in \Z} \Hom_\sC(M, N\{n\}),
\]
a graded $\bk$-module. For $L, M, N \in \sC$, the obvious induced composition
\[
 (-) \circ (-)\colon \HOM_\sC(M, N) \times \HOM_\sC(L, M) \to \HOM_\sC(L, N),
\]
is graded $\bk$-bilinear. The notation $f\colon M \to N$ will be reserved for an actual morphism of $\sC$. Thus $f\colon M \to N\{n\}$ denotes an element $f \in \HOM_\sC(M, N)$ of degree $n$.

Let $A$ be a graded $\bk$-algebra. We call a graded $\bk$-linear additive category $(\sC, \{1\})$ \emph{graded $A$-linear} if its graded Homs are equipped with the structure of a graded $A$-module in such a way that composition is graded $A$-bilinear.

\begin{ex}
 Let $A\lh\gmod$ denote the category of graded $A$-modules and graded (degree 0) $A$-module homomorphisms. For a graded $A$-module $M = \bigoplus_{i \in \Z} M_i$, define its grading shift $M\{n\}$ by $M\{n\}_i = M_{i+n}$. Then $(A\lh\gmod, \{1\})$ is a graded $\bk$-linear additive category. If $A$ is moreover commutative, then $(A\lh\gmod, \{1\})$ is graded $A$-linear.
\end{ex}

An additive functor between graded $\bk$-linear (resp.~graded $A$-linear) categories is called \emph{graded $\bk$-linear} (resp.~\emph{graded $A$-linear}) if it intertwines the shifts on the nose and the induced maps of graded Hom are graded $\bk$-linear (resp.~$A$-linear).

\subsection{Further background on Soergel (bi)modules and mixed derived category}
\label{ss:further-background}
Let $(W, \fh)$ be a reflection faithful realization.

\subsubsection{A little Soergel diagrammatics} \label{sss:soergel-diagrammatics}
Given $M, N \in R\lh\gmod\lh R$, we write\\$\Hom(M, N)$ for the space of degree 0 $R$-bimodule homomorphisms, and
\[
 \HOM(M, N) := \bigoplus_{ n \in \Z} \Hom(M, N\{n\}) \in R\lh\gmod\lh R
\]
for the graded Hom.

For each $s \in S$, define the following graded $R$-bimodule homomorphisms:
\begin{align*}
 \BsR &\colon B_s \to R\{1\}\colon f \otimes g \mapsto fg  \\
 \RBs &\colon R \to B_s\{1\}\colon f \mapsto f\left(\frac{\alpha_s}{2} \otimes 1 + 1 \otimes \frac{\alpha_s}{2}\right) \\
 \BsRBs := \RBs &\circ \BsR\colon B_s \to B_s\{2\}, \qquad \BsBs := \id_{B_s}\colon B_s \to B_s
\end{align*}
Each diagram is ``$s$-colored,'' but we will always have a fixed $s$ in mind.

These diagrams are borrowed from~\cite{EKh, Eli-dihedral, EW-soergel-calculus}, but we treat them as symbols like any other (rather than embedded graphs up to isotopy). However, the topology of the diagram reminds us whether the $R$-action can be moved from right to left: we have
\begin{gather}
 \label{eq:soergel-diagrammatics-isotopy}
  \BsR f = f \BsR,\quad \RBs f = f \RBs,\quad \BsRBs f = f \BsRBs \quad \text{ for all } f \in R, \\
 \label{eq:polynomial-forcing}
  \BsBs \lambda = s(\lambda) \BsBs + \BsRBs \langle \alpha_s^\vee , \lambda \rangle \in \Hom(B_s, B_s\{2\}) \quad \text{ for all } \lambda \in \fh^*.
\end{gather}

\subsubsection{Soergel Hom formula and equivariant formality} \label{sss:hom-formula-equivariant-formality}
On the Hecke algebra $\cH_W$ (see~\S\ref{ss:coxeter}), define the $\Z[v, v^{-1}]$-bilinear pairing
\[
 \la -, - \ra\colon \cH_W \times \cH_W \to \Z[v, v^{-1}]
\]
determined by $\la H_x, H_y \ra = \delta_{xy}$. The \emph{graded dimension} of a graded vector space $V = \bigoplus_{n \in \Z} V_n$ is
\[
 \gdim V := \sum_{n \in \Z} (\dim V_n)v^n \in \Z[v, v^{-1}].
\]

The following ``equivariant formality'' statement will play an important role throughout this paper.
\begin{prop} \label{prop:sbim-equivariant-formality}
 For any $M, N \in \Parity(\BGB)$, the graded Hom $\HOM(M, N)$ is graded free as a left $R$-module. If moreover $W$ is finite, then the natural map
 \[
  \bk \otimes_R \HOM_{R\lh\gmod\lh R}(M, N) \to \HOM_{\gmod\lh R}(\bk \otimes_R M, \bk \otimes_R N)
 \]
 induced by the functor $\bk \otimes_R (-)\colon R\lh\gmod\lh R \to \gmod\lh R$ is an isomorphism.
\end{prop}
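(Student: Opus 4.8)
\textit{The plan is to reduce both assertions to Bott--Samelson bimodules, prove the freeness via standard--costandard filtrations, and prove the base-change isomorphism by induction using the self-biadjointness of the functors $\theta_s$.} Since $\HOM(-,N)$ and $\HOM(M,-)$ carry direct sums, direct summands and grading shifts of Soergel bimodules to the corresponding operations on graded left $R$-modules, and the natural transformation of the second assertion respects all of these, it suffices to treat $M = B_\uw$ and $N = B_\uy$ for expressions $\uw,\uy$.

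For the first assertion I would use the theory of standard and costandard bimodules (due to Soergel; see also~\cite{EW-soergel-calculus}): $B_\uy$ admits a costandard filtration, a finite filtration by sub-bimodules with subquotients $\nabla_x\{k\}$ where $\nabla_x$ is $R$ with the right action twisted by $x \in W$, and dually $B_\uw$ admits a standard filtration with subquotients $\Delta_x\{j\}$. One checks directly that $\HOM_{R\lh\gmod\lh R}(\Delta_x\{j\},\nabla_y\{k\})$ is zero for $x \neq y$ and graded free of rank one over $R$ for $x = y$ --- a nonzero bimodule map $\Delta_x \to \nabla_y$ would force $yx^{-1}$ to act trivially on $\fh^*$, which is impossible for $x\neq y$ because $\fh$, hence $\fh^*$, is a faithful $W$-module --- and that $\Ext^1_{R\lh\gmod\lh R}(\Delta_x\{j\},\nabla_y\{k\}) = 0$. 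Feeding these two filtrations into $\HOM(-,-)$, the Ext-vanishing shows $\HOM(B_\uw, B_\uy)$ inherits a finite filtration whose subquotients are the various $\HOM(\Delta_x\{j\},\nabla_y\{k\})$, each zero or graded free of rank one as a left $R$-module; since $\Ext^1_R(R\{m\},-) = 0$ the filtration splits over $R$ on the left, whence $\HOM(B_\uw, B_\uy)$ is graded free as a left $R$-module. The symmetric argument gives freeness as a right $R$-module; in particular $\mathrm{grk}_R\HOM(B_\uw, B_\uy)$ is a Laurent polynomial.

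For the second assertion, assume $|W| < \infty$. For $s \in S$ the endofunctor $(-)\otimes_R B_s = (-)\otimes_{R^s}R\{1\}$ of $\gmod\lh R$ is biadjoint to itself up to a shift, since $R$ is a graded Frobenius extension of $R^s$ --- free of rank two with nondegenerate trace, which is where $\tfrac12 \in \bk$ enters, the unit and counit being induced by $\RBs$ and $\BsR$; the same biadjunction holds for $(-)\otimes_R B_s$ on $R\lh\gmod\lh R$, and $\bk\otimes_R(-)$ intertwines the two functors together with their units and counits. Peeling the tensor factors of $B_\uw$ off one at a time and transferring them to the other side, the base-change map $\bk\otimes_R\HOM(M,N) \to \HOM(\bk\otimes_R M, \bk\otimes_R N)$ is therefore identified, up to a shift, with the analogous map for $M = R$ and $N = B_{\underline{z}}$, where $\underline{z}$ is $\uy$ followed by the reverse of $\uw$. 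For $M = R$ the source is $\bk\otimes_R Z(B_{\underline{z}})$, where $Z(B_{\underline{z}}) = \HOM_{R\lh\gmod\lh R}(R, B_{\underline{z}})$ is the graded centre of $B_{\underline{z}}$, and the target is the largest subspace of $\bk\otimes_R B_{\underline{z}}$ on which $R_{>0}$ acts by zero. One finishes by showing this last map is an isomorphism, using the standard filtration of $B_{\underline{z}}$ and a graded-rank count: by the Soergel Hom formula for the Soergel modules $\bk\otimes_R B_{\underline{z}}$, both sides have graded dimension equal to the multiplicity of $\uH_e$ in $\ch B_{\underline{z}}$, and finiteness of $|W|$ guarantees these are finite.

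I expect the crux to be this last step --- establishing the Hom formula for the Soergel modules $\bk\otimes_R B_{\underline{z}}$ (equivalently, that every morphism of Soergel modules between Bott--Samelson modules is a $\bk$-linear combination of morphisms induced from bimodule morphisms), together with the routine but somewhat delicate check that the base-change map is genuinely compatible with the adjunction isomorphisms used in the reduction. Note that the freeness established in the first assertion already pins down the graded dimension of the source of the base-change map, so that a dimension count completes the argument once surjectivity (equivalently, the Hom formula for modules) is in hand.
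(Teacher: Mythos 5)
The first thing to note is that the paper does not prove this proposition at all: both halves are quoted from Soergel --- the freeness is part of~\cite[Theorem~5.15]{Soe-sbim}, and the base-change isomorphism is~\cite[Theorem~2]{Soe-hcbim} for the geometric representation of a finite Weyl group and~\cite{Soe} for general reflection faithful realizations --- so a blind proof would have to reprove two substantial theorems, and yours does not. Concretely, in the first half your key intermediate claim, the vanishing of $\Ext^1$ in graded $R$-bimodules between a standard and a costandard bimodule, is false: for $x \neq y$ these are the twisted bimodules $R_x$, $R_y$, i.e.\ structure rings of the graphs of $x$ and $y$ in $\fh \times \fh$, and a Koszul-complex computation shows $\Ext^1(R_x, R_y) \neq 0$ whenever $xy^{-1}$ is a reflection (already for $W = S_2$, $R = \bk[\alpha]$, one gets $\Ext^1_{\bk[a,b]}\bigl(\bk[a,b]/(a-b), \bk[a,b]/(a+b)\bigr) \cong \bk$). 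So the exactness of the induced filtration on $\HOM(B_\uw, B_\uy)$ does not follow for the reason you give; Soergel's proof does use the $\Delta$/$\nabla$-flag formalism, but it replaces this vanishing by more delicate support and codimension arguments.

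The second half has the more serious gap, and you flag it yourself: your concluding dimension count invokes the Soergel Hom formula for Soergel \emph{modules}, i.e.~\eqref{eq:Soergel-module-hom-formula}, but in the paper that formula is \emph{deduced from} the proposition you are proving, and your parenthetical reformulation (``every morphism between Bott--Samelson modules is induced from bimodule morphisms'') is precisely the surjectivity of the base-change map. As written the argument is circular. The Frobenius-adjunction reduction to $M = R$, $N = B_{\underline{z}}$ is fine (the right-tensor adjunctions and their units/counits do descend along $\bk \otimes_R (-)$), but computing the graded dimension of $\HOM_{\gmod\lh R}(\bk, \bk \otimes_R B_{\underline{z}})$ --- the right-$\fm$-invariants of a Bott--Samelson module --- is the entire content of the theorem; Soergel's proof for finite Weyl groups goes through category $\cO$/geometry, and the general reflection faithful case is a separate, recent result. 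Note also that the finiteness of $W$ must enter structurally (the base-change statement does not hold for general infinite $W$, even though all graded dimensions in sight are finite), whereas your sketch uses it only to ensure finiteness of dimensions.
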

\begin{proof}
 The first statement is part of~\cite[Theorem~5.15]{Soe07}. Soergel originally proved the second statement for the geometric representation of finite Weyl groups \cite[Theorem~2]{Soe92}, and more recently for reflection faithful realizations (see \cite[Proposition~1.13]{Ric}).
\end{proof}
We will also need the Soergel Hom formula~\cite[Theorem~5.15]{Soe07}: given expressions $\ux, \uy$, we have
\[
 \gdim (\bk \otimes_R \HOM(B_\ux, B_\uy)) = \la \uH_\ux, \uH_\uy \ra.
\]
For finite $W$, we deduce by Proposition~\ref{prop:sbim-equivariant-formality} that
\begin{equation} \label{eq:Soergel-module-hom-formula}
 \gdim \HOM_{\gmod\lh R}(\bk \otimes B_\ux, \bk \otimes B_\uy) = \la \uH_\ux, \uH_\uy \ra.
\end{equation}

\subsubsection{Singular Soergel bimodules} \label{sss:ssbim}
The entire story in~\S\ref{ss:mixed-modular} generalizes to singular Soergel theory, or at least to the subregular case. Namely, for every $s \in S$, there are categories
\begin{align*}
 \Parity(\BGPs) &= \oSBim^s(W, \fh) \subset R\lh\gmod\lh R^s, \\
 \Parity(\UGPs) &= \SBim^s(W, \fh) \subset \gmod\lh R^s,
\end{align*}
of singular Soergel (bi)modules~\cite{Wil-ssbim}, with indecomposable objects indexed by the coset space $W/\{ e, s\}$. The geometric notation here is motivated by the fact that, when $(W, \fh)$ arises from a complex reductive group $G$, these categories are related to parity complexes on minimal partial flag varieties $G/P^s$ in the same way as in~\S\ref{ss:relation-to-parity}.

As in the regular case, one can use moment graphs to define the categories
\[
 \Pmix(\UGPs) \subset \Dmix(\UGPs), \quad \Pmix(\BGPs) \subset \Dmix(\BGPs).
\]
Note that $\Dmix(\BGPs)$ is a left module category for $\Dmix(\BGB)$ via $\ast$. There are also exact functors
\[
 \pi^{s*}\colon \Dmix(\BGPs) \to \Dmix(\BGB), \quad \pi^s_*\colon \Dmix(\BGB) \to \Dmix(\BGPs),
\]
and their constructible versions. We noted in~\cite{Mak-moment} that these constructions satisfy various properties one expects from geometry, with mostly the same proofs as in~\cite{AR-II}.

\subsection{Sign convention in homological algebra}
\label{ss:sign-convention}
This technical subsection can safely be skipped on a first reading.

Let $\cA$ be an additive category. Denote by $\Chb\cA$ (resp.~$\Kb\cA$) the category of bounded complexes in $\cA$ (resp.~bounded homotopy category). The usual convention in homological algebra defines a shift functor $\Sigma_\ell$ (usually denoted by $[1]$) introducing a sign in the differential, then defines a triangulated structure on the category with shift $(\Kb\cA, \Sigma_\ell)$.

For certain computations in~\S\ref{sec:monodromy}--\ref{sec:wall-crossing}, it will be more convenient to use a different shift $\Sigma_r$ introducing no sign; the cone will also receive a different sign. Proposition~\ref{prop:shift-triangulated-structure-sign-convention} below assures that this is an inessential choice of convention.

We first recall the usual triangulated structure on $(\Kb\cA, \Sigma_\ell)$, being careful to note the dependence on $\Sigma_\ell$. The shift $\Sigma_\ell$ on $\Chb\cA$ and $\Kb\cA$ is defined by
\[
 (\Sigma_\ell A)^i = A^{i+1}, \quad d_{\Sigma_\ell A}^i = -d_A^{i+1}.
\]
Given a map of complexes $f\colon A \to B$, one defines the \emph{left cone} $C_\ell(f)$ to be the complex
\[
 C_\ell(f)^i = A^{i+1} \oplus B^i, \quad d_{C_\ell(f)}^i = \begin{bmatrix} -d_A^{i+1} & \\ f^{i+1} & d_B^i \end{bmatrix}.
\]
One also associates to $f$ the \emph{left standard triangle}
\[
 S_\ell(f)\colon A \xrightarrow{f} B \xrightarrow{\alpha_\ell(f)} C_\ell(f) \xrightarrow{\beta_\ell(f)} \Sigma_\ell A
\]
in $(\Chb\cA, \Sigma_\ell)$, where $\alpha_\ell(f)$ and $\beta_\ell(f)$ are inclusion and projection. A triangle in $(\Kb\cA, \Sigma_\ell)$ is \emph{left distinguished} if it is isomorphic to the image of some left standard triangle. One then shows that the collection of left distinguished triangles satisfies the axioms of distinguished triangles, hence defines a triangulated structure on $(\Kb\cA, \Sigma_\ell)$.

Define a new shift $\Sigma_r$ on $\Chb\cA$ and $\Kb\cA$ by
\[
 (\Sigma_r A)^i = A^{i+1}, \quad d_{\Sigma_r A}^i = d_A^{i+1}.
\]
For a map of complexes $f\colon A \to B$, define the \emph{right cone} $C_r(f)$ by
\[
 C_r(f)^i = A^{i+1} \oplus B^i, \quad d_{C_r(f)}^i = \begin{bmatrix} d_A^{i+1} & \\ (-1)^i f^{i+1} & d_B^i \end{bmatrix},
\]
and the \emph{right standard triangle}
\[
 S_r(f)\colon A \xrightarrow{f} B \xrightarrow{\alpha_r(f)} C_r(f) \xrightarrow{\beta_r(f)} \Sigma_r A,
\]
where $\alpha_r(f)$ and $\beta_r(f)$ are again inclusion and projection (involving no sign). A triangle in $(\Kb\cA, \Sigma_r)$ is called \emph{right distinguished} if it is isomorphic to the image of some right standard triangle.

\begin{prop} \label{prop:shift-triangulated-structure-sign-convention}
 The collection of right distinguished triangles defines a triangulated structure on $(\Kb\cA, \Sigma_r)$. Moreover, there is a natural isomorphism $\eta\colon \Sigma_\ell \to \Sigma_r$ such that the pair $(\id_{\Kb\cA}, \eta)$ defines a triangulated equivalence $(\Kb\cA, \Sigma_\ell) \cong (\Kb\cA, \Sigma_r)$.
\end{prop}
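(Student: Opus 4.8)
The plan is to verify directly that the collection of right distinguished triangles satisfies the Verdier axioms, but to do so cheaply by transporting everything from the left-handed story via an explicit isomorphism of shift functors. First I would define, for each complex $A$, the map $\eta_A\colon \Sigma_\ell A \to \Sigma_r A$ whose component in degree $i$ is multiplication by $(-1)^i$ on $A^{i+1}$; a one-line check shows this commutes with the differentials ($d_{\Sigma_\ell A}^i = -d_A^{i+1}$ versus $d_{\Sigma_r A}^i = d_A^{i+1}$, and the sign is absorbed by the $(-1)^i$ vs.\ $(-1)^{i+1}$ discrepancy), so $\eta\colon \Sigma_\ell \Rightarrow \Sigma_r$ is a natural isomorphism of functors on $\Chb\cA$, hence also on $\Kb\cA$.

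Next I would exhibit, for a map of complexes $f\colon A\to B$, an isomorphism of complexes $\gamma_f\colon C_\ell(f)\simto C_r(f)$ that is the identity on the $B^i$ summand and is multiplication by $(-1)^{i}$ (or $(-1)^{i+1}$ — I would fix the sign so that the squares below commute) on the $A^{i+1}$ summand. One then checks the two requisite compatibilities: $\gamma_f\circ\alpha_\ell(f) = \alpha_r(f)$ (immediate, since $\alpha$ is inclusion of the $B$-summand and $\gamma_f$ is the identity there), and $\eta_A\circ\beta_\ell(f) = \beta_r(f)\circ\gamma_f$ (a sign bookkeeping on the $A$-summand). Consequently $\gamma_f$ carries the left standard triangle $S_\ell(f)$ to a triangle isomorphic, via $(\id_A,\id_B,\gamma_f,\eta_A)$, to the right standard triangle $S_r(f)$. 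Therefore a triangle in $(\Kb\cA,\Sigma_r)$ is right distinguished if and only if its preimage under $(\id,\eta)$ is left distinguished in $(\Kb\cA,\Sigma_\ell)$.

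Given this, the two claims follow formally. The axioms (TR1)–(TR4) for right distinguished triangles are pulled back from the known axioms for left distinguished triangles along the functor-plus-natural-transformation pair $(\id_{\Kb\cA},\eta)$: since $\eta$ is an isomorphism, rotation, completion of morphisms to triangles, and the octahedral axiom all transport verbatim. And the pair $(\id_{\Kb\cA},\eta)$ is by construction a triangulated equivalence $(\Kb\cA,\Sigma_\ell)\simto(\Kb\cA,\Sigma_r)$: it is the identity on objects and morphisms, $\eta$ intertwines the two shifts, and it sends distinguished triangles to distinguished triangles by the previous paragraph (with the inverse doing the same).

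The only place demanding care is the sign bookkeeping — getting the exponents in $\eta_A$ and in $\gamma_f$ mutually consistent, and checking that with these choices the differential of $C_r(f)$ (which carries the factor $(-1)^i$ on the off-diagonal entry) really is the one obtained by conjugating $d_{C_\ell(f)}$ by $\gamma_f$. This is entirely mechanical, but it is the step where an off-by-one error would silently break the argument, so I would write out the $2\times 2$ matrix conjugation explicitly once. Everything else is formal nonsense about transporting a triangulated structure across an equivalence.
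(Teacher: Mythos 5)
Your proposal is correct and follows essentially the same route as the paper: the paper defines $\eta_A^i = (-1)^i\id_{A^{i+1}}$ and $\gamma_f^i = \diag((-1)^i\id_{A^{i+1}}, \id_{B^i})$, checks the commutative diagram comparing $S_\ell(f)$ with $S_r(f)$, and transports the triangulated structure across $(\id_{\Kb\cA},\eta)$. The sign you left open in $\gamma_f$ does come out as $(-1)^i$ (matching $\eta_A$), and with that choice the matrix conjugation you propose to write out indeed works.
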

\begin{proof}
 For any complex $A$, there is an isomorphism of complexes $\eta_A\colon \Sigma_\ell A \to \Sigma_r A$ defined by
 \[
  \eta_A^i = (-1)^i = (-1)^i\id_{A^{i+1}}.
 \]
 This defines a natural isomorphism $\eta\colon \Sigma_\ell \to \Sigma_r$. For any map of complexes $f\colon A \to B$, there is an isomorphism of complexes $\gamma_f\colon C_\ell(f) \to C_r(f)$ defined by
 \[
  \gamma_f^i = \begin{bmatrix} (-1)^i & \\ & 1 \end{bmatrix} = \begin{bmatrix} (-1)^i \id_{A^{i+1}} & \\ & \id_{B^i} \end{bmatrix}.
 \]
 We have the following commutative diagram in $\Chb\cA$:
 \[
  \begin{tikzcd}
   A \ar[r, "f"] \ar[d, equal] & B \ar[r, "\alpha_\ell(f)"] \ar[d, equal] & C_\ell(f) \ar[r, "\beta_\ell(f)"] \ar[d, "\gamma_f"] & \Sigma_\ell A \ar[d, "\eta_A"] \\
   A \ar[r, "f"] & B \ar[r, "\alpha_r(f)"] & C_r(f) \ar[r, "\beta_r(f)"] & \Sigma_r A
  \end{tikzcd}
 \]
 It follows that the isomorphism $(\id_{\Kb\cA}, \eta)\colon (\Kb\cA, \Sigma_\ell) \to (\Kb\cA, \Sigma_r)$ of categories with shift identifies the collection of left distinguished triangles with the collection of right distinguished triangles. The result now follows since the former defines a triangulated structure on $(\Kb\cA, \Sigma_\ell)$.
\end{proof}

\begin{rmk} \label{rmk:sign-convention}
 Let $\cA = \bk\lh\ngmod$, the category of $\bk$-modules. In this case, the shift $\Sigma_\ell$ (resp.~$\Sigma_r$) can be identified with the endofunctor that tensors on the left (resp.~right) with the complex $\bk[1]$ concentrated in degree $-1$. (The differential of a tensor product of complexes is determined by the graded Leibniz rule, with the differential viewed as acting on the left.) This explains the left/right terminology.
\end{rmk}

We use the ``right'' sign convention throughout this paper. We write $A[1]$ for $\Sigma_r(A)$ and drop the subscript $r$ and the adjective ``right.''

\section{Monodromy action}
\label{sec:monodromy}
Let $(W, \fh)$ be a reflection faithful realization (as always, over a field $\bk$ of characteristic not equal to 2). In this section, we define the monodromy action in $\Dmix(\UGB)$.

\subsection{Idea}
\label{ss:monodromy-idea}
To motivate the homological algebra that follows, we first explain the rough idea of the construction.

Let us briefly recall the monodromy action in geometry. Let $G, B, T, U$ be as before, but now defined over a finite field $\mathbb{F}_q$. One can then consider $\Dbm(\BGB)$ (resp.~$\Dbm(\UGB)$), the derived category of $B$-equivariant (resp.~$B$-constructible) mixed $\Qlbar$-complexes on $G/B$ as in \cite{BBD}. These categories are equipped with an autoequivalence $\la1\ra$ called Tate twist.\footnote{More precisely, $\la1\ra := (-\frac12)$, where we fix a square root of $q$ in $\Qlbar$ to make sense of the half Tate twist $(\frac12)$.} Let
\[
 \For\colon \Dbm(\BGB) \to \Dbm(\UGB)
\]
be pullback under the natural projection $\pi: \UGB \to \BGB$. Let $X_*(T)$ be the group of cocharacters of $T$. As explained by Bezrukavnikov--Yun \cite[\S A.1]{BY}, the construction of \cite[\S5]{Ver} applied to the $T$-torsor $\pi$ produces a functorial \emph{(log of) monodromy} action of $\fh := \Qlbar \otimes_\Z X_*(T)$ on any $\cF \in \Dbm(\UGB)$: for any $X \in \fh$, there are morphisms $\mu_{\cF, X}\colon \cF \to \cF\la2\ra$ in $\Dbm(\UGB)$, functorial in $\cF$, such that $\cF$ admits a $B$-equivariant lift if and only if it has trivial monodromy:
\begin{equation} \label{eq:ladic-monodromy}
 \cF \text{ lies in the essential image of } \For \iff \mu_{\cF, X} = 0 \text{ for all } X \in \fh.
\end{equation}

Now, return to our Soergel-theoretic setup: let $(W, \fh)$ be a reflection faithful realization. Our goal is to produce an analogous action in $\Dmix(\UGB)$. An object of $\Dmix(\UGB)$ is a complex $(\cF, d_\cF)$ in $\Parity(\UGB)$:
\[
 \cdots \to \cF^i \xrightarrow{d_\cF^i} \cF^{i+1} \xrightarrow{d_\cF^{i+1}} \cF^{i+2} \to \cdots.
\]
Since $\For\colon \Parity(\BGB) \to \Parity(\UGB)$ is essentially surjective and full, we may (arbitrarily) lift each term and each differential to obtain a ``pre-complex'' $(\widetilde{\cF}, \widetilde{d_\cF})$ in $\Parity(\BGB)$:
\[
 \cdots \to \widetilde{\cF^i} \xrightarrow{\widetilde{d_\cF^i}} \widetilde{\cF^{i+1}} \xrightarrow{\widetilde{d_\cF^{i+1}}} \widetilde{\cF^{i+2}} \to \cdots.
\]
Note that $\widetilde{d_\cF^{i+1}} \circ \widetilde{d_\cF^i}$ may not be $0$, but by Proposition~\ref{prop:sbim-equivariant-formality} it lies in $\fm\HOM(\widetilde{\cF^i}, \widetilde{\cF^{i+2}})$, where $\fm$ denotes the augmentation ideal of $R$. We say that $\widetilde{\cF}$ is a ``pseudo complex'' in $\Parity(\BGB)$. Thus, any $\cF$ admits a pseudo complex lift $\widetilde{\cF}$, and
\begin{equation} \label{eq:new-monodromy}
 \widetilde{\cF} \in \Dmix(\BGB) \iff \widetilde{d_\cF^{i+1}} \circ \widetilde{d_\cF^i} = 0 \text{ for all } i \in \Z.
\end{equation}
Comparing~\eqref{eq:ladic-monodromy} with~\eqref{eq:new-monodromy} suggests that monodromy in $\Dmix(\UGB)$ should measure the failure of $\widetilde{d_\cF} \circ \widetilde{d_\cF}$ to vanish. We illustrate this in an example.
\begin{ex}
 Let $s \in S$. We noted in~\cite{Mak-moment} that the indecomposable tilting object $\cT_s \in \Dmix(\UGB)$ is the image of the three-term complex
 \[
  \cT_s = (\overline{R}\{-1\} \xrightarrow{\overline{\RBs}} \overline{B_s} \xrightarrow{\overline{\BsR}} \overline{R}\{1\}),
 \]
 where $(\cT_s)^0 = \overline{B_s}$ and $\overline{(-)} = \bk \otimes_R (-)$. Since $\BsR \circ \RBs = \alpha_s \id_R$ lies in $\fm\END(R)$, this really is a complex in $\Parity(\UGB)$.

 This complex does not admit a lift to any complex in $\Parity(\BGB)$. To compute the monodromy action, instead lift it to the pseudo complex
 \[
  \widetilde{\cT_s} = (R\{-1\} \xrightarrow{\RBs} B_s \xrightarrow{\BsR} R\{1\}),
 \]
 and consider the ``morphism of pseudo complexes'' $\widetilde{d_{\cT_s}} \circ \widetilde{d_{\cT_s}}\colon \widetilde{\cT_s} \to \widetilde{\cT_s}[2]$:
 \[
  \begin{tikzcd}
   R\{-1\} \ar[r] & B_s \ar[r] & R\{1\} \\
   && R\{-1\} \ar[u, "\alpha_s"] \ar[r] & B_s \ar[r] & R\{1\}.
  \end{tikzcd}
 \]
 Here, the rows depict $\widetilde{\cT_s}[2]$ and $\widetilde{\cT_s}$, and the vertical arrow is the component $(\widetilde{\cT_s})^0 \to (\widetilde{\cT_s}[2])^0$ of $\widetilde{d_{\cT_s}} \circ \widetilde{d_{\cT_s}}$. From this, there is a natural way to cook up a morphism $\cT_s \to \cT_s\la2\ra = \cT_s[2]\{-2\}$: ``dividing through'' by $\alpha_s$ decreases the internal (bimodule) degree by 2, and we get
 \[
  \begin{tikzcd}
   \overline{R}\{-3\} \ar[r] & \overline{B_s}\{-2\} \ar[r] & \overline{R}\{-1\} \\
   && \overline{R}\{-1\} \ar[u, "\id"] \ar[r] & \overline{B_s} \ar[r] & \overline{R}\{1\}.
  \end{tikzcd}
 \]
 This will be the monodromy morphism $\mu_{\cT_s, X}\colon \cT_s \to \cT_s\la2\ra$, where $X \in \fh$ is any element satisfying $\la X, \alpha_s \ra = 1$.
\end{ex}

\subsection{Statement}
\label{ss:monodromy-statement}
The following ad hoc definition will be useful for us.
\begin{defn} \label{defn:monodromic-triple}
 An \emph{$\fh$-monodromic triple} $(\sC_T, \sC_{(T)}, \For)$ consists of a graded $R$-linear category $(\sC_T, \{1\})$, a graded $\bk$-linear category $(\sC_{(T)}, \{1\})$, and a graded $\bk$-linear functor $\For\colon \sC_T \to \sC_{(T)}$, satisfying the following ``equivariant formality'' properties: for $\cF, \cG \in \sC_T$,
 \begin{enumerate}
  \item[\bf(EF1)] $\HOM_{\sC_T}(\cF, \cG)$ is a graded free $R$-module;
  \item[\bf(EF2)] the natural map
  \[
   \bk \otimes_R \HOM_{\sC_T}(\cF, \cG) \to \HOM_{\sC_{(T)}}(\For \cF, \For \cG)
  \]
  is an isomorphism.
 \end{enumerate}
\end{defn}

The bounded homotopy category $\Kb\sC_{(T)}$ has an induced shift $\{1\}$ and a new cohomological shift $[1]$. Define the \emph{Tate twist} $\la1\ra := [1]\{-1\}$. For a $\bk$-linear category with shift $(\mathsf{D}, \Sigma)$, its \emph{graded center} $Z(\mathsf{D}, \Sigma)$ is the graded $\bk$-algebra with degree $n$ part\footnote{Our results only involve the even degree elements of the graded center, so the sign $(-1)^n$ disappears.}
\[
 Z(\mathsf{D}, \Sigma)_n = \{ \alpha\colon \id_\mathsf{D} \to \Sigma^n \mid \alpha_\Sigma = (-1)^n\Sigma \alpha \}.
\]

The following is the main result of this section. Recall that $R^\vee = \Sym_\bk(\fh)$, graded with $\deg \fh = 2$. Let $\fm^\vee$ be its augmentation ideal.
\begin{prop} \label{prop:monodromy}
 Let $(\sC_T, \sC_{(T)}, \For)$ be an $\fh$-monodromic triple. There exists a graded $\bk$-algebra map
 \[
  \mu\colon R^\vee \to Z(\Kb\sC_{(T)}, \la1\ra)\colon f \mapsto \mu_{-, f},
 \]
 called the \emph{monodromy action}, with the following property. For any $\cF \in \Kb\sC_{(T)}$, denote by
 \[
  \mu_\cF\colon R^\vee \to \bigoplus_{n \in \Z} \Hom(\cF, \cF\la n \ra)\colon f \mapsto \mu_{\cF, f}
 \]
 the induced $\bk$-algebra map. If $\cF$ lies in the essential image of the induced functor $\For: \Kb\sC_T \to \Kb\sC_{(T)}$, then $\mu_\cF(\fm^\vee) = \{0\}$, or equivalently, $\mu_\cF(\fh) = \{0\}$.
\end{prop}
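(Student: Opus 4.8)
I would imitate the $\ell$-adic ``log of monodromy'' by a lift-and-divide recipe. Fix $\cF \in \Kb\sC_{(T)}$, regarded as a bounded complex $(\cF^\bullet, d_\cF)$ in $\sC_{(T)}$. Since $\For$ is full (by \textbf{(EF2)}) and essentially surjective, choose a \emph{pseudo-complex lift} $\widetilde\cF$: objects $\widetilde\cF^i \in \sC_T$ with $\For\widetilde\cF^i \cong \cF^i$, together with morphisms $\widetilde{d}^{\,i}$ lifting $d_\cF^i$. Then $\For(\widetilde{d}^{\,i+1}\widetilde{d}^{\,i}) = 0$, so by \textbf{(EF2)} the \emph{curvature} $\widetilde{d}^{\,i+1}\widetilde{d}^{\,i}$ lies in $\fm\HOM_{\sC_T}(\widetilde\cF^i, \widetilde\cF^{i+2})$. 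For $X \in \fh = \Hom_\bk(\fh^*, \bk)$ I would define a $\bk$-linear ``division by $X$'' operation $\mu_X$ on $\fm\HOM_{\sC_T}(\widetilde\cF^i, \widetilde\cF^{i+2})$ by composing: the isomorphism $\fm\HOM_{\sC_T}(\widetilde\cF^i, \widetilde\cF^{i+2}) \cong \fm \otimes_R \HOM_{\sC_T}(\widetilde\cF^i, \widetilde\cF^{i+2})$ coming from \textbf{(EF1)}; reduction modulo $\fm^2$, which by \textbf{(EF2)} produces $\fh^* \otimes_\bk \HOM_{\sC_{(T)}}(\cF^i, \cF^{i+2})$; and contraction with $X$, landing in $\HOM_{\sC_{(T)}}(\cF^i, \cF^{i+2})\{-2\}$. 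Using associativity of composition in $\sC_T$—so that $\widetilde{d}\widetilde{d}$ is a ``chain map'' $\widetilde\cF \to \widetilde\cF[2]$ of pseudo-complexes—and the fact that $\mu_X$ commutes with pre- and post-composition by arbitrary morphisms of $\sC_T$, one checks that the family $(\mu_X(\widetilde{d}^{\,i+1}\widetilde{d}^{\,i}))_i$ is a genuine chain map $\mu_{\cF, X}\colon \cF \to \cF\la 2\ra$, recovering the example of \S\ref{ss:monodromy-idea}.

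\textbf{Naturality and the $R^\vee$-action.} Next I would show $\mu_{-,X}$ is a well-defined natural transformation by routine homotopy bookkeeping. Changing $\widetilde{d}^{\,i}$ by $\rho^i \in \fm\HOM$ changes $\mu_{\cF, X}$ by the null-homotopy with components $\For(\mu_X(\rho^i))$; and for a chain map $f\colon \cF \to \cG$ in $\sC_{(T)}$, lifting it to a ``pseudo chain map'' $\widetilde f$ and analyzing $\widetilde f^{\,i+1}\widetilde{d}_\cF^{\,i} - \widetilde{d}_\cG^{\,i}\widetilde f^{\,i} \in \fm\HOM$ yields $f \circ \mu_{\cF, X} = \mu_{\cG, X}\circ f$ in $\Kb\sC_{(T)}$; independence of the object-lift follows similarly, using that two lifts are linked by $\For$-liftable isomorphisms. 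Since the recipe commutes with $\la 1\ra$ on the nose, $\mu_{-,X} \in Z(\Kb\sC_{(T)}, \la 1\ra)_2$. Finally $R^\vee = \Sym_\bk(\fh)$ is the free commutative graded $\bk$-algebra on $\fh$ (all in even degree) and the $\mu_{-,X}$ pairwise commute in the graded-commutative ring $Z(\Kb\sC_{(T)}, \la 1\ra)$, so $X \mapsto \mu_{-,X}$ extends uniquely to a graded $\bk$-algebra map $\mu\colon R^\vee \to Z(\Kb\sC_{(T)}, \la 1\ra)$, and $\mu_\cF$ acts as described.

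\textbf{The essential-image criterion.} The equivalence $\mu_\cF(\fm^\vee) = \{0\} \iff \mu_\cF(\fh) = \{0\}$ is formal, since $\mu_\cF$ is an algebra map and $\fm^\vee = \fh \cdot R^\vee$. If $\cF$ lies in the essential image of $\For$, choose the lift to be an honest complex in $\sC_T$: its curvature vanishes, so $\mu_\cF(\fh) = \{0\}$ (naturality lets one pass from an isomorphism in $\Kb\sC_{(T)}$ to the assertion about $\mu$). Conversely, assume $\mu_\cF(\fh) = \{0\}$ and fix a pseudo-lift $\widetilde\cF$. The first step is to kill the leading term of the curvature: for each $X$ the chain map $\mu_X(\widetilde{d}\widetilde{d})$ is null-homotopic, and since $X \mapsto \mu_X(\widetilde{d}\widetilde{d})$ is $\bk$-linear into the space of null-homotopic maps, one may choose an $\fh^*$-valued null-homotopy, lift its components to corrections in $\fm\HOM_{\sC_T}(\widetilde\cF^i, \widetilde\cF^{i+1})$, and replace $\widetilde{d}$ accordingly; afterwards $\widetilde{d}\widetilde{d} \in \fm^2\HOM$, which is precisely condition~\eqref{eq:new-monodromy}.

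\textbf{Straightening, and the main obstacle.} It remains to convert a pseudo-lift with curvature in $\fm^2\HOM$ into an honest complex in $\sC_T$, and this is where I expect the real work to lie. The plan is to iterate for $k \ge 2$: whenever $\widetilde{d}\widetilde{d} \in \fm^k\HOM$, the Bianchi identity $\widetilde{d}\cdot(\widetilde{d}\widetilde{d}) = (\widetilde{d}\widetilde{d})\cdot\widetilde{d}$ shows its $\fm^k$-leading term is a cocycle, hence defines an obstruction class in $\Hom_{\Kb\sC_{(T)}}(\cF, \cF[2]\{-2k\}) \otimes_\bk \Sym^k\fh^*$; provided this class vanishes, a suitable modification of $\widetilde{d}$ by $\fm^k\HOM$-terms (enlarging the $\widetilde\cF^i$ by $\For$-contractible complexes over $\sC_T$ if needed) lowers the curvature into $\fm^{k+1}\HOM$. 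The iteration terminates because the curvature has internal degree $0$ throughout while $(\fm^k\HOM_{\sC_T}(\widetilde\cF^i, \widetilde\cF^{i+2}))_0 = 0$ for $k \gg 0$ (graded Homs between objects of $\sC_{(T)}$ being bounded), so after finitely many steps $\widetilde{d}\widetilde{d} = 0$; the final complex has $\For$-image $\cF$ since every correction lies in $\fm\HOM$. The delicate point is exactly this vanishing of the higher obstruction classes: it cannot hold for all pseudo-complexes (else every object of $\Kb\sC_{(T)}$ would straighten and the criterion would be vacuous), so one has genuinely used the hypothesis $\mu_\cF(\fh) = \{0\}$ (it is precisely the $k = 1$ obstruction), while the $k \ge 2$ cases should be dispatched by a weight/purity-type vanishing—or a ``second equivariant formality'' for $\Kb\sC_T$—extracted from the structure of $\Dmix(\UGB)$ recorded in~\cite{Mak}. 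Carrying out this straightening cleanly, and pinning down the exact vanishing statement it requires, is what I expect to be the hardest part.
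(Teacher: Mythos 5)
Your construction of $\mu$ is the paper's construction: the ``division by $X$'' you describe (identify $\fm M$ with $\fm\otimes_R M$ using freeness, reduce mod $\fm^2$, contract with $X$) is exactly the operator $\Phi_{M,X}$ of Lemma~\ref{lem:monodromy-module}, applied to the curvature $d\circ d$ of a pseudo-complex lift; your homotopy bookkeeping for independence of the lift and for naturality is precisely the content of Lemmas~\ref{lem:monodromy-cHOM}, \ref{lem:mu-and-nu} and \ref{lem:mu-as-graded-center}, which the paper merely packages by introducing the categories $\PChb\sC_T$, $\z\PChb\sC_T$, $\z\PKb\sC_T$, showing $\mu_{-,X}$ lies in the graded center of $(\z\PKb\sC_T,\la1\ra)$, and transporting along the equivalence $R_K$ with $\Kb\sC_{(T)}$. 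The extension to a graded algebra map on $R^\vee=\Sym_\bk(\fh)$ is likewise identical. Up to that point your proposal and the paper coincide.

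The divergence is the essential-image criterion, and you should know that the paper does not do the work your last paragraph calls for: its entire proof of that statement is ``The last statement is clear from the construction.'' The content implicitly intended is exactly the two steps you already carry out: an honest lift has zero curvature, so objects in the essential image have vanishing monodromy; conversely, null-homotopies for the $\mu_{\cF,X}$ lift to corrections of the pseudo-differential by elements of $\fm\cHOM^1$, after which the curvature lies in $\fm^2\cHOM^2$ --- i.e.\ the situation of \eqref{eq:new-monodromy}, which the paper asserts (in the motivational \S\ref{ss:monodromy-idea}, without proof) to be equivalent to equivariance. Your $k\ge2$ straightening program, with obstruction classes in $\Hom_{\Kb\sC_{(T)}}(\cF,\cF[2]\{-2k\})$ and a hoped-for purity vanishing, appears nowhere in the paper; the paper never passes from ``curvature in $\fm^2$'' to an honest complex in $\sC_T$. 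So relative to the paper you have missed no idea --- you reproduce everything it actually argues --- but as a stand-alone proof your converse is indeed incomplete, and your instinct that the $k\ge2$ obstructions have no visible reason to vanish in the abstract generality of Definition~\ref{defn:monodromic-triple} (where no weight or parity structure is available) is sound; the one-line proof in the paper offers no mechanism for killing them. Two smaller points: the parenthetical inference in your last paragraph is off --- unconditional vanishing of the $k\ge2$ obstructions (for pseudo-complexes whose curvature already lies in $\fm^2$) would not make the criterion vacuous, since the $k=1$ obstruction $\mu_\cF(\fh)$ is genuinely nontrivial (witness $\cT_s$); rather, such vanishing is exactly what a complete proof of the stated ``iff'' would require. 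And note that this criterion is never invoked later in the paper (only the construction of $\mu$ and its compatibilities are used), so the gap you flag, while real for the literal statement, does not propagate.
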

The construction of $\mu$ will occupy the rest of this section.

\begin{rmk}
 The intuition for Definition~\ref{defn:monodromic-triple} is that $\fh = \bk \otimes_\Z X_*(T)$ for an algebraic torus $T$, $\sC_T$ (resp.~$\sC_{(T)}$) consists of $T$-equivariant (resp.~$T$-monodromic) $\bk$-sheaves on a $T$-space, and $\For$ forgets the $T$-equivariance. Proposition~\ref{prop:monodromy} should then be thought of as constructing a functorial $T$-monodromy action.
\end{rmk}
\begin{rmk}
 As pointed out to the author by G.~Dhillon, our monodromy action is a special case of the ``cohomology operators'' of \cite{Gul, Eis, AS}. Indeed, the nonstandard homological algebra introduced below is not needed to define our monodromy action. However, we will use the entire setup below to define the wall-crossing functors in \S\ref{sec:wall-crossing}.
\end{rmk}

\subsection{Categories of pseudo complexes}
Let $A$ be a graded $\bk$-algebra, and let $(\sC, \{1\})$ be a graded $A$-linear category. A \emph{graded object} in $\sC$ is a sequence $\cF = (\cF^i)_{i \in \Z}$ of objects in $\sC$. Given graded objects $\cF$ and $\cG$ in $\sC$, define $\cHOM_\sC(\cF, \cG)$, a graded object in $A\lh\gmod$, by
\[
 \cHOM^n_\sC(\cF, \cG) = \prod_{i \in \Z} \HOM_\sC(\cF^i, \cG^{i+n}).
\]
As a $\bk$-module, $\cHOM_\sC(\cF, \cG)$ is bigraded: degree $m$ elements of the graded $A$-module $\cHOM^n_\sC(\cF, \cG)$ are given bidegree $(n, m)$. For graded objects $\cF, \cG, \cH$ in $\sC$, the obvious induced composition
\[
 (-) \circ (-)\colon \cHOM_\sC(\cG, \cH) \times \cHOM_\sC(\cF, \cG) \to \cHOM_\sC(\cF, \cH)
\]
is bigraded and $A$-bilinear.

A (bounded) \emph{pre-complex} $(\cF, d_\cF)$ in $\sC$ consists of a graded object $\cF = (\cF^i)$ in $\sC$, with $\cF^i = 0$ for all but finitely many $i$, together with a degree $(1, 0)$ element $d_\cF \in \cHOM_\sC(\cF, \cG)$, called the \emph{pre-differential}. In components, this consists of morphisms $d_\cF^i\colon \cF^i \to \cF^{i+1}$ in $\sC$.
 
Given pre-complexes $\cF$ and $\cG$ in $\sC$, we make $\cHOM_\sC(\cF, \cG)$ into a pre-complex in $A\lh\gmod$ using the pre-differential
\[
 df = d_\cG \circ f - (-1)^n f \circ d_\cF \quad \mbox{ for } f \in \cHOM^n_\sC(\cF, \cG).
\]
This is the \emph{graded Hom pre-complex} $(\cHOM_\sC(\cF, \cG), d)$. Restricting to elements whose second degree in the bidegree is 0, we get the (non-graded) \emph{Hom pre-complex} \\$(\cHom_\sC(\cF, \cG), d)$, a pre-complex in $\bk\lh\ngmod$.

Now let $(\sC_T, \sC_{(T)}, \For)$ be an $\fh$-monodromic triple. Henceforth, we will drop the subscript $\sC_T$ from the various Homs.
\begin{defn}
 A \emph{pseudo complex} $(\cF, d_\cF)$ (in $\sC_T$) is a pre-complex in $\sC_T$ satisfying
 \begin{equation} \label{eq:pseudo-complex}
  d_\cF \circ d_\cF \in \fm\cHOM^2(\cF, \cF).
 \end{equation}
 We call $d_\cF$ a \emph{pseudo differential}. In components, this consists of morphisms \\$d_\cF^i\colon \cF^i \to \cF^{i+1}$ in $\sC_T$ satisfying $d_\cF^{i+1} \circ d_\cF^i \in \fm\HOM(\cF^i, \cF^{i+2})$.
\end{defn}

Let $\cF, \cG$ be pseudo complexes. Then the graded Hom pre-complex \\$(\cHOM(\cF, \cG), d)$ has the property that for any $n \in \Z$, $d^{n+1} \circ d^n$ maps $\cHOM^n(\cF, \cG)$ into $\fm\cHOM^{n+2}(\cF, \cG)$. Applying $\bk \otimes_R (-)$ termwise yields an honest complex $(\z\!\cHOM(\cF, \cG), \z d)$ in $\bk\lh\gmod$. Thus we have the following commutative diagram, where the vertical arrows are the natural quotient maps:
\[
 \begin{tikzcd}
  \cdots \ar[r] & \cHOM^{-1}(\cF, \cG) \ar[r, "d^{-1}"] \ar[d, twoheadrightarrow] & \cHOM^0(\cF, \cG) \ar[r, "d^0"] \ar[d, twoheadrightarrow] & \cHOM^1(\cF, \cG) \ar[r] \ar[d, twoheadrightarrow] & \cdots \\
  \cdots \ar[r] & \z\cHOM^{-1}(\cF, \cG) \ar[r, "\z d^{-1}"] & \z\cHOM^0(\cF, \cG) \ar[r, "\z d^0"] & \z\cHOM^1(\cF, \cG) \ar[r] & \cdots \\
 \end{tikzcd}
\]
We now define the following graded $R$-linear categories and functors:
\[
 \PChb\sC_T \xrightarrow{P} \z\PChb\sC_T \xrightarrow{Q} \z\PKb\sC_T.
\]
The objects in each category are pseudo complexes. Grading shift $\{1\}$ is induced from that of $\sC_T$. For pseudo complexes $\cF$ and $\cG$, the graded Homs are defined by
\begin{align*}
 \HOM_{\PChb\sC_T}(\cF, \cG) &= (d^0)^{-1}(\fm\cHOM^1(\cF, \cG)), \\
 \HOM_{\z\PChb\sC_T}(\cF, \cG) &= \ker(\z d^0), \\
 \HOM_{\z\PKb\sC_T}(\cF, \cG) &= \ker(\z d^0)/\im(\z d^{-1}).
\end{align*}
The functors $P, Q$ are the identity map on objects and natural quotient maps on morphisms.

\begin{defn}
 Morphisms in $\PChb\sC_T$ are called \emph{pseudo maps}. Thus, a pseudo map $\varphi\colon \cF \to \cG$ is an element of $\cHom^0(\cF, \cG)$ satisfying
 \begin{equation} \label{eq:pseudo-map}
  d\varphi \in \fm\cHOM^1(\cF, \cG)
 \end{equation}
 In components, this consists of morphisms $\varphi^i\colon \cF^i \to \cG^i$ in $\sC_T$ satisfying
 \[
  d_\cG^i \circ \varphi^i - \varphi^{i+1} \circ d_\cF^{i+1} \in \fm\HOM(\cF^i, \cG^{i+1}).
 \]
\end{defn}

If $(\cF, d_\cF)$ is a pseudo complex in $\sC_T$, then {\bf(EF2)} implies that $(\For\cF, \For d_\cF)$ is a complex in $\sC_{(T)}$. Moreover, for pseudo complexes $\cF$ and $\cG$, the isomorphism of {\bf(EF2)} induces an identification of complexes
\[
 (\z\cHOM(\cF, \cG), \z d) \cong (\cHOM(\For\cF, \For\cG), d).
\]
The latter complex is the graded version of the Hom complex used in ordinary homological algebra to define $\Chb\sC_{(T)}$ and $\Kb\sC_{(T)}$. Hence this identification induces equivalences $R_\Ch\colon \z\PChb\sC_T \to \Chb\sC_{(T)}$ and $R_K\colon \z\PKb\sC_T \to \Kb\sC_{(T)}$.

The situation is summarized in the following diagram:
\[
 \begin{tikzcd}
  \PChb\sC_T \ar[r, "P"] & \z\PChb\sC_T \ar[r, "Q"] \ar[d, "R_\Ch", "\wr"'] & \z\PKb\sC_T \ar[d, "R_K", "\wr"'] \\
                         & \Chb\sC_{(T)} \ar[r, "q"]                        & \Kb\sC_{(T)}
 \end{tikzcd}
\]
Each category is graded $R$-linear with grading shift $\{1\}$, and has an additional autoequivalence, the homological shift $[1]$. Define the \emph{Tate twist} $\langle 1\rangle = [1]\{-1\}$. All functors are graded $R$-linear and commute with $\{1\}, [1], \la1\ra$ on the nose.

\subsection{Triangulated structure on $\z\PKb\sC_T$}
\label{ss:triangulated-structure-on-PKb}
Let $\varphi\colon \cF \to \cG$ be a pseudo map of pseudo complexes. Define the cone pseudo complex $C(\varphi)$ and standard triangle
\[
 \p S(\varphi)\colon \cF \xrightarrow{\varphi} \cG \xrightarrow{\alpha(\varphi)} C(\varphi) \xrightarrow{\beta(\varphi)} \cF[1]
\]
in $\PChb\sC_T$ by the same formula as for complexes.

Since the standard triangles in $\Chb\sC_{(T)}$ are precisely triangles of the form\\$R_\Ch P (\p S(\varphi))$, distinguished triangles in $\Kb\sC_{(T)}$ are triangles that are isomorphic to some $q R_\Ch P (\p S(\varphi))$. Define a triangle $T$ in $\z\PKb\sC_T$ to be distinguished if $T \cong QP(\p S(\varphi))$ for some $\varphi$. This is equivalent to $R_K(T) \cong R_K Q P (S(\varphi)) = q R_\Ch P (S(\varphi))$, i.e.~$R_K(T)$ is isomorphic to a standard triangle in $K^b\sC_{(T)}$. From this description, we deduce the following result.
\begin{lem}
 The definitions above define a triangulated structure on $\z\PKb\sC_T$ making $R_K$ exact.
\end{lem}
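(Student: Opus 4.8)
The plan is to transport the triangulated structure from $\Kb\sC_{(T)}$, which is already a triangulated category by the discussion of \S\ref{ss:sign-convention} (using the ``right'' sign convention), along the equivalence $R_K\colon \z\PKb\sC_T \to \Kb\sC_{(T)}$. Concretely, I would first observe that $R_K$ is an equivalence of additive categories commuting with the shift $[1]$ on the nose, and that by construction the proposed distinguished triangles in $\z\PKb\sC_T$ are exactly those triangles $T$ for which $R_K(T)$ is a distinguished triangle in $\Kb\sC_{(T)}$: indeed $T \cong QP(\p S(\varphi))$ iff $R_K(T) \cong R_K Q P(\p S(\varphi)) = q R_\Ch P(S(\varphi))$, and the right-hand side runs over all standard triangles in $\Kb\sC_{(T)}$ as $\varphi$ ranges over pseudo maps (here one uses that $R_\Ch$ is an equivalence, so every map of complexes in $\sC_{(T)}$ is of the form $R_\Ch P(\varphi)$ up to isomorphism of arrows, because $R_\Ch P$ is essentially surjective on arrows — this last point deserves a sentence).

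The general principle I would then invoke is: if $F\colon \mathsf{D} \to \mathsf{D}'$ is an equivalence of additive categories with shift, $\mathsf{D}'$ is triangulated, and one \emph{defines} a triangle in $\mathsf{D}$ to be distinguished precisely when its image under $F$ is distinguished in $\mathsf{D}'$, then $\mathsf{D}$ is triangulated and $F$ is exact. This is completely formal — each of the octahedral and other axioms (TR1)--(TR4) for $\mathsf{D}$ follows by applying $F$, using the axiom in $\mathsf{D}'$, and pulling back the resulting objects and morphisms via a quasi-inverse of $F$ together with the isomorphisms witnessing that $F$ is an equivalence. The only points requiring a word of care are: (TR1)'s condition that $\id_\cF$ completes to a distinguished triangle, which holds because $QP(\p S(0\colon \cF \to 0))$ maps to a standard triangle; and the fact that ``distinguished'' in $\mathsf{D}$ is closed under isomorphism of triangles, which is immediate since $F$ sends isomorphic triangles to isomorphic triangles.

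I expect the main (very mild) obstacle to be bookkeeping rather than substance: one must check that $R_\Ch \circ P$ hits every morphism of complexes in $\Chb\sC_{(T)}$ up to the appropriate isomorphism, so that the image under $R_K$ of the collection $\{QP(\p S(\varphi))\}$ is genuinely \emph{all} standard triangles of $\Kb\sC_{(T)}$ and not merely a subcollection. This follows because $R_\Ch$ is an equivalence (stated in the excerpt) and $P$ is the identity on objects and surjective on the relevant Hom groups by the definition $\HOM_{\PChb\sC_T}(\cF,\cG) = (d^0)^{-1}(\fm\cHOM^1(\cF,\cG))$ together with {\bf(EF2)}: a map of complexes $\For\cF \to \For\cG$ lifts, via the isomorphism of {\bf(EF2)}, to an element of $\cHom^0(\cF,\cG)$ whose differential lands in $\fm\cHOM^1(\cF,\cG)$, i.e.\ to a pseudo map. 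Once this is noted, the identification of distinguished triangles on the two sides is an equality of classes, and exactness of $R_K$ together with the triangulated structure on $\z\PKb\sC_T$ follows from the formal transport principle above. Hence the stated structure is well-defined and $R_K$ is exact, as claimed.
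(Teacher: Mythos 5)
Your argument is correct and is essentially the paper's own: the paper likewise observes that, since every map of complexes in $\sC_{(T)}$ lifts to a pseudo map, the triangles $q R_\Ch P(\p S(\varphi))$ exhaust the standard triangles of $\Kb\sC_{(T)}$, so the proposed distinguished triangles in $\z\PKb\sC_T$ are exactly those whose $R_K$-image is distinguished, and the triangulated structure is then transported along the equivalence $R_K$. The details you supply (fullness of $P$ via {\bf(EF2)}, closure under isomorphism, the formal transport of the axioms) are precisely the bookkeeping the paper leaves implicit.
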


\subsection{Construction of monodromy}
Let $M$ be a graded free $R$-module. For any $X \in \fh$, define a graded $\bk$-module map
\[
 \Phi_{M, X}\colon \fm M \to \z M\{-2\}
\]
as follows. If $X = 0$, set $\Phi_{M, X} = 0$. Otherwise, extend $X$ to a basis $\{X = X_1, X_2, \ldots, X_r\}$ of $\fh$, with dual basis $\{X^*_1, \ldots, X^*_r\}$ of $\fh^*$. Any element $m \in \fm M$ can be written as $m = X_1^*m_1 + \cdots + X_r^*m_r$ for some $m_i \in M$. Although the elements $m_i$ are not unique, $M$ being graded free ensures that their classes $\z m_i \in \z M$ are well-defined; set $\Phi_{M, X_i}(m) = \z m_i$. It is easy to see that this does not depend on the choice of basis.

The following result is straightforward.
\begin{lem} \label{lem:monodromy-module}
 Let $M, N$ be graded free $R$-modules.
 \begin{enumerate}
  \item $\Phi_{M, X}$ is linear in $X$.
  \item For any $m \in M$, $\beta \in \fh^*$, and $X \in \fh$, we have
   \[
    \Phi_{M, X}(\beta m) = \langle X, \beta \rangle \z m.
   \]
  \item For any graded $R$-linear map $f\colon M \to N$ and $X \in \fh$, we have
  \[
   \z f \circ \Phi_{M, X} = \Phi_{N, X} \circ f|_{\fm M}.
  \]
 \end{enumerate}
\end{lem}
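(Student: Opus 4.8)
The statement to prove is Lemma~\ref{lem:monodromy-module}, which records three properties of the maps $\Phi_{M,X}\colon \fm M \to \z M\{-2\}$.

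\medskip

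The plan is to reduce everything to the defining formula by choosing a convenient basis. For part (1), I would argue as follows. Linearity in $X$ is vacuous when one of the $X$'s is zero, so fix linearly independent directions. The cleanest approach is to complete an arbitrary nonzero $X$ (or a given pair $X, X'$) to a basis $\{X_1,\dots,X_r\}$ of $\fh$, write out $\Phi_{M,X_i}$ for each $i$ using the single decomposition $m = \sum_j X_j^* m_j$, and observe that for $X = \sum_i c_i X_i$ the dual functional on $\fh^*$ is $X^* = \sum_i c_i X_i^*$ in the appropriate sense — more precisely, if one re-expands $m$ with respect to a basis adapted to $X$, the coefficient extracted is $\sum_i c_i\,{}^0\! m_i$. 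The well-definedness already proved (independence of basis choice, using that $M$ is graded free so the classes ${}^0\! m_i$ are unambiguous) is exactly what makes this manipulation legitimate, so the content is just bookkeeping: $\Phi_{M, \sum c_i X_i}(m) = \sum c_i \Phi_{M, X_i}(m)$.

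\medskip

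For part (2), take $m \in M$ and $\beta \in \fh^*$; I want $\Phi_{M,X}(\beta m) = \langle X, \beta\rangle\, {}^0\! m$. If $X = 0$ both sides vanish. Otherwise extend $X = X_1$ to a basis with dual basis $\{X_1^*,\dots,X_r^*\}$ of $\fh^*$, and expand $\beta = \sum_i \langle X_i, \beta\rangle X_i^*$ (this is just the dual-basis expansion, using $\langle X_i, X_j^*\rangle = \delta_{ij}$). Hence $\beta m = \sum_i X_i^* (\langle X_i,\beta\rangle m)$, which is a valid decomposition of the element $\beta m \in \fm M$ into the form $\sum X_i^* m_i$ with $m_i = \langle X_i,\beta\rangle m$. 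Reading off the $X_1^*$-coefficient gives $\Phi_{M,X_1}(\beta m) = {}^0(\langle X_1,\beta\rangle m) = \langle X, \beta\rangle\, {}^0\! m$, as desired. For part (3), given a graded $R$-linear $f\colon M \to N$ and $X \neq 0$, extend $X$ to a basis as before; for $m \in \fm M$ write $m = \sum_i X_i^* m_i$. Since $f$ is $R$-linear, $f(m) = \sum_i X_i^* f(m_i)$, which is a decomposition of $f(m) \in \fm N$ (note $f(m)$ indeed lies in $\fm N$ because $f(\fm M)\subseteq \fm N$). Extracting the $X$-component and using that $f$ descends to ${}^0 f\colon {}^0 M \to {}^0 N$ with ${}^0 f({}^0 m_i) = {}^0(f(m_i))$ gives $\Phi_{N,X}(f(m)) = {}^0 f(\Phi_{M,X}(m))$, which is the claimed identity on $\fm M$.

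\medskip

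The only genuine subtlety — and the one place to be careful rather than merely routine — is the well-definedness underlying all three parts: the coefficients $m_i$ in $m = \sum_i X_i^* m_i$ are not unique, but their images ${}^0 m_i \in {}^0 M$ are, precisely because $M$ is graded free (if $\sum_i X_i^* m_i = 0$ then each $m_i \in \fm M$, so ${}^0 m_i = 0$). Once this is invoked, parts (1)--(3) are each a one-line unwinding of the definition against a well-chosen basis, so I would state the reductions explicitly and leave the termwise verifications to the reader.
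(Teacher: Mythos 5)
Your proposal is correct: the paper gives no proof at all (it simply declares the lemma ``straightforward''), and your basis-unwinding verification — including the key observation that graded freeness makes the classes $\z m_i$ well defined, the dual-basis expansion $\beta = \sum_i \la X_i, \beta\ra X_i^*$ for (2), and $R$-linearity of $f$ for (3) — is exactly the intended routine argument. The only step you leave compressed, the bookkeeping in (1) that re-expanding $m$ in a basis adapted to $X=\sum_i c_iX_i$ extracts $\sum_i c_i\, \z m_i$, does check out (the adapted dual basis is $c_1^{-1}X_1^*$, $X_i^* - (c_i/c_1)X_1^*$, and the computation gives precisely that coefficient), so there is no gap.
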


We apply this lemma to $\cHOM(\cF, \cG)$, which is graded free by {\bf(EF1)}.
\begin{lem} \label{lem:monodromy-cHOM}
 Let $\cF, \cG, \cH$ be pre-complexes in $\sC_T$, and let $X \in \fh$.
 \begin{enumerate}
  \item If $\varphi \in \fm\cHOM(\cF, \cG)$ (so that $d\varphi \in \fm\cHOM(\cF, \cG)$), then
  \[
   \z d(\Phi_{\cHOM(\cF, \cG), X}(\varphi)) = \Phi_{\cHOM(\cF, \cG), X}(d\varphi).
  \]
  \item If $\varphi \in \cHOM^0(\cF, \cG)$ and $\psi \in \fm\cHOM^1(\cG, \cH)$ (so that \\$\psi \circ \varphi \in \fm\cHOM^1(\cF, \cH)$), then
  \[
   \Phi_{\cHOM^1(\cF, \cH)}(\psi \circ \varphi) = \Phi_{\cHOM^1(\cG, \cH)}(\psi) \circ \z\varphi.
  \]
  If $\varphi \in \fm\cHOM^1(\cF, \cG)$ and $\psi \in \cHOM^0(\cG, \cH)$, then
  \[
   \Phi_{\cHOM^1(\cF, \cH)}(\psi \circ \varphi) = \z\psi \circ \Phi_{\cHOM^1(\cF, \cG)}(\varphi).
  \]
 \end{enumerate}
\end{lem}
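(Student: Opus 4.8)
The plan is to derive all three identities as purely formal consequences of the naturality of the operators $\Phi_{-,X}$ under graded $R$-linear maps, that is, of Lemma~\ref{lem:monodromy-module}(3), applied to the structure maps of the graded Hom pre-complexes, together with the functoriality of $\bk\otimes_R(-)$. Note first that $\cHOM^n(\cF,\cG)$, $\cHOM^n(\cG,\cH)$, $\cHOM^n(\cF,\cH)$ are all graded free over $R$ by \textbf{(EF1)}, so the relevant $\Phi$'s are defined.

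The first step I would carry out is to record that the maps in sight are graded $R$-linear. Since $\sC_T$ is graded $R$-linear, composition in $\sC_T$ with a fixed morphism is graded $R$-linear, hence the induced composition maps $\cHOM^m(\cG,\cH)\times\cHOM^n(\cF,\cG)\to\cHOM^{m+n}(\cF,\cH)$ are graded $R$-bilinear (and carry no sign). In particular the following are graded $R$-linear and preserve the internal grading: (i) the pre-differential $d^n\colon\cHOM^n(\cF,\cG)\to\cHOM^{n+1}(\cF,\cG)$, being the $\bk$-linear combination $d_\cG\circ(-)-(-1)^n(-)\circ d_\cF$ of two such maps; (ii) right composition $R_\varphi\colon\psi\mapsto\psi\circ\varphi$ with a fixed $\varphi\in\cHOM^0(\cF,\cG)$; (iii) left composition $L_\psi\colon\varphi\mapsto\psi\circ\varphi$ with a fixed $\psi\in\cHOM^0(\cG,\cH)$. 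Because $\fm$ is an ideal of $R$, each of these sends $\fm\cHOM^\bullet$ into $\fm\cHOM^\bullet$, which is exactly what makes the right-hand sides in the statement meaningful.

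Next I would apply Lemma~\ref{lem:monodromy-module}(3). For (1), taking $f=d^n$ and recalling that $\z d^n$ is by construction $\bk\otimes_R d^n$, the naturality square is precisely the claimed equality $\z d\circ\Phi_{\cHOM(\cF,\cG),X}=\Phi_{\cHOM(\cF,\cG),X}\circ d$ on $\fm\cHOM(\cF,\cG)$. For the first identity of (2), taking $f=R_\varphi$ gives $\z R_\varphi\circ\Phi_{\cHOM^1(\cG,\cH),X}=\Phi_{\cHOM^1(\cF,\cH),X}\circ R_\varphi$ on $\fm\cHOM^1(\cG,\cH)$, and it then remains to identify $\z R_\varphi$ with right composition by $\z\varphi$ after passing through the isomorphisms $\z\cHOM^\bullet\cong\cHOM(\For-,\For-)$ of \textbf{(EF2)}; this is just functoriality of $\bk\otimes_R(-)$ together with the compatibility of those isomorphisms with composition (the same compatibility that was used to define the composition on $\z\cHOM$). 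Combining the two facts yields $\Phi_{\cHOM^1(\cF,\cH),X}(\psi\circ\varphi)=\Phi_{\cHOM^1(\cG,\cH),X}(\psi)\circ\z\varphi$. The second identity of (2) is the mirror image, with $L_\psi$ in place of $R_\varphi$.

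I do not expect a genuine obstacle: once Lemma~\ref{lem:monodromy-module} is available the argument is entirely formal. The only things requiring attention are bookkeeping matters — checking carefully that $d^n$, $R_\varphi$, $L_\psi$ really are graded $R$-linear so that Lemma~\ref{lem:monodromy-module}(3) applies, noting that the Koszul sign in $d$ is harmless since $\Phi_{-,X}$ is additive and $R$-linear and hence commutes with multiplication by $\pm1$, keeping the internal $\{-2\}$ shift consistent throughout, and confirming that $\z(\psi\circ\varphi)=\z\psi\circ\z\varphi$ under the chosen identifications. The last of these is the only point where one genuinely invokes structure of the $\fh$-monodromic triple beyond \textbf{(EF1)}.
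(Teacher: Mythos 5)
Your proposal is correct and follows essentially the same route as the paper: the paper's proof also consists of applying Lemma~\ref{lem:monodromy-module}(3) to the pre-differential $d$ for part (1) and to the composition maps $(-)\circ\varphi$ and $\psi\circ(-)$ for part (2). Your extra bookkeeping (graded $R$-linearity of these maps, compatibility with $\bk\otimes_R(-)$) is exactly what the paper leaves implicit, and invoking \textbf{(EF2)} for $\z(\psi\circ\varphi)=\z\psi\circ\z\varphi$ is not even needed, since this already follows from $R$-bilinearity of composition.
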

\begin{proof}
 \begin{enumerate}
  \item Apply Lemma~\ref{lem:monodromy-module}(3) to $d\colon \cHOM(\cF, \cG) \to \cHOM(\cF, \cG)$.
  \item For the first equality, apply Lemma~\ref{lem:monodromy-module}(3) to $- \circ \varphi\colon \cHOM^1(\cG, \cH) \to \cHOM^1(\cF, \cH)$. The second equality is similar. \qedhere
 \end{enumerate}
\end{proof}

Let $(\cF, d_\cF)$ be a pseudo complex, and $\varphi\colon \cF \to \cG$ a pseudo map of pseudo complexes. By \eqref{eq:pseudo-complex} (resp.~\eqref{eq:pseudo-map}), we can define, for any $X \in \fh$,
\[
 \mu_{\cF, X} = \Phi_{\cHom^2(\cF, \cF), X}(d_\cF \circ d_\cF) \qquad (\mbox{resp.~}\nu_{\varphi, X} = \Phi_{\cHom^1(\cF, \cG), X}(d\varphi)).
\]
This is a degree $-2$ element in $\z\cHOM^2(\cF, \cF)$ (resp.~$\z\cHOM^1(\cF, \cG)$) that measures the failure of $\cF$ to be a complex (resp.~the failure of $\varphi$ to be a map of complexes) ``in the $X$ direction.''

\begin{lem} \label{lem:mu-and-nu}
  Let $\varphi\colon \cF \to \cG$ and $\psi\colon \cG \to \cH$ be pseudo maps of pseudo complexes. For all $X \in \fh$, we have
  \begin{align*}
   \z d(\mu_{\cF, X}) &= 0, \\
   \z d(\nu_{\varphi, X}) &= \mu_{\cG, X} \circ \z\varphi - \z\varphi \circ \mu_{\cF, X}, \\
   \nu_{\psi \circ \varphi, X} &= \nu_{\psi, X} \circ \z\varphi + \z\psi \circ \nu_{\varphi, X}.
  \end{align*}
\end{lem}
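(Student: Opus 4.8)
The plan is to reduce each of the three identities to a purely formal computation in the graded Hom pre-complex $(\cHOM(\cF,\cG),d)$, followed by a single application of Lemma~\ref{lem:monodromy-cHOM}. The input is the following trio of identities, each an immediate consequence of the formula $df = d_\cG\circ f - (-1)^n f\circ d_\cF$ for $f\in\cHOM^n$:
\begin{align*}
 d(d_\cF\circ d_\cF) &= 0,\\
 d(d\varphi) &= (d_\cG\circ d_\cG)\circ\varphi - \varphi\circ(d_\cF\circ d_\cF),\\
 d(\psi\circ\varphi) &= (d\psi)\circ\varphi + \psi\circ(d\varphi).
\end{align*}
The first is the cancellation $d_\cF\circ d_\cF\circ d_\cF - d_\cF\circ d_\cF\circ d_\cF = 0$ (the sign $(-1)^2$ appearing because $d_\cF\circ d_\cF\in\cHOM^2$); the other two follow by expanding $d\varphi=d_\cG\circ\varphi-\varphi\circ d_\cF$ and collecting terms, using that $\varphi,\psi$ have cohomological degree $0$. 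Because $\cF,\cG,\cH$ are pseudo complexes and $\varphi,\psi$ pseudo maps, each summand above lies in $\fm\cHOM(-,-)$, so $\Phi_{-,X}$ is defined on all of them and the quantities $\mu_{-,X}$, $\nu_{-,X}$ make sense.

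For the first identity of the lemma, apply Lemma~\ref{lem:monodromy-cHOM}(1) to $d_\cF\circ d_\cF\in\fm\cHOM^2(\cF,\cF)$: this gives $\z d(\mu_{\cF,X})=\z d\bigl(\Phi_X(d_\cF\circ d_\cF)\bigr)=\Phi_X\bigl(d(d_\cF\circ d_\cF)\bigr)$, which vanishes by the first pre-complex identity. For the second, Lemma~\ref{lem:monodromy-cHOM}(1) applied to $d\varphi$ gives $\z d(\nu_{\varphi,X})=\Phi_X(d(d\varphi))$; substituting the second pre-complex identity and using $\bk$-linearity of $\Phi_X$ reduces the claim to evaluating $\Phi_X\bigl((d_\cG\circ d_\cG)\circ\varphi\bigr)$ and $\Phi_X\bigl(\varphi\circ(d_\cF\circ d_\cF)\bigr)$, which by the two halves of Lemma~\ref{lem:monodromy-cHOM}(2) equal $\mu_{\cG,X}\circ\z\varphi$ and $\z\varphi\circ\mu_{\cF,X}$ respectively. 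The third identity is the same argument one cohomological degree lower: Lemma~\ref{lem:monodromy-cHOM}(2) applied to $(d\psi)\circ\varphi$ and $\psi\circ(d\varphi)$ turns the third pre-complex identity into $\nu_{\psi\circ\varphi,X}=\nu_{\psi,X}\circ\z\varphi+\z\psi\circ\nu_{\varphi,X}$.

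I do not anticipate a real obstacle: the content is entirely formal once Lemma~\ref{lem:monodromy-cHOM} is in hand. The two points needing care are the sign bookkeeping in the three pre-complex identities (in particular noting that the middle terms cancel in the expansion of $d(d\varphi)$), and the observation that Lemma~\ref{lem:monodromy-cHOM}(2), although stated for degree-$1$ elements, holds verbatim for the degree-$2$ elements $(d_\cG\circ d_\cG)\circ\varphi$ and $\varphi\circ(d_\cF\circ d_\cF)$ appearing in the second identity — the proof, being a direct invocation of Lemma~\ref{lem:monodromy-module}(3) for left and right composition, is insensitive to the degree.
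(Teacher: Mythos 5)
Your proposal is correct and is essentially the paper's own argument: the paper's proof simply says the three identities follow from the definitions and Lemma~\ref{lem:monodromy-cHOM}, and your computation of $d(d_\cF\circ d_\cF)$, $d(d\varphi)$, and $d(\psi\circ\varphi)$ followed by applications of $\Phi_{-,X}$ is exactly that verification. Your remark that part~(2) of Lemma~\ref{lem:monodromy-cHOM} extends verbatim to the degree-$2$ elements needed for the middle identity (since it is just Lemma~\ref{lem:monodromy-module}(3) applied to the $R$-linear composition maps) is accurate and fills in the only point the paper leaves implicit.
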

\begin{proof}
 Each equation is straightforward to prove from the definitions and \\Lemma~\ref{lem:monodromy-cHOM}.
\end{proof}

We view $\mu_{\cF, X}$ as a degree 0 element of $\z\cHOM^0(\cF, \cF\langle 2 \rangle)$ via the natural identification of complexes
\[
 (\z\cHOM^n(\cF, \cF), \z d^n) \cong (\z\cHOM^{n-2}(\cF, \cF\langle 2 \rangle)\{2\}, \z d^{n-2}).
\]
By Lemma~\ref{lem:mu-and-nu}(1), $\mu_{\cF, X}$ defines a morphism $\cF \to \cF\langle 2 \rangle$ in both $\z\PChb\sC_T$ and $\z\PKb\sC_T$. We similarly view $\nu_{\varphi, X}$ as a degree $0$ element of $\z\cHOM^{-1}(\cF, \cG \langle 2 \rangle)$.

\begin{lem} \label{lem:mu-as-graded-center}
 Any $\mu_{-, X}$, $X \in \fh$, is a degree 2 element of $Z(\z\PChb\sC_T, \la1\ra)$.
\end{lem}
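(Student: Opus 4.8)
\emph{Proof plan.} My plan is to unwind the definition of the graded center and verify three things: (1) for each pseudo complex $(\cF, d_\cF)$, the element $\mu_{\cF, X}$ is a well-defined morphism $\cF \to \cF\la 2\ra$ in $\z\PChb\sC_T$; (2) these are natural in $\cF$; and (3) the degree-$2$ sign relation, which since $(-1)^2 = 1$ just says $\mu_{\cF\la1\ra, X} = \mu_{\cF, X}\la1\ra$. Each point will be extracted from Lemmas~\ref{lem:monodromy-module}--\ref{lem:mu-and-nu}.

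For (1), I would note that $\mu_{\cF, X} = \Phi_{\cHom^2(\cF,\cF), X}(d_\cF \circ d_\cF)$ is independent of the auxiliary basis of $\fh$ (Lemma~\ref{lem:monodromy-module}), hence determined by $(\cF, d_\cF)$; then, regarding it as a degree-$0$ element of $\z\cHOM^0(\cF, \cF\la2\ra)$ via the identification of complexes $(\z\cHOM^\bullet(\cF,\cF), \z d) \cong (\z\cHOM^{\bullet - 2}(\cF, \cF\la2\ra)\{2\}, \z d)$, Lemma~\ref{lem:mu-and-nu}(1) gives $\z d(\mu_{\cF, X}) = 0$, so $\mu_{\cF, X} \in \ker(\z d^0) = \HOM_{\z\PChb\sC_T}(\cF, \cF\la2\ra)$.

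For (2), given a morphism $\varphi\colon \cF \to \cG$ of $\z\PChb\sC_T$ I would lift it to a pseudo map (possible as $P$ is full on morphisms) and apply Lemma~\ref{lem:mu-and-nu}(2), which identifies the naturality defect $\mu_{\cG, X}\circ\varphi - \varphi\circ\mu_{\cF, X}$ with the coboundary $\z d(\nu_{\varphi, X})$. This defect does not depend on the chosen lift (the left-hand side does not), and the content of the step is to see that it is the zero morphism of $\z\PChb\sC_T$ --- this is the technical heart. I expect it to be the main obstacle; here one uses that $\varphi$ is a genuine morphism of $\z\PChb\sC_T$, not merely a pseudo map of pre-complexes. (Even if one can only conclude vanishing after passing to the homotopy category, one still obtains a degree-$2$ element of $Z(\z\PKb\sC_T, \la1\ra)$, which is what feeds into the construction of $\mu$ in Proposition~\ref{prop:monodromy}.)

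For (3), I would observe that $\la1\ra = [1]\{-1\}$, in the ``right'' convention of \S\ref{ss:sign-convention}, sends $d_\cF$ to its relabelled, internally shifted version with no sign, so $d_{\cF\la1\ra} \circ d_{\cF\la1\ra}$ corresponds to $(d_\cF \circ d_\cF)\la1\ra$ under $\cHom^2(\cF\la1\ra, \cF\la1\ra) \cong \cHom^2(\cF, \cF)$; since $\Phi$ visibly commutes with $\la1\ra$, this yields $\mu_{\cF\la1\ra, X} = \mu_{\cF, X}\la1\ra$. Steps (1) and (3) are formal; all the difficulty is concentrated in the vanishing claim of step (2).
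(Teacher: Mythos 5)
Your proposal is correct and follows essentially the same route as the paper's proof: closedness of $\mu_{\cF,X}$ from Lemma~\ref{lem:mu-and-nu}(1), the naturality defect identified with the coboundary $\z d(\nu_{\varphi,X})$ via Lemma~\ref{lem:mu-and-nu}(2), and compatibility with $\la1\ra$ directly from the construction. The ``main obstacle'' you flag is handled exactly by your fallback: the paper verifies the centrality equation in $\z\PKb\sC_T$ (the defect, being a coboundary, dies only in the homotopy category), and it is this element of $Z(\z\PKb\sC_T,\la1\ra)$ that feeds into Proposition~\ref{prop:monodromy}, so the occurrence of $\z\PChb\sC_T$ in the statement should be read in that light.
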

\begin{proof}
 It is clear from the construction that $\mu_{-, X}$ commutes with $\la1\ra$. For any pseudo map $\varphi\colon \cF \to \cG$, we must show that
 \[
  \z\varphi\la2\ra \circ \mu_{\cF, X} = \mu_{\cG, X} \circ \z\varphi
 \]
 in $\z\PKb\sC_T$. This follows from Lemma~\ref{lem:mu-and-nu}(2), which translates to
 \[
  \z d(\nu_{\varphi, X}) = \mu_{\cG, X} \circ \z\varphi - \z\varphi\langle 2 \rangle \circ \mu_{\cF, X} \in \z\cHOM^0(\cF, \cG\langle 2 \rangle). \qedhere
 \]
\end{proof}
\begin{proof}[Proof of~Proposition~\ref{prop:monodromy}]
 Since $\mu_{\cF, X}$ is linear in $X$, the map $X \mapsto \mu_{-, X}$ extends uniquely to a graded $\bk$-algebra map
 \[
  \mu\colon R^\vee \to Z(\PKb\sC_T, \langle 1 \rangle)\colon f \mapsto \mu_{-, f}.
 \]
 An equivalence of categories with shift induces an isomorphism of their graded centers. Hence, using $R_K$, we obtain the desired map $\mu$. The last statement is clear from the construction.
\end{proof}

\section{Wall-crossing functors}
\label{sec:wall-crossing}
Let $(W, \fh)$ be a reflection faithful realization, and fix $s \in S$. In this section, we will construct an endofunctor $\xi_s$ of $\Dmix(\UGB)$. These functors will restrict to exact functors on $\Pmix(\UGB)$, where they correspond to the wall-crossing functors on $\cO_0^\gr$.

\subsection{Idea}
\label{ss:wall-crossing-idea}
Return for a moment to the geometric setup of \S\ref{ss:monodromy-idea}. Let $\Dbm(\UGU)$ be the derived category of $U$-equivariant mixed complexes on the enhanced flag variety $G/U$. Bezrukavnikov--Yun~\cite{BY} defined a category $\hDbm(\UGUby)$, monoidal under $U$-convolution $\stackrel{U}{\ast}$, as a certain (``free-monodromic'') completion of a full subcategory of $\Dbm(\UGU)$. This category $\hDbm(\UGUby)$ acts by $\stackrel{U}{\ast}$ on the left of $\Dbm(\UGB)$ and contains the ``free-monodromic'' tilting sheaf $\widetilde{\cT}_s$, giving the endofunctor
\[
 \xi_s = \widetilde{\cT}_s \stackrel{U}{\ast} (-)\colon \Dbm(\UGB) \to \Dbm(\UGB).
\]
This is the functor we wish to imitate in our Soergel-theoretic setting.

Although we do not have an analogue of $\hDbm(\UGUby)$, we can start to guess the definition of $\xi_s$ as follows. First, since proper pushforward along the natural projection $\UGU \to \UGB$ sends $\widetilde{\cT}_s$ to $\cT_s$, the following diagram commutes up to natural isomorphism by proper base change:
\begin{equation} \label{diag:wall-crossing-geometry}
 \begin{tikzcd}
  \Dbm(\UGB) \ar[rd, "\xi_s = \widetilde{\cT}_s \stackrel{U}{\ast} (-)"] \\
  \Dbm(\BGB) \ar[u, "\For"] \ar[r, "\cT_s \stackrel{B}{\ast} (-)"'] & \Dbm(\UGB).
 \end{tikzcd}
\end{equation}

Now, back in our setting, we do have an analogue of the functor $\cT_s \stackrel{B}{\ast} (-)$:
\[
 \cT_s\otimes_R(-)\colon \Dmix(\BGB) \to \Dmix(\UGB).
\]
From here on, we omit $\otimes_R$ from the notation. Explicitly, this functor sends $\cF$ in $\Dmix(\BGB)$ to the complex $\cT_s\cF$ is given by
\[
 (\cT_s\cF)^i = \cF^{i-1}\{1\} \oplus B_s\cF^i \oplus \cF^{i+1}\{-1\}, \quad
 d_{\cT_s\cF}^i = \begin{bmatrix}
                 -d_\cF^{i-1} & \BsR 1_{\cF^i} & \\
                              & \BsBs d_\cF^i  & \RBs 1_{\cF^{i+1}} \\
                              &                & -d_\cF^{i+1}
                \end{bmatrix},
\]
and the morphism $\varphi\colon \cF \to \cG$ to $1_{\cT_S}\varphi\colon \cT_s\cF \to \cT_s\cG$, given by
\[
 (1_{\cT_s}\varphi)^i = \diag(\varphi^{i-1}, \BsBs \varphi^i, \varphi^{i+1})\colon (\cT_s\cF)^i \to (\cT_s\cG)^i.
\]
Here, each matrix entry is viewed modulo $\fm$, i.e.~as morphisms in $\Parity(\UGB)$.

\begin{figure}
 \[
 \scalebox{0.8}{
  \xymatrix@R=0.8em@C=1.3em@M=0.3em{
                 &                                                                       & \ddots \ar[rd]                                                          &                                                                       & \\
                 & \ddots \ar[rd]                                                        &                                                                         & \cF^{i-1}\{1\} \ar[rd]^-{-d_\cF^{i-1}}                                & \\
  \ddots \ar[rd] &                                                                       & B_s\cF^{i-1} \ar[ru]^-{\BsR 1_{\cF^{i-1}}} \ar[rd]^-{\BsBs d_\cF^{i-1}} &                                                                       & \cF^i\{1\} \ar[rd]^-{-d_\cF^i}                                          & \\
                 & \cF^{i-1}\{-1\} \ar[ru]^-{\RBs 1_{\cF^{i-1}}} \ar[rd]_-{-d_\cF^{i-1}} &                                                                         & B_s\cF^i \ar[ru]^-{\BsR 1_{\cF^i}} \ar[rd]^-{\BsBs d_\cF^i}           &                                                                         & \cF^{i+1}\{1\} \ar[rd]^-{-d_\cF^{i+1}} \\
                 &                                                                       & \cF^i\{-1\} \ar[ru]^-{\RBs 1_{\cF^i}} \ar[rd]_-{-d_\cF^i}               &                                                                       & B_s\cF^{i+1} \ar[ru]^-{\BsR 1_{\cF^{i+1}}} \ar[rd]^-{\BsBs d_\cF^{i+1}} &                                                    & \cF^{i+2}\{1\} \ar[rd] & \\
                 &                                                                       &                                                                         & \cF^{i+1}\{-1\} \ar[ru]^-{\RBs 1_{\cF^{i+1}}} \ar[rd]_-{-d_\cF^{i+1}} &                                                                         & B_s\cF^{i+2} \ar[ru]^-{\BsR 1_{\cF^{i+2}}} \ar[rd] &                        & \ddots \\
                 &                                                                       &                                                                         &                                                                       & \cF^{i+2}\{-1\} \ar[ru]^-{\RBs 1_{\cF^{i+2}}} \ar[rd]                   &                                                    & \ddots \\
                 &                                                                       &                                                                         &                                                                       &                                                                         & \ddots \\
  }
  }
 \]
 \caption{The pre-complex $\cT_s\cF$ for $\cF \in \Dmix(\UGB)$} \label{fig:TsF}
\end{figure}
\begin{figure}
 \[
 \scalebox{0.8}{
  \xymatrix@R=0.8em@C=1.3em@M=0.3em{
                 &                                 & \ddots \ar[rd]                                                                 &                                                                                            & \\
                 & \ddots \ar[rd]                  &                                                                                & \cF^{i-1}\{1\} \ar[rd] \ar@{~>}[rddd]                                                      & \\
  \ddots \ar[rd] &                                 & B_s\cF^{i-1} \ar[ru] \ar[rd] \ar@{-->}[rddd]                                   &                                                                                            & \cF^i\{1\} \ar[rd] \ar@{~>}[rddd]                                          &  \\
                 & \cF^{i-1}\{-1\} \ar[ru] \ar[rd] &                                                                                & B_s\cF^i \ar[ru]^{\RBs \mu_{\cF, \alpha_s^\vee}^{i-1}\phantom{MM}} \ar[rd] \ar@{-->}[rddd] &                                                                            & \cF^{i+1}\{1\} \ar[rd] \\
                 &                                 & \cF^i\{-1\} \ar[ru]^{\BsR \mu_{\cF, \alpha_s^\vee}^{i-1}\phantom{MMM}} \ar[rd] &                                                                                            & B_s\cF^{i+1} \ar[ru]^{\RBs \mu_{\cF, \alpha_s^\vee}^i\phantom{MM}} \ar[rd] &                              & \cF^{i+2}\{1\} \ar[rd] & \\
                 &                                 &                                                                                & \cF^{i+1}\{-1\} \ar[ru]^{\BsR \mu_{\cF, \alpha_s^\vee}^i\phantom{MMM}} \ar[rd]             &                                                                            & B_s\cF^{i+2} \ar[ru] \ar[rd] &                        & \ddots \\
                 &                                 &                                                                                &                                                                                            & \cF^{i+2}\{-1\} \ar[ru] \ar[rd]                                            &                              & \ddots \\
                 &                                 &                                                                                &                                                                                            &                                                                            & \ddots \\
  }
  }
 \]
 \caption{The pre-complex $\cT_s\cF$ with correction components} \label{fig:TsF-with-correction}
\end{figure}

By analogy with~\eqref{diag:wall-crossing-geometry}, the desired functor $\xi_s$ should extend this to $\Dmix(\UGB)$, i.e.~to a map of complexes $\varphi\colon\cF \to \cG$ in $\Parity(\UGB)$. There are two difficulties. First, since $d_\cF^i$ and $\varphi^i$ are only defined modulo $\fm$ (on the left!), $\BsBs d_\cF^i$ and $\BsBs \varphi^i$ are not well-defined modulo $\fm$. We solve this by lifting these to actual bimodule maps, i.e.~work with a pseudo map $\varphi\colon \cF \to \cG$ of pseudo complexes. Second, for a pseudo complex $\cF$ (resp.~pseudo map $\varphi\colon \cF \to \cG$), the pre-complex $\cT_s\cF$ (resp.~pre-complex map $1_{\cT_s}\varphi$) is in general not a pseudo complex (resp.~pseudo map); see Figure~\ref{fig:TsF}. Indeed, \eqref{eq:polynomial-forcing} implies that for the component
\[
 \BsBs (d_\cF \circ d_\cF)\colon B_s\cF \to B_s\cF \qquad (\text{resp.} \BsBs d\varphi\colon B_s\cF \to B_s\cG)
\]
of $d_{\cT_s\cF} \circ d_{\cT_s\cF}$ (resp.~$d(1_{\cT_s}\varphi)$) to vanish modulo $\fm$, $d_\cF \circ d_\cF$ (resp.~$d\varphi$) must vanish modulo $\fm^2$.

The following key computation tells us how to proceed. Choose a basis \\$\{X_1, \ldots, X_r\}$ of $\fh$, and write
\[
 d_\cF \circ d_\cF = \sum X_j^* \widetilde{\mu}_{\cF, X_j} \quad\mbox{ for some } \widetilde{\mu}_{\cF, X_j} \in \cHOM^2(\cF, \cF),
\]
so that $\widetilde{\mu}_{\cF, X_j}$ lifts $\mu_{\cF, X_j}$. Then by \eqref{eq:polynomial-forcing} and the linearity of $\mu_{\cF, -}$,
\begin{equation} \label{eq:wall-crossing-key-computation-objects}
 \BsBs (d_\cF \circ d_\cF) = \BsBs \sum X_j^* \tilde{\mu}_{\cF, X_j} = \BsRBs \sum \langle \alpha_s^\vee, X_j^* \rangle \mu_{\cF, X_j} = \BsRBs \mu_{\cF, \alpha_s^\vee}
\end{equation}
in $\z\cHOM^2(\cF, \cF)$. Therefore, we may add correction components $-\BsR \mu_{\cF, \alpha_s^\vee}$ (or $-\RBs \mu_{\cF, \alpha_s^\vee}$) as in Figure~\ref{fig:TsF-with-correction} to turn $\cT_s\cF$ into an actual complex in $\Parity(\UGB)$. (Since $\mu_{\cF, \alpha_s^\vee}$ is only defined modulo $\fm$, we do not get a pseudo complex in \\$\Parity(\BGB)$.) Similarly, the computation
\begin{equation} \label{eq:wall-crossing-key-computation-morphisms}
 \BsBs d\varphi = \BsBs \sum X_j^* \tilde{\nu}_{\varphi, X_j} = \BsRBs \sum \langle \alpha_s^\vee, X_j^* \rangle \nu_{\varphi, X_j} = \BsRBs \nu_{\varphi, \alpha_s^\vee}
\end{equation}
in $\z\cHOM^1(\cF, \cG)$ suggests adding correction components $-\BsR \nu_{\varphi, \alpha_s^\vee}$ (or $-\RBs \nu_{\varphi, \alpha_s^\vee}$) to turn $1_{\cT_s}\varphi$ into an actual map of complexes in $\Parity(\UGB)$.

\subsection{Statement}
\label{ss:statement}
Let $\sC_{s,T} \subset \Parity(\BGB)$ be the full subcategory consisting of $R\{n\}$ and $B_s\{n\}$ for $n \in \Z$. Let $(\sC_T, \sC_{(T)}, \For)$ be an $\fh$-monodromic triple. Suppose that we have a bifunctor
\[
 (-)\ast(-)\colon \sC_{s,T} \times \sC_T \to \sC_T
\]
such that the induced maps on graded Hom are graded, $R$-linear on the left, and $R$-middle-linear. These assumptions ensure that we have an induced exact functor $\cT_s \ast (-)\colon \Kb\sC_T \to \Kb\sC_{(T)}$ as in~\S\ref{ss:wall-crossing-idea}, and that the key computations \eqref{eq:wall-crossing-key-computation-objects} and \eqref{eq:wall-crossing-key-computation-morphisms} still hold.

The following is the main result of this section.
\begin{prop} \label{prop:wall-crossing}
 In the situation above, there exists a functor $\xi_s\colon \Kb\sC_{(T)} \to \Kb\sC_{(T)}$, defined up to natural isomorphism, with the following properties:
 \begin{enumerate}
  \item $\xi_s \circ \{1\} = \{1\} \circ \xi_s$ and $\xi_s \circ [1] = [1] \circ \xi_s$.
  \item $\xi_s$ is exact.
  \item There is a natural isomorphism $\cT_s \ast (-) \cong \xi_s \circ \For$.
  \item Let $\cF \in \Kb\sC_{(T)}$ and $f \in (R^\vee)^s$, homogeneous of degree $d$. Then
   \[
    \xi_s\mu_{\cF, f} = \mu_{\xi_s\cF, f}\colon \xi_s\cF \to \xi_s\cF\la d \ra.
   \]
  \item Let $(\sC'_T, \sC'_{(T)}, \For)$ be another $\fh$-monodromic triple equipped with a bifunctor $(-) \ast (-): \sC_{s,T} \times \sC'_T \to \sC'_T$ as above. Let $\xi'_s: \Kb\sC'_{(T)} \to \Kb\sC'_{(T)}$ be the resulting endofunctor. Let $F\colon \sC_T \to {\sC'}_T$ be a graded $R$-linear functor, inducing a functor $F\colon \Kb\sC_{(T)} \to \Kb\sC'_{(T)}$. If $F$ intertwines $\ast$ and $\ast'$ up to natural isomorphism, then $F \circ \xi_s \cong \xi'_s \circ F$.
 \end{enumerate}
\end{prop}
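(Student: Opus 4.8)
The plan is to turn the heuristic of~\S\ref{ss:wall-crossing-idea} into an actual construction, using the pseudo-complex formalism of~\S\ref{sec:monodromy} to make sense of the monodromy corrections. Recall the graded $R$-linear equivalences $R_\Ch\colon\z\PChb\sC_T\simto\Chb\sC_{(T)}$ and $R_K\colon\z\PKb\sC_T\simto\Kb\sC_{(T)}$; via the latter, every object of $\Kb\sC_{(T)}$ is isomorphic to $\For\widetilde\cF$ for a pseudo complex $\widetilde\cF$ in $\sC_T$, and any two such lifts of $\cF$ are isomorphic in $\z\PKb\sC_T$. Fixing a lift $\widetilde\cF$ of $\cF$, I would form the pre-complex $\cT_s\ast\widetilde\cF$ in $\sC_T$ as in~\S\ref{ss:wall-crossing-idea}, apply $\For$, and then define $\xi_s\cF$ by adding the correction component $-\RBs\mu_{\widetilde\cF,\alpha_s^\vee}$ (or equally $-\BsR\mu_{\widetilde\cF,\alpha_s^\vee}$) displayed in Figure~\ref{fig:TsF-with-correction}, where $\mu$ is the monodromy of Proposition~\ref{prop:monodromy}. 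That $\xi_s\cF$ is a genuine complex in $\sC_{(T)}$ is precisely~\eqref{eq:wall-crossing-key-computation-objects}: modulo $\fm$ the square of the uncorrected differential is $\BsRBs\mu_{\widetilde\cF,\alpha_s^\vee}$ on the $B_s$-summand and zero elsewhere, while the new terms created by the correction cancel by naturality of $\RBs$ and $\BsR$ together with $\z d(\mu_{\widetilde\cF,\alpha_s^\vee})=0$ (Lemma~\ref{lem:mu-and-nu}(1)). The same recipe on morphisms sends a strict pseudo map $\varphi\colon\widetilde\cF\to\widetilde\cG$ to $1_{\cT_s}\varphi$ corrected by $-\RBs\nu_{\varphi,\alpha_s^\vee}$, which is a chain map $\xi_s\cF\to\xi_s\cG$ by~\eqref{eq:wall-crossing-key-computation-morphisms} and Lemma~\ref{lem:mu-and-nu}(2).

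I would then check that $\xi_s$ is a well-defined functor, independent up to natural isomorphism of all choices. Using the additivity $\nu_{\psi\circ\varphi,X}=\nu_{\psi,X}\circ\z\varphi+\z\psi\circ\nu_{\varphi,X}$ of Lemma~\ref{lem:mu-and-nu}(3) (and naturality of $\RBs$), the construction above respects composition, hence defines an honest functor $\z\PChb\sC_T\to\Chb\sC_{(T)}$. The key point is that it descends to $\z\PKb\sC_T\to\Kb\sC_{(T)}$: for a pseudo map of the form $\z d(\psi)$ I would produce an explicit null-homotopy of its image, built from $\psi$ and the operators $\Phi$ of~\S\ref{sec:monodromy} in the same way as the correction term itself. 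Independence of the two choices of correction is witnessed by an explicit isomorphism of complexes (the identity off the $B_s$- and $\cF^{\bullet\mp1}\{\pm1\}$-summands and built from $\mu_{\widetilde\cF,\alpha_s^\vee}$), and independence of the lift $\widetilde\cF$ is then formal; composing with a quasi-inverse of $R_K$ yields $\xi_s\colon\Kb\sC_{(T)}\to\Kb\sC_{(T)}$, well defined up to natural isomorphism.

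Properties~(1)--(5) follow by inspection. Property~(1) is immediate, since $\ast$, $\mu$ and $\nu$ are all compatible with $\{1\}$ and $[1]$. For~(3), when $\cF=\For\cG$ with $\cG$ a genuine complex in $\sC_T$ one may take $\widetilde\cF=\cG$; then $d_\cG\circ d_\cG=0$ forces $\mu_{\cG,\alpha_s^\vee}=0$ (and $\nu_{\psi,\alpha_s^\vee}=0$ for genuine chain maps $\psi$), so no correction occurs and $\xi_s\circ\For=\cT_s\ast(-)$ on the nose. For~(2) it suffices to see that $\xi_s$ sends standard triangles to distinguished triangles: since $\ast$ and the cone construction are both additive and given by explicit formulas, and since by Lemma~\ref{lem:mu-and-nu}(2) the monodromy of a cone $C(\varphi)$ is the evident triangular combination of $\mu_{\widetilde\cF}$, $\mu_{\widetilde\cG}$ and $\nu_\varphi$, the corrected $\cT_s\ast C(\widetilde\varphi)$ is isomorphic to the cone of the corrected $1_{\cT_s}\varphi$, so $\xi_sC(\varphi)\cong C(\xi_s\varphi)$. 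For~(5), a graded $R$-linear $F$ preserves $\fm\cHOM$ and commutes with the operators $\Phi$ (Lemma~\ref{lem:monodromy-module}(3)), hence intertwines $\mu$ and $\nu$; together with $F$ intertwining $\ast$ and $\ast'$, this gives $F\circ\xi_s\cong\xi'_s\circ F$.

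The subtlest point, which I expect to be the main obstacle, is property~(4). Here one must compare $\xi_s(\mu_{\cF,f})$ — obtained by lifting $\mu_{\cF,f}$ to a pseudo map and correcting — with $\mu_{\xi_s\cF,f}$ — read off from the $\fm$-part of the squared differential of a pseudo-complex lift of $\xi_s\cF$. Working throughout with a single lift $\widetilde\cF$ of $\cF$, one expresses the squared differential of $\xi_s\cF$ in terms of $d_{\widetilde\cF}\circ d_{\widetilde\cF}$ and the correction, and then the two descriptions of the $f$-monodromy are forced to agree precisely because $f\in(R^\vee)^s$: the $s$-invariance is exactly what makes the polynomial-forcing relation~\eqref{eq:polynomial-forcing} interact compatibly with the $\alpha_s^\vee$-dependence of the correction (for non-invariant $f$ one would instead find the two monodromies differing by an $s$-twist). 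Isolating this compatibility and bookkeeping the signs is the bulk of the argument; constructing the null-homotopies in the well-definedness step is the other nontrivial ingredient.
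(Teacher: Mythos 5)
Your route is the same as the paper's: lift to pseudo complexes, correct the differential of $\cT_s \ast \widetilde\cF$ by $-\BsR\,\mu_{\widetilde\cF,\alpha_s^\vee}$ and morphisms by $-\BsR\,\nu_{\varphi,\alpha_s^\vee}$, descend to the homotopy category, and verify (1)--(5) by direct computation; most of what you write matches the paper's Steps 1--3 and \S\ref{ss:wall-crossing-proof}. The one genuine gap is in the descent. Your claim that respecting composition ``defines an honest functor $\z\PChb\sC_T\to\Chb\sC_{(T)}$'' is false at the chain level: the formula on morphisms uses the actual pseudo map $\varphi$ (the entry $\BsBs\,\varphi$ is not well defined modulo $\fm$), so the construction is only a functor on $\PChb\sC_T$, and replacing $\varphi$ by $\varphi+\beta\psi$ with $\beta\in\fm$ changes the output by a chain map that is \emph{nonzero} in $\Chb\sC_{(T)}$ --- by \eqref{eq:polynomial-forcing} its $B_s$-component is $\BsRBs\,\la\alpha_s^\vee,\beta\ra\psi$ modulo $\fm$. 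Consequently, to obtain a well-defined functor out of $\z\PKb\sC_T$ you must kill \emph{two} ambiguities up to homotopy: (i) $\fm$-multiples $\beta\psi$, and (ii) coboundaries $dh$; your plan only addresses (ii). The missing check (i) is the paper's Step 2, done by the explicit homotopy whose single nonzero entry is $\BsR\,\la\alpha_s^\vee,\beta\ra\psi^i$, using $\nu_{\beta\psi,\alpha_s^\vee}=\la\alpha_s^\vee,\beta\ra\,\z(d\psi)$ from Lemma~\ref{lem:monodromy-module}(2). It is of the same flavor as the homotopy you do propose, so the gap is repairable, but as written the well-definedness argument is incomplete.

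Two smaller points of comparison. For exactness, the paper produces an explicit isomorphism of triangles already in $\Chb\sC_{(T)}$, resting on a separate direct computation of $\mu_{C(\varphi),X}$ as a triangular matrix in $\mu_{\cF,X},\nu_{\varphi,X},\mu_{\cG,X}$ together with $\nu_{\alpha(\varphi),X}=\nu_{\beta(\varphi),X}=0$; this is not quite Lemma~\ref{lem:mu-and-nu}(2), which you cite. For (4), which you expect to be the main obstacle, the paper's argument is short: pick a basis of $\fh$ with $X_1=\alpha_s^\vee$ and $s(X_i)=X_i$ for $i>1$, so that $(R^\vee)^s=\bk[(\alpha_s^\vee)^2,X_2,\dots,X_r]$ and it suffices to verify the identity for $f=(\alpha_s^\vee)^2$ and for $s$-fixed $f\in\fh$ directly from the construction. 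Finally, to get the equalities (not mere isomorphisms) in (1) you should, as the paper does, choose the quasi-inverse $R_K^{-1}$ to commute with $\{1\}$ and $[1]$ on the nose.
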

We construct $\xi_s$ in~\S\ref{ss:wall-crossing-construction}. The proof of Proposition~\ref{prop:wall-crossing} occupies~\S\ref{ss:wall-crossing-proof}.
\begin{rmk}
 We insist on equality, not just natural isomorphism, in (1) to simplify certain computations with $\xi_s$. In particular, the exactness of $\xi_s$ is proved by checking directly that it sends a standard triangle to a distinguished triangle. The sign convention in~\S\ref{ss:sign-convention} is chosen to simplify this computation.
\end{rmk}

\subsection{Construction of $\xi_s$}
\label{ss:wall-crossing-construction}
We will proceed as follows (see diagram \eqref{eq:wall-crossing-construction-outline} below): define a functor $\p\xi_s\colon \PChb\sC_T \to \Chb\sC_{(T)}$ by explicitly modifying $\cT_s \ast (-)$ as in~\S\ref{ss:wall-crossing-idea}; show that $q \circ \p\xi_s$ factors (uniquely) through a functor $\z(\p\xi_s)\colon \z\PKb\sC_T \to \Kb\sC_{(T)}$; finally, choose a quasi-inverse $R_K^{-1}$ commuting with $\{1\}$ and $[1]$ on the nose, and set $\xi_s = \z(\p\xi_s) \circ R_K^{-1}$.
\begin{equation} \label{eq:wall-crossing-construction-outline}
 \begin{tikzcd}
  \PChb\sC_T \ar[r, "P"] \ar[rd, dashed, "\p\xi_s"'] & \z\PChb\sC_T \ar[r, "Q"] \ar[d, "R_\Ch"] & \z\PKb\sC_T \ar[d, shift left=3pt, "R_K"] \ar[d, dashed, shift right=3pt, "\z(\p\xi_s)"'] \\
                                & \Chb\sC_{(T)} \ar[r, "q"] & \Kb\sC_{(T)}
 \end{tikzcd}
\end{equation}

\begin{rmk}
 Recall from~\S\ref{ss:wall-crossing-idea} that we could add the correction components to $\cT_s \ast (-)$ in two ways. Here, we work with one of these ways. Both choices lead to naturally isomorphic $\z(\p\xi_s)$, hence naturally isomorphic $\xi_s$.
\end{rmk}

\subsubsection*{Step 1: Define $\p\xi_s$}
As in~\S\ref{ss:wall-crossing-idea}, we omit $\ast$ from the notation. Define $\p\xi_s$ on objects by
\[
 (\p\xi_s\cF)^i = \cF^{i-1}\{1\} \oplus B_s\cF^i \oplus \cF^{i+1}\{-1\}, \quad
 d_{\p\xi_s\cF}^i =
 \begin{bmatrix}
  -d_\cF^{i-1} & \BsR 1_{\cF^i}                    & \\
               & \BsBs d_\cF^i                     & \RBs 1_{\cF^{i+1}} \\
               & - \BsR \mu_{\cF, \alpha_s^\vee}^i & -d_\cF^{i+1}
 \end{bmatrix},
\]
and on morphisms by
\[
 (\p\xi_s\varphi)^i =
 \begin{bmatrix}
  \varphi^{i-1} &                                      & \\
                & \BsBs \varphi^i                            & \\
                & -\BsR \nu_{\varphi, \alpha_s^\vee}^i & \varphi^{i+1}
 \end{bmatrix},
\] 
where all matrix entries are to be viewed as a morphism in $\sC_{(T)}$ (e.g.~the top left entry of $(\p\xi_s\cF)^i$ is in $\z\HOM(\cF^{i-1}\{1\}, \cF^i\{1\})$. This is illustrated in Figure~\ref{fig:pxi}.

\newsavebox\boxone
\savebox\boxone{%
 $\left[
  \begin{smallmatrix}
  -d_\cG^{i-1} & \BsR 1_{\cG^i}                    & \\
               & \BsBs d_\cG^i                     & \RBs 1_{\cG^{i+1}} \\
               & - \BsR \mu_{\cG, \alpha_s^\vee}^i & -d_\cG^{i+1}
  \end{smallmatrix}
  \right]$
}
\newsavebox\boxtwo
\savebox\boxtwo{%
 $\left[
  \begin{smallmatrix}
   -d_\cF^{i-1} & \BsR 1_{\cF^i} & \\
                & \BsBs d_\cF^i         & \RBs 1_{\cF^{i+1}} \\
                & - \BsR \mu_{\cF, \alpha_s^\vee}^i & -d_\cF^{i+1}
  \end{smallmatrix}
  \right]$
}
\newsavebox\boxthree
\savebox\boxthree{%
 $\left[
  \begin{smallmatrix}
   \varphi^{i-1} &                                      & \\
                 & \BsBs \varphi^i                      & \\
                 & -\BsR \nu_{\varphi, \alpha_s^\vee}^i & \varphi^{i+1}
  \end{smallmatrix}
  \right]$
}
\newsavebox\boxfour
\savebox\boxfour{%
 $\left[
  \begin{smallmatrix}
   \varphi^i &                                          & \\
             & \BsBs \varphi^{i+1}                      & \\
             & -\BsR \nu_{\varphi, \alpha_s^\vee}^{i+1} & \varphi^{i+2}
  \end{smallmatrix}
  \right]$
}

\begin{figure}
\[
 \scalebox{0.9}{
 \begin{tikzcd}[row sep=huge, ampersand replacement=\&]
  \cdots \ar[r]
  \&
  {\begin{matrix} R\cG^{i-1}\{1\} \\ B_s\cG^i \\ R\cG^{i+1}\{-1\}\end{matrix}} \ar[rrrrr, "\usebox\boxone"]
  \&\&\&\&\&
  {\begin{matrix} R\cG^i\{1\} \\ B_s\cG^{i+1} \\ R\cG^{i+2}\{-1\}\end{matrix}} \ar[r]
  \&
  \cdots \\
  \cdots \ar[r]
  \&
  {\begin{matrix} R\cF^{i-1}\{1\} \\ B_s\cF^i \\ R\cF^{i+1}\{-1\}\end{matrix}} \ar[rrrrr, "\usebox\boxtwo"'] \ar[u, "\usebox\boxthree"]
  \&\&\&\&\&
  {\begin{matrix} R\cF^i\{1\} \\ B_s\cF^{i+1} \\ R\cF^{i+2}\{-1\}\end{matrix}} \ar[r] \ar[u, "\usebox\boxfour"']
  \&
  \cdots
 \end{tikzcd}
 }
\]
\caption{Effect of $\p\xi_s$ on $\varphi\colon \cF \to \cG$ (in degrees $i$ and $i+1$)} \label{fig:pxi}
\end{figure}

To check that this is a map of complexes in $\sC_{(T)}$, we compute (now omitting indices)
\begin{multline*}
 d_{\p\xi_s\cG} \circ (\p\xi_s\varphi) =
 \begin{bmatrix}
  -d_\cG & \BsR 1_{\cG}                    & \\
         & \BsBs d_\cG                     & \RBs 1_\cG \\
         & - \BsR \mu_{\cG, \alpha_s^\vee} & -d_\cG
 \end{bmatrix}
 \begin{bmatrix}
  \varphi &                                    & \\
          & \BsBs \varphi                      & \\
          & -\BsR \nu_{\varphi, \alpha_s^\vee} & \varphi
 \end{bmatrix} \\
 =
 \begin{bmatrix}
  -d_\cG \circ \varphi & \BsR \varphi                                                                                     & \\
                       & \BsBs (d_\cG \circ \varphi) - \BsRBs \nu_{\varphi, \alpha_s^\vee}                                & \RBs \varphi \\
                       & -\BsR (\mu_{\cG, \alpha_s^\vee} \circ \varphi) + \BsR (d_\cG \circ \nu_{\varphi, \alpha_s^\vee}) & -d_\cG \circ \varphi
 \end{bmatrix}
\end{multline*}
and
\begin{multline*}
 (\p\xi_s\varphi) \circ d_{\p\xi_s\cF} =
 \begin{bmatrix}
  \varphi &                                    & \\
          & \BsBs \varphi                      & \\
          & -\BsR \nu_{\varphi, \alpha_s^\vee} & \varphi
 \end{bmatrix}
 \begin{bmatrix}
  -d_\cF & \BsR 1_{\cF}                    & \\
         & \BsBs d_\cF                     & \RBs 1_{\cF} \\
         & - \BsR \mu_{\cF, \alpha_s^\vee} & -d_\cF
 \end{bmatrix} \\
 =
 \begin{bmatrix}
  -\varphi^i \circ d_\cF & \BsR \varphi                       & \\
                         & \BsBs (\varphi \circ d_\cF)        & \RBs \varphi \\
                         & -\BsR (\nu_{\varphi, \alpha_s^\vee} \circ d_\cF) - \BsR (\varphi \circ \mu_{\cF, \alpha_s^\vee}) & -\varphi \circ d_\cF
 \end{bmatrix}.
\end{multline*}
The $(1, 1)$ and $(3, 3)$ entries of these two matrices agree because $\varphi$ is a pseudo map. The $(2, 2)$ entries agree by \eqref{eq:wall-crossing-key-computation-morphisms}. The $(3, 2)$ entries agree by Lemma~\ref{lem:mu-and-nu}(2).

Given pseudo maps $\varphi\colon \cF \to \cG$ and $\psi\colon \cG \to \cH$, a similar direct computation using Lemma~\ref{lem:mu-and-nu}(3) shows that $\p\xi_s\psi \circ \p\xi_s\varphi = \p\xi_s(\psi \circ \varphi)$. Thus $\p\xi_s$ is a functor.

\subsubsection*{Step 2: $q \circ \p\xi_s$ factors through $P$}
Given $\psi \in \cHOM^0(\cF, \cG)$ of degree $-2$ and $\beta \in \fm$, we must show that the map $(\p\xi_s)(\beta\psi)\colon \p\xi_s\cF \to \p\xi_s\cG$ of complexes in $\sC_{(T)}$ is nullhomotopic. We claim that a homotopy is given by
\[
 h^i =
 \begin{bmatrix}
  &                                                  &  \\
  &                                                  &  \\
  & \BsR \langle \alpha_s^\vee, \beta \rangle \psi^i &
 \end{bmatrix}
 \colon (\p\xi_s\cF)^i \to (\p\xi_s\cG)^{i-1},
\]
where $\BsR \langle \alpha_s^\vee, \beta \rangle \psi^i\colon B_s\cF^i \to R\cG^i\{-1\}$ is in the $(3, 2)$ entry (see Figure~\ref{fig:homotopy}).

\newsavebox\boxfive
\savebox\boxfive{%
 $\left[
  \begin{smallmatrix}
   -d_\cG^{i-2} & \BsR 1_{\cG^{i-1}}                    & \\
                & \BsBs d_\cG^{i-1}                     & \RBs 1_{\cG^i} \\
                & - \BsR \mu_{\cG, \alpha_s^\vee}^{i-1} & -d_\cG^i
  \end{smallmatrix}
  \right]$
}
\newsavebox\boxsix
\savebox\boxsix{%
 $\left[
  \begin{smallmatrix}
   -d_\cF^{i-1} & \BsR 1_{\cF^i}                    & \\
                & \BsBs d_\cF^i                     & \RBs 1_{\cF^{i+1}} \\
                & - \BsR \mu_{\cF, \alpha_s^\vee}^i & -d_\cF^{i+1}
  \end{smallmatrix}
  \right]$
}

\begin{figure}
 \[
 \scalebox{0.75}{
 \begin{tikzcd}[column sep=small, ampersand replacement=\&]
   \cdots \ar[r]
   \&
   {\begin{matrix} R\cG^{i-2}\{1\} \\ B_s\cG^{i-1} \\ R\cG^i\{-1\} \end{matrix}} \ar[rrrrrrrrr, "\usebox\boxfive"]
   \&\&\&\&\&\&\&\&\&
   {\begin{matrix} R\cG^{i-1}\{1\} \\ B_s\cG^i \\ R\cG^{i+1}\{-1\} \end{matrix}} \ar[rrrrrrrrr]
   \&\&\&\&\&\&\&\&\&
   {\begin{matrix} R\cG^i\{1+1\} \\ B_s\cG^{i+1} \\ R\cG^{i+2}\{-1\} \end{matrix}} \ar[r]
   \&
   \cdots \\
   \cdots \ar[r]
   \&
   {\begin{matrix} R\cF^{i-2}\{1\} \\ B_s\cF^{i-1} \\ R\cF^i\{-1\} \end{matrix}} \ar[rrrrrrrrr] \ar[u]
   \&\&\&\&\&\&\&\&\&
   {\begin{matrix} R\cF^{i-1}\{1\} \\ B_s\cF^i \\ R\cF^{i+1}\{-1\} \end{matrix}} \ar[rrrrrrrrr, "\usebox\boxsix"'] \ar[u, "(\p\xi_s)(\beta\varphi)^i"] \ar[lllllllllu, dashed, "h^i"]
   \&\&\&\&\&\&\&\&\&
   {\begin{matrix} R\cF^i\{1\} \\ B_s\cF^{i+1} \\ R\cF^{i+2}\{-1\} \end{matrix}} \ar[r] \ar[u] \ar[lllllllllu, dashed, "h^{i+1}"]
   \&
   \cdots
 \end{tikzcd}
 }
 \]
 \caption{Homotopy $h$ in Step 2} \label{fig:homotopy}
\end{figure}
Indeed, by direct computation,
\[
 (dh)^i = d_{\p\xi_s\cG}^{i-1} \circ h^i + h^{i+1} \circ d_{\p\xi_s\cF}^i
 =
\begin{bmatrix}
  &                                                                                             & \\
  & \BsRBs \langle \alpha_s^\vee, \beta \rangle \psi^i                                          & \\
  & \BsR \langle \alpha_s^\vee, \beta \rangle (d_\cG^i \circ \psi^i - \psi^{i+1} \circ d_\cF^i) &
 \end{bmatrix}.
\]
By \eqref{eq:polynomial-forcing}, $\BsRBs \langle \alpha_s^\vee, \beta \rangle \psi^i \equiv \BsBs \beta\psi^i \bmod \fm$, and by Lemma~\ref{lem:monodromy-module}(2),
\begin{multline*}
 \nu_{\beta\psi, \alpha_s^\vee} = \Phi_{\cHOM^0(\cF, \cG), \alpha_s^\vee}(d(\beta\psi)) = \Phi_{\cHOM^0(\cF, \cG), \alpha_s^\vee}(\beta(d\psi)) \\
 = \langle \alpha_s^\vee, \beta \rangle (\z(d\psi)) = \langle \alpha_s^\vee, \beta \rangle (d_\cG \circ \psi - \psi \circ d_\cF).
\end{multline*}
Thus $(dh)^i = (\p\xi_s)(\beta\psi)^i$, as desired.

\subsubsection*{Step 3: $q \circ \p\xi_s$ factors through $Q \circ P$}
Since the quotient map $\cHOM^{-1}(\cF, \cG) \to \z\cHOM^{-1}(\cF, \cG)$ is surjective, it remains to show that for any $h \in \cHOM^{-1}(\cF, \cG)$, the map $\p\xi_s(dh)\colon \p\xi_s\cF \to \p\xi_s\cG$ of complexes in $\sC_{(T)}$ is nullhomotopic. We claim that a homotopy is given by
\[
 H^i =
 \begin{bmatrix}
  -h^{i-1} &           & \\
           & \BsBs h^i & \\
           &           & -h^{i+1}
 \end{bmatrix}
 \colon (\p\xi_s\cF)^i \to (\p\xi_s\cG)^{i-1}.
\]
Indeed,
\begin{multline*}
 (dH)^i = d_{\p\xi_s\cG}^{i-1} \circ H^i + H^{i+1} \circ d_{\p\xi_s\cF}^i \\
 =
 \begin{bmatrix}
  (dh)^{i-1} &                                                                                                 & \\
             & \BsBs (dh)^i                                                                                    & \\
             & -\BsR (\mu_{\cG, \alpha_s^\vee}^{i-2} \circ h^i - h^{i+2} \circ \mu_{\cF, \alpha_s^\vee}^{i-1}) & (dh)^{i+1}
 \end{bmatrix},
\end{multline*}
and by Lemma~\ref{lem:monodromy-cHOM}(2),
\begin{multline*}
 \mu_{dh, \alpha_s^\vee} = \Phi_{\alpha_s^\vee}(ddh) = \Phi_{\alpha_s^\vee}(d_\cG \circ d_\cG \circ h - h \circ d_\cF \circ d_\cF) \\
 = \Phi_{\alpha_s^\vee}(d_\cG \circ d_\cG) \circ \z h - \z h \circ \Phi_{\alpha_s^\vee}(d_\cF \circ d_\cF) = \mu_{\cG, \alpha_s^\vee} \circ \z h - \z h \circ \mu_{\cF, \alpha_s^\vee},
\end{multline*}
so $(dH)^i = (\p\xi_s(dh))^i$.

This concludes the construction of $\xi_s$.

\subsection{Proof of Proposition~\ref{prop:wall-crossing}}
\label{ss:wall-crossing-proof}

(1) Since $P$, $Q$, $q$, and $R_K^{-1}$ commute with shifts on the nose, it suffices to prove the claim for $\p\xi_s$. This is a direct computation.
 
(2) By the definition of the triangulated structure on $\z\PKb\sC_T$ and the construction of $\xi_s$, it suffices to show that $q \circ \p\xi_s$ sends a standard triangle to a distinguished triangle. In fact, given a pseudo map $\varphi\colon \cF \to \cG$, we claim that there is an isomorphism
\[
 \begin{tikzcd}[column sep=large]
  \p\xi_s\cF \ar[r, "\p\xi_s\varphi"] \ar[d, equal] & \p\xi_s\cG \ar[r, "\p\xi_s\alpha(\varphi)"] \ar[d, equal] & \p\xi_s C(\varphi) \ar[r, "\p\xi_s\beta(\varphi)"] \ar[d, "\gamma", "\wr"'] & \p\xi_s\cF[1] \ar[d, equal] \\
  \p\xi_s\cF \ar[r, "\p\xi_s\varphi"] & \p\xi_s\cG \ar[r, "\p\xi_s\alpha(\varphi)"] & C(\p\xi_s\varphi) \ar[r, "\p\xi_s\beta(\varphi)"] & \p\xi_s\cF[1]
 \end{tikzcd}
\]
of triangles even in $\Chb\sC_{(T)}$. Here, $\gamma$ is given by the evident isomorphism between
\begin{multline*}
 (\p\xi_s C(\varphi))^i = C(\varphi)^{i-1}\{1\} \oplus C(\varphi)^i \oplus C(\varphi)^{i+1}\{-1\} \\
 = (\cF^i \oplus \cG^{i-1})\{1\} \oplus B_s(\cF^{i+1} \oplus \cG^i) \oplus (\cF^{i+2} \oplus \cG^{i+1})\{-1\}
\end{multline*}
and
\begin{multline*}
 C(\p\xi_s\varphi) = (\p\xi_s\cF)^{i+1} \oplus (\p\xi_s\cG)^i \\
 = (\cF^i\{1\} \oplus B_s\cF^{i+1} \oplus \cF^{i+2}\{-1\}) \oplus (\cG^{i-1}\{1\} \oplus B_s\cG^i \oplus \cG^{i+1}\{-1\}).
\end{multline*}
The only claim that is not clear is that $\gamma$ is a map of complexes. A direct computation writing out $d_{C(\p\xi_s\varphi)}^i$ and $d_{\p\xi_s C(\varphi)}^i$ as 6-by-6 matrices, together with the following lemma, shows that they are indeed identified by $\gamma^i$.
\begin{lem}
 Let $\varphi\colon \cF \to \cG$ be a pseudo map of pseudo complexes. Then
 \begin{align*}
  \mu_{C(\varphi), X}^i = \begin{bmatrix} \mu_{\cF, X}^{i+1} & \\ (-1)^i \nu_{\varphi, X}^{i+1} & \mu_{\cG, X}^i \end{bmatrix}, \quad
  \nu_{\alpha(\varphi), X} = 0, \quad \nu_{\beta(\varphi), X} = 0
 \end{align*}
 for all $X \in \fh$ and $i \in \Z$.
\end{lem}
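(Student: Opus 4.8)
The formulas to be verified are statements about the elements $\mu_{C(\varphi),X}$, $\nu_{\alpha(\varphi),X}$, $\nu_{\beta(\varphi),X}$, each of which is obtained by applying a map of the form $\Phi_{\cHOM(-,-),X}$ to $d\circ d$ (resp.~$d\varphi$). So the plan is simply to compute the relevant pre-differentials and then apply Lemma~\ref{lem:monodromy-module} and Lemma~\ref{lem:monodromy-cHOM}. First I would recall the cone pseudo complex: $C(\varphi)^i = \cF^{i+1}\oplus\cG^i$ with pre-differential the familiar $2\times 2$ matrix $\left[\begin{smallmatrix} -d_\cF & \\ (-1)^{\bullet}\varphi & d_\cG\end{smallmatrix}\right]$ — where the sign on $\varphi$ reflects the ``right'' cone convention of~\S\ref{ss:sign-convention} (this is exactly why the factor $(-1)^i$ appears in the answer). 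The maps $\alpha(\varphi)\colon \cG\to C(\varphi)$ and $\beta(\varphi)\colon C(\varphi)\to\cF[1]$ are the inclusion and projection, involving no signs.

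For the first identity, I would compute $d_{C(\varphi)}\circ d_{C(\varphi)}$ as a $2\times2$ matrix over $\cHOM^2$. The $(1,1)$ entry is $d_\cF\circ d_\cF$; the $(2,2)$ entry is $d_\cG\circ d_\cG$; and the $(2,1)$ entry works out, using the graded Leibniz sign bookkeeping, to $\pm(\varphi\circ d_\cF - d_\cG\circ\varphi)$ up to the degree-dependent sign $(-1)^i$ — i.e.~precisely $(-1)^{i+1}\,d\varphi$ read off in degree $i$, or equivalently $(-1)^i$ times (the negative of) the pre-differential of $\varphi$ in the appropriate degree. Applying $\Phi_{\cHOM^2(C(\varphi),C(\varphi)),X}$ entrywise — which is legitimate because $\Phi$ is compatible with the block decomposition of a graded free module, by Lemma~\ref{lem:monodromy-module}(1),(3) — gives $\mu_{\cF,X}^{i+1}$, $\mu_{\cG,X}^i$, and $(-1)^i\nu_{\varphi,X}^{i+1}$ in the three nonzero blocks, which is the asserted matrix. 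The index shift ($i+1$ versus $i$) is just the reindexing $C(\varphi)^i=\cF^{i+1}\oplus\cG^i$.

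For $\nu_{\alpha(\varphi),X}$ and $\nu_{\beta(\varphi),X}$: since $\alpha(\varphi)$ and $\beta(\varphi)$ are honest maps of pre-complexes in $\sC_T$ (not merely modulo $\fm$) — inclusion and projection strictly intertwine the pre-differentials, as one checks directly from the block form of $d_{C(\varphi)}$ — we have $d\alpha(\varphi)=0$ and $d\beta(\varphi)=0$ in $\cHOM^1$. Hence $\nu_{\alpha(\varphi),X}=\Phi_{\cHOM^1,X}(0)=0$ and likewise $\nu_{\beta(\varphi),X}=0$. This part is immediate once one observes that the cone construction makes $\alpha,\beta$ strict, which is visible from the matrices.

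The only delicate point — and the place I'd expect to spend the most care — is the sign in the $(2,1)$-block, and more generally keeping the ``right'' sign convention of~\S\ref{ss:sign-convention} consistent with the graded Leibniz rule $df = d_\cG\circ f - (-1)^n f\circ d_\cF$ used to define $d$ on $\cHOM^n$; a single misplaced sign propagates into the claimed $(-1)^i$. Concretely one must be careful that ``$\Phi$ applied to $d_{C(\varphi)}\circ d_{C(\varphi)}$ in homological degree $i$'' sees the $(-1)^i$ coming from $C_r(f)$ rather than $C_\ell(f)$. Once the conventions are pinned down, the verification is a routine $2\times2$ matrix computation together with the linearity and composition-compatibility of $\Phi$ from Lemma~\ref{lem:monodromy-module} and Lemma~\ref{lem:monodromy-cHOM}, so I would carry it out explicitly but briefly, flagging the sign as the one subtlety.
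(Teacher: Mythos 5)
Your outline is the paper's proof: compute $d_{C(\varphi)}\circ d_{C(\varphi)}$ blockwise and apply $\Phi_{-,X}$ (which respects the block decomposition of the graded free module $\cHOM(C(\varphi),C(\varphi))$ by Lemma~\ref{lem:monodromy-module}), and get $\nu_{\alpha(\varphi),X}=\nu_{\beta(\varphi),X}=0$ from the fact that inclusion and projection commute with the pseudo differentials on the nose, so $d\alpha(\varphi)$ and $d\beta(\varphi)$ vanish before $\Phi$ is applied. So far this is exactly the intended argument.

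The problem is that the one point of substance in this lemma---the sign---is handled inconsistently as written. Under the ``right'' convention of \S\ref{ss:sign-convention}, which is the one in force, the cone differential is $d_{C(\varphi)}^i=\left[\begin{smallmatrix} d_\cF^{i+1} & \\ (-1)^i\varphi^{i+1} & d_\cG^i\end{smallmatrix}\right]$: the sign sits on $\varphi$ only, and there is \emph{no} minus on $d_\cF$ (that minus belongs to the left convention, which in exchange puts no sign on $\varphi$). Your matrix $\left[\begin{smallmatrix} -d_\cF & \\ (-1)^i\varphi & d_\cG\end{smallmatrix}\right]$ mixes the two conventions: with it, the $(2,1)$ entry of $d_{C(\varphi)}^{i+1}\circ d_{C(\varphi)}^i$ comes out to $(-1)^i\bigl(d_\cG^{i+1}\varphi^{i+1}+\varphi^{i+2}d_\cF^{i+1}\bigr)$, which is not $\pm(d\varphi)^{i+1}$ at all, and moreover $\beta(\varphi)\colon C(\varphi)\to\cF[1]=\Sigma_r\cF$ (whose shifted differential carries no sign) would no longer strictly intertwine the differentials, undermining your argument for $\nu_{\beta(\varphi),X}=0$. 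Your intermediate claim that the entry is ``$(-1)^{i+1}\,d\varphi$'' also contradicts the matrix you then assert, since applying $\Phi$ to it would give $(-1)^{i+1}\nu_{\varphi,X}^{i+1}$ rather than $(-1)^i\nu_{\varphi,X}^{i+1}$. With the correct right cone everything is immediate: the $(2,1)$ entry equals $(-1)^{i+1}\varphi^{i+2}d_\cF^{i+1}+(-1)^i d_\cG^{i+1}\varphi^{i+1}=(-1)^i(d\varphi)^{i+1}$, applying $\Phi_{-,X}$ blockwise gives precisely the stated matrix, and the strict commutation of $\alpha(\varphi)$ and $\beta(\varphi)$ is visible from the same matrix. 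So the plan is right, but pin down the cone differential before the sign chase---in this lemma that bookkeeping is the entire content.
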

\begin{proof}
 The first relation follows from calculating $d_{C(\varphi)} \circ d_{C(\varphi)} $. The second and third relations hold because $\alpha(\varphi)$ and $\beta(\varphi)$ commute on the nose (not just modulo $\fm$) with the pseudo differentials. 
\end{proof}

(3) This is clear from the construction.

(4) Choose a basis $\{X_1, \ldots, X_r\}$ of $\fh$ with $X_1 = \alpha_s^\vee$, $s(X_i) = X_i$ for $i > 1$. Then
\[
 R^s = \bk[(\alpha_s^\vee)^2, X_2, \ldots, X_r],
\]
so it suffices to consider $f = (\alpha_s^\vee)^2$ and $f = X \in \fh$ with $\la \alpha_s, X \ra = 0$. For these cases, it is straightforward to verify the statement directly from the definitions.

(5) Let $\cG$ be a pseudo complex. Choose a basis $X_1, \ldots, X_r$ of $\fh$, and write $d_\cG \circ d_\cG = \sum_j X_j^* \widetilde{\mu}_{\cG, X_j}$. Since $F$ is $R$-linear, $F(d_\cG \circ d_\cG) = \sum_j X_j^* F(\widetilde{\mu}_{\cG, X_j})$. It follows that $\mu_{F\cG, X} = F(\mu_{\cG, X})\colon F\cG \to F\cG\{2\}$ for any $X \in \fh$. The rest of the argument is straightforward.

This concludes the proof of Proposition~\ref{prop:wall-crossing}.

\subsection{Generating tilting objects}
\label{ss:tilting}
Let $s, t \in S$. Applying Proposition~\ref{prop:wall-crossing} with $\sC_T = \Parity(\BGB)$ and $\sC_T = \Parity(\BGPt)$, we obtain exact functors
\[
 \xi_s\colon \Dmix(\UGB) \to \Dmix(\UGB), \quad \xi_s\colon \Dmix(\UGPt) \to \Dmix(\UGPt),
\]
which we call \emph{wall-crossing functors}. It follows follow from Proposition~\ref{prop:wall-crossing}(5) that
\begin{equation} \label{eq:wall-crossing-push-pull}
 \xi_s \circ \pi_t^* \cong \pi_t^* \circ \xi_s, \quad \xi_s \circ \pi_{t*} \cong \pi_{t*} \circ \xi_s,
\end{equation}
and that for $\cF \in \Dmix(\UGB)$ and $\cG \in \Dmix(\BGB)$, we have
\begin{equation} \label{eq:wall-crossing-conv}
 (\xi_s\cF) \ast \cG \cong \xi_s(\cF \ast \cG).
\end{equation}

The following result is an analogue of the mixed version of~\cite[Lemma~5.21]{AR-I}, and is proved in the same way.
\begin{lem} \label{lem:translation-on-standard-and-costandard}
 \begin{enumerate}
  \item For all $w \in W$, $\xi_s\Delta_w$ is perverse. It admits a standard filtration with associated graded $\Delta_{sw} \oplus \Delta_w \la1\ra$ if $sw > w$ and $\Delta_{sw} \oplus \Delta_w\la-1\ra$ if $sw < w$.
  \item For all $w \in W$, $\xi_s\nabla_w$ is perverse. It admits a standard filtration with associated graded $\nabla_{sw} \oplus \nabla_w \la-1\ra$ if $sw > w$ and $\nabla_{sw} \oplus \nabla_w\la1\ra$ if $sw < w$.
 \end{enumerate}
\end{lem}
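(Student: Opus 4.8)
The plan is to imitate the proof of~\cite[Lemma~5.21]{AR-I}: reduce $\xi_s$ on standard and costandard objects to a convolution computation over $\BGB$, and then read off everything from the two filtrations of the tilting object $\cT_s$ together with the standard convolution identities among the $\Delta$'s and $\nabla$'s.

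First I would reduce to $\Dmix(\BGB)$. Write $\Delta^{\mathrm{eq}}_w, \nabla^{\mathrm{eq}}_w \in \Dmix(\BGB)$ for the equivariant standard and costandard objects, so that $\Delta_w = \delta \ast \Delta^{\mathrm{eq}}_w$ and $\nabla_w = \delta \ast \nabla^{\mathrm{eq}}_w$ in $\Dmix(\UGB)$. Since $\xi_s\delta \cong \cT_s$ (Proposition~\ref{prop:wall-crossing}(3)) and $\xi_s$ intertwines the right $\Dmix(\BGB)$-action by~\eqref{eq:wall-crossing-conv}, this gives
\[
 \xi_s\Delta_w \cong \cT_s \ast \Delta^{\mathrm{eq}}_w, \qquad \xi_s\nabla_w \cong \cT_s \ast \nabla^{\mathrm{eq}}_w.
\]
I would then recall from~\cite{Mak} (as analogues of the corresponding results of~\cite{AR-II}): the two short exact sequences
\[
 0 \to \Delta_s \to \cT_s \to \Delta_e\la1\ra \to 0, \qquad 0 \to \nabla_e\la-1\ra \to \cT_s \to \nabla_s \to 0
\]
in $\Pmix(\UGB)$ exhibiting $\cT_s$ as a tilting object; the convolution identities $\Delta^{\mathrm{eq}}_x \ast \Delta^{\mathrm{eq}}_y \cong \Delta^{\mathrm{eq}}_{xy}$ and $\nabla^{\mathrm{eq}}_x \ast \nabla^{\mathrm{eq}}_y \cong \nabla^{\mathrm{eq}}_{xy}$ whenever $\ell(xy) = \ell(x) + \ell(y)$; and $\Delta^{\mathrm{eq}}_s \ast \nabla^{\mathrm{eq}}_s \cong \nabla^{\mathrm{eq}}_s \ast \Delta^{\mathrm{eq}}_s \cong \delta$.

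The heart of the argument is a short case analysis, obtained by applying the triangulated functor $(-) \ast \Delta^{\mathrm{eq}}_w$ (resp.\ $(-) \ast \nabla^{\mathrm{eq}}_w$) to the appropriate one of these two exact sequences for $\cT_s$. For $\xi_s\Delta_w$ with $sw > w$, convolving the first sequence with $\Delta^{\mathrm{eq}}_w$ produces a distinguished triangle $\Delta_{sw} \to \xi_s\Delta_w \to \Delta_w\la1\ra \to$; for $sw < w$, convolving instead the \emph{second} sequence with $\Delta^{\mathrm{eq}}_w$, writing $\Delta^{\mathrm{eq}}_w \cong \Delta^{\mathrm{eq}}_s \ast \Delta^{\mathrm{eq}}_{sw}$ and using $\nabla^{\mathrm{eq}}_s \ast \Delta^{\mathrm{eq}}_s \cong \delta$, produces a triangle $\Delta_w\la-1\ra \to \xi_s\Delta_w \to \Delta_{sw} \to$. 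The two cases for $\xi_s\nabla_w$ are mirror images: use the second sequence when $sw > w$ to get $\nabla_w\la-1\ra \to \xi_s\nabla_w \to \nabla_{sw} \to$, and the first when $sw < w$ (together with $\Delta^{\mathrm{eq}}_s \ast \nabla^{\mathrm{eq}}_s \cong \delta$) to get $\nabla_{sw} \to \xi_s\nabla_w \to \nabla_w\la1\ra \to$. In every case the two outer terms are standard (resp.\ costandard) objects, hence perverse; since a distinguished triangle in $\Dmix(\UGB)$ with perverse outer terms has perverse middle term and is a short exact sequence in $\Pmix(\UGB)$, this yields the perversity assertion and the asserted associated graded of the standard (resp.\ costandard) filtration.

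I do not expect a genuine obstacle here---the statement is flagged as ``proved in the same way'' as~\cite[Lemma~5.21]{AR-I}---so the real work is organizational: choosing, in each of the four cases, the filtration of $\cT_s$ that keeps both outer terms of the resulting triangle perverse (this is precisely where the cancellations $\Delta^{\mathrm{eq}}_s \ast \nabla^{\mathrm{eq}}_s \cong \nabla^{\mathrm{eq}}_s \ast \Delta^{\mathrm{eq}}_s \cong \delta$ are used), and carefully tracking the Tate twists through $\Delta_w = \delta \ast \Delta^{\mathrm{eq}}_w$, the associativity of the module action, and the quoted convolution identities, all of which are available from~\cite{Mak}.
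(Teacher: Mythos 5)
Your argument is correct, and since the paper's own ``proof'' is only the citation of \cite[Lemma~5.21]{AR-I} (``proved in the same way''), the real content is how one transports that argument into the present Soergel-theoretic setting; your route does this cleanly but deserves a brief comparison. In the geometric setting one can prove the corresponding statement by push--pull along $G/B \to G/P^s$ and base change over Schubert cells; that route is not literally available here, because $\xi_s$ is a \emph{left} wall-crossing while the $\pi^s$-functors of~\S\ref{sss:ssbim} live on the other side (indeed $\xi_s$ \emph{commutes} with them by~\eqref{eq:wall-crossing-push-pull}). Your replacement---reduce to $\xi_s\Delta_w \cong \cT_s \ast \Delta^{\mathrm{eq}}_w$ via Proposition~\ref{prop:wall-crossing}(3) and~\eqref{eq:wall-crossing-conv}, then convolve the two short exact sequences $0 \to \Delta_s \to \cT_s \to \delta\la1\ra \to 0$ and $0 \to \delta\la-1\ra \to \cT_s \to \nabla_s \to 0$ with $\Delta^{\mathrm{eq}}_w$ (resp.~$\nabla^{\mathrm{eq}}_w$), using $\Delta^{\mathrm{eq}}_x \ast \Delta^{\mathrm{eq}}_y \cong \Delta^{\mathrm{eq}}_{xy}$, $\nabla^{\mathrm{eq}}_x \ast \nabla^{\mathrm{eq}}_y \cong \nabla^{\mathrm{eq}}_{xy}$ when lengths add, and $\Delta^{\mathrm{eq}}_s \ast \nabla^{\mathrm{eq}}_s \cong \nabla^{\mathrm{eq}}_s \ast \Delta^{\mathrm{eq}}_s \cong \delta$---is the natural convolution-theoretic form of the same argument, and your four triangles, the sub/quotient normalization of the two filtrations of $\cT_s$ (which one can check directly against the explicit three-term complex recalled in~\S\ref{ss:monodromy-idea}), and all Tate twists come out matching the statement; the perversity claim then follows, as you say, from the long exact sequence of perverse cohomology, since standard and costandard objects and their Tate twists are perverse. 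The only genuine dependencies are the facts you quote from~\cite{Mak}: that $\For$ sends equivariant standard/costandard objects to $\Delta_w$, $\nabla_w$, and the convolution identities above (the analogues of the corresponding convolution lemmas of~\cite{AR-II}, i.e.\ invertibility of the Rouquier-type complexes and the braid-type composition rule). These are inputs your proof needs and should be cited precisely, but they are exactly the kind of statements that framework supplies, so I see no gap.
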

For any expression $\uw = s_1 \ldots s_k$, define the \emph{Bott--Samelson tilting object}
\[
 \cT_\uw := \xi_{s_1} \cdots \xi_{s_k}(\delta).
\]
Lemma~\ref{lem:translation-on-standard-and-costandard} shows that $\cT_\uw \in \Tmix(\UGB)$, and also implies the following Bott--Samelson characterization of indecomposable tilting objects, analogous to the one for Soergel modules (Proposition~\ref{prop:sbim-bs-characterization}).
\begin{prop} \label{prop:tilting-bs-characterization}
 For any reduced expression $\uw$ of $w \in W$, $\cT_w$ can be identified with the unique indecomposable direct summand of $\cT_\uw$ that does not occur as a direct summand of $\cT_\ux$ for any expression $\ux$ with $\ell(\ux) < \ell(\uw)$.
\end{prop}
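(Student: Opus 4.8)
The plan is to imitate the proof of the Bott--Samelson characterization of Soergel modules (Proposition~\ref{prop:sbim-bs-characterization}), taking Lemma~\ref{lem:translation-on-standard-and-costandard} as essentially the only input. I would argue by induction on $\ell(\uw)$, the case $\ell(\uw)=0$ being trivial since $\cT_\varnothing = \delta = \cT_e$. Since $\cT_\uw \in \Tmix(\UGB)$ and the latter is Krull--Schmidt with indecomposables $\{\cT_y \mid y \in W\}$ up to Tate twist, fix a decomposition $\cT_\uw \cong \bigoplus_j \cT_{y_j}\la m_j\ra$ into indecomposables. The goal is to show that exactly one index $j$ has $(y_j,m_j)=(w,0)$, that every other summand is $\cT_y\la m\ra$ with $y<w$, and that no Tate twist of $\cT_w$ is a summand of any $\cT_\ux$ with $\ell(\ux)<\ell(\uw)$.

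Two combinatorial facts, both proved by induction on the length of the expression using Lemma~\ref{lem:translation-on-standard-and-costandard}(1), do the work. First, a \emph{support bound}: for an arbitrary expression $\ux$, every standard object $\Delta_z\la n\ra$ occurring in a standard filtration of $\cT_\ux = \xi_{t_1}\cdots\xi_{t_m}(\delta)$ has $\ell(z)\le\ell(\ux)$, and if $\ux$ is a reduced expression of $w$ then in fact $z\le w$. For the inductive step one uses that $\xi_{t_1}$ is exact and, by Lemma~\ref{lem:translation-on-standard-and-costandard}(1), carries each standard subquotient $\Delta_v$ of $\cT_{t_2\cdots t_m}$ to an object whose standard subquotients lie among $\Delta_{t_1 v}$ and $\Delta_v$; now $\ell(t_1 v)\le\ell(v)+1$, and in the reduced case the lifting property of the Bruhat order yields $t_1 v\le w$. (All these graded multiplicities are well defined, e.g.\ because $(\cT:\Delta_z\la n\ra)_{\mathrm{gr}} = \gdim\HOM(\cT,\nabla_z\la n\ra)$ using $\HOM(\Delta_a,\nabla_b)=\delta_{ab}\bk$ in degree $0$ and $\Ext^{>0}(\Delta_a,\nabla_b)=0$ in the graded highest weight category $\Pmix(\UGB)$.) Second, a \emph{multiplicity computation}: for $\uw = s_1\cdots s_k$ reduced of $w$, the standard object $\Delta_w$ occurs in $\cT_\uw$ with graded multiplicity exactly $1$ and trivial Tate twist. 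Indeed, writing $\uw' = s_2\cdots s_k$ (reduced of $w' := s_1 w$, of length $k-1$), in $\cT_\uw = \xi_{s_1}\cT_{\uw'}$ a subquotient $\Delta_w$ can come only from $\xi_{s_1}\Delta_v$ with $v\in\{w, s_1 w\}$; by the support bound $\ell(v)\le k-1$, so $v=w'$; and since $s_1 w' = w$ with $\ell(w)>\ell(w')$, Lemma~\ref{lem:translation-on-standard-and-costandard}(1) produces $\Delta_w$ precisely once and with trivial twist. As $(\cT_{\uw'}:\Delta_{w'})=1$ by induction, this gives the claim.

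Putting this together: in $\cT_\uw \cong \bigoplus_j \cT_{y_j}\la m_j\ra$, each $\Delta_{y_j}$ occurs in $\cT_{y_j}$, hence in $\cT_\uw$, so the support bound gives $y_j\le w$. If $(\cT_{y_j}:\Delta_w)\ne 0$, then the support condition for the indecomposable tilting sheaf $\cT_{y_j}$ (whose standard subquotients are the $\Delta_x$ with $x\le y_j$) forces $w\le y_j$, whence $y_j=w$; its normalization then gives $(\cT_w:\Delta_w)=1$ in degree $0$. Therefore the graded multiplicity of $\Delta_w$ in $\cT_\uw$ is a sum of Laurent monomials in $v$, one for each index $j$ with $y_j=w$, the monomial being $1$ exactly when $m_j=0$; since this sum equals $1$ by the multiplicity computation, there is a unique such index $j_0$ and $m_{j_0}=0$. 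Hence $\cT_w$ is a summand of $\cT_\uw$ with trivial twist and multiplicity one, and every other indecomposable summand is $\cT_y\la m\ra$ with $y<w$. For such $y$ one has $\ell(y)<k$, so a reduced expression $\ux$ of $y$ satisfies $\ell(\ux)<\ell(\uw)$ and, by the inductive hypothesis applied to $\ux$, $\cT_y$ is a summand of $\cT_\ux$. Finally, no Tate twist of $\cT_w$ can be a summand of $\cT_\ux$ with $\ell(\ux)<k$, for then $\Delta_w$ would occur in $\cT_\ux$ and the support bound would force $\ell(w)\le\ell(\ux)<k=\ell(w)$.

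The argument is essentially formal once Lemma~\ref{lem:translation-on-standard-and-costandard} is in hand, exactly as for Soergel modules and parity sheaves. I expect the only real friction to be the bookkeeping of Tate twists through the functors $\xi_s$ in the two inductions of the second paragraph, together with the (standard) use of the lifting property of the Bruhat order in the refined support bound; no input beyond Lemma~\ref{lem:translation-on-standard-and-costandard} and the already-established graded highest weight structure on $\Pmix(\UGB)$ should be required.
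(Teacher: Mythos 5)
Your proof is correct and is essentially the paper's intended argument: the paper gives no details, simply asserting that Lemma~\ref{lem:translation-on-standard-and-costandard} implies the characterization, and your support-bound/multiplicity induction through the standard filtrations, combined with the support-and-normalization characterization of $\cT_w$ and Krull--Schmidt in $\Tmix(\UGB)$, is exactly the natural fleshing-out of that one-line proof, parallel to the parity/Soergel module case. The only caveat is that ``occurs as a direct summand'' should be read up to Tate twist (consistent with the paper's classification of indecomposables up to twist), which your argument handles, including the stronger statement that no twist of $\cT_w$ appears in any shorter Bott--Samelson tilting object.
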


We also need a tilting analogue of the Soergel Hom formula \eqref{eq:Soergel-module-hom-formula}. In general, for a graded highest weight category $(\cA, \la1\ra)$ indexed by $(\scS, \le)$, let $\cF_\Delta$ (resp.~$\cF_\nabla$) denote the full subcategory consisting of standardly (resp.~costandardly) filtered objects. For $X \in \cF_\Delta$, let $(X \colon \Delta_s\la n \ra)$ denote the multiplicity of $\Delta_s\la n \ra$ in any standard filtration of $X$. For $Y \in \cF_\nabla$, similarly write $(Y \colon \nabla_s\la n \ra)$. Let $X \in \cF_\Delta$ and $Y \in \cF_\nabla$. Since $\Ext^k(\Delta_s, \nabla_t\la n \ra) = 0$ for any $s, t \in \scS$, $n \in \Z$, $k > 0$, we get
\[
 \dim\Hom_\cA(X, Y) = \sum_{s \in \scS, n \in \Z} (X : \Delta_s\la n \ra)(Y : \nabla_s\la n \ra)
\]
by inducting on the length of a standard (resp.~costandard) filtration of $X$ (resp.~$Y$). It follows that
\begin{align} \label{eq:ghw-gdim}
 \gdim \bigoplus_{n \in \Z} \Hom_\cA(X, Y\la n \ra) = \sum_{\substack{s \in \scS \\ n_1, n_2 \in \Z}} (X : \Delta_s\la n_1 \ra)(Y : \nabla_s \la n_2 \ra)v^{n_1 - n_2}.
\end{align}

Now consider $\cA = \Pmix(\UGB)$. Recall the Hecke algebra $\cH_W$ (see~\S\ref{ss:coxeter} for our normalization) and the pairing $\la -, - \ra$ (see~\S\ref{sss:hom-formula-equivariant-formality}). Define
\begin{align*}
 \ch_\Delta&\colon \Ob(\cF_\Delta) \to \cH_W\colon \cF \mapsto \sum_{w \in W, n \in \Z} (\cF : \Delta_w\la n \ra)v^nH_w, \\
 \ch_\nabla&\colon \Ob(\cF_\nabla) \to \cH_W\colon \cG \mapsto \sum_{w \in W, n \in \Z} (\cG : \nabla_w\la n \ra)v^{-n}H_w.
\end{align*}
Then \eqref{eq:ghw-gdim} may be restated as follows: for $\cF \in \cF_\Delta$ and $\cG \in \cF_\nabla$, we have
\begin{align} \label{eq:ghw-hom}
 \gdim \bigoplus_{n \in \Z} \Hom(\cF, \cG\la n \ra) = \la\ch_\Delta(\cF), \ch_\nabla(\cG)\ra.
\end{align}
Lemma~\ref{lem:translation-on-standard-and-costandard} implies that each $\xi_s$ restricts to an endofunctor on $\cF_\Delta$ (resp.~$\cF_\nabla$), and for $\cF \in \cF_\Delta$ and $\cG \in \cF_\nabla$, we have
\begin{equation} \label{eq:wall-crossing-ch}
 \ch_\Delta(\xi_s\cF) = \uH_s \ch_\Delta(\cF), \quad \ch_\nabla(\xi_s\cG) = \uH_s \ch_\nabla(\cG).
\end{equation}
Given expressions $\ux, \uy$, it follows from \eqref{eq:ghw-hom} and \eqref{eq:wall-crossing-ch} that
\begin{equation} \label{eq:tilting-hom-formula}
 \gdim \bigoplus_{n \in \Z} \Hom(\cT_\ux, \cT_\uy\la n \ra) = (\ch_\Delta(\cT_\ux), \ch_\nabla(\cT_\uy)) = \la\uH_\ux, \uH_\uy\ra.
\end{equation}

\section{Koszul duality}
\label{sec:koszul-duality}

Let $(W, \fh)$ be a reflection faithful realization. In this section, we assume in addition that $W$ is finite. Denote the longest element of $W$ by $w_0$.

\subsection{Preliminaries}
\label{ss:koszul-duality-preliminaries}
We collect a few results about $\Pmix(\UGB)$.

As in~\cite{AR-II}, the following result may be proved by imitating the argument of~\cite[\S2.1]{BBM} or \cite[Lemma~4.4.7]{BY}.
\begin{lem}[cf.~\cite{AR-II}, Lemma~4.9] \label{lem:delta-in-standard-and-costandard}
 Let $w \in W$.
 \begin{enumerate}
  \item There exists an embedding $\delta \la-\ell(w)\ra \hookrightarrow \Delta_w$ whose cokernel has no composition factor of the form $\delta\la n \ra$.
  \item There exists a surjection $\nabla_w \twoheadrightarrow \delta\la \ell(w) \ra$ whose kernel has no composition factor of the form $\delta\la n \ra$.
 \end{enumerate}
\end{lem}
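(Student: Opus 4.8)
The plan is to prove (1); statement (2) is then its image under the Verdier-type duality on $\Pmix(\UGB)$ that exchanges $\Delta_w$ with $\nabla_w$, sends $\la n\ra$ to $\la -n\ra$, and fixes $\delta = \IC_e$ (equivalently, one may rerun the argument below with standard objects replaced by costandard ones, using Lemma~\ref{lem:translation-on-standard-and-costandard}(2) in place of (1)). I would first isolate the multiplicity fact that $[\Delta_w : \delta\la n\ra]$ equals $1$ if $n = -\ell(w)$ and $0$ otherwise, i.e.\ that $\delta\la-\ell(w)\ra$ is the \emph{unique} composition factor of $\Delta_w$ of the form $\delta\la n\ra$. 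Granting this, (1) is reduced to producing a \emph{nonzero} morphism $\delta\la-\ell(w)\ra \to \Delta_w$ in $\Pmix(\UGB)$: such a morphism is automatically injective (its source is simple), and its cokernel then has no composition factor $\delta\la n\ra$, since the single such factor of $\Delta_w$ has been used up.

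For the multiplicity fact I would argue as in \cite[Lemma~4.9]{AR-II}. The ``full'' object $\cE_{w_0} = \cT_{w_0} = \IC_{w_0}$ is simultaneously a parity, perverse, and (indecomposable) tilting object whose standard and costandard filtrations are multiplicity-free. Transporting this multiplicity-freeness through the Ringel self-duality of $\Dmix(\UGB)$ constructed in \cite{Mak} shows that the projective cover $\cP_e$ of $\delta$ has a standard filtration in which every $\Delta_y$ occurs exactly once; BGG reciprocity for the graded highest weight category $\Pmix(\UGB)$, combined again with the duality, then forces $[\Delta_w : \delta\la n\ra]$ to be a single monomial in $n$, and keeping track of Tate twists pins that monomial at $n = -\ell(w)$. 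I expect this to be the step requiring the most care: unlike what follows it is not formal, and it rests on the geometric input that the ``whole-space'' object $\cE_{w_0}$ is perverse. (One could instead try to compute the costalk $i_e^!\Delta_w$ directly from the recollement of \cite{Mak}, but this does not look any easier.)

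It remains to construct a nonzero morphism $\delta\la-\ell(w)\ra \to \Delta_w$, which I would do by induction on $\ell(w)$, in the spirit of \cite{BBM,BY}. For $w = e$ take $\id_\delta$. For $w = s \in S$, there is a triangle $\delta\la-1\ra \to \Delta_s \to \cE_s$ in $\Dmix(\UGB)$ (the standard comparison of $\Delta_s$ with the parity sheaf $\cE_s$, whose cone is supported on $X_e$); since $\cE_s = \IC_s$ is perverse this is a short exact sequence in $\Pmix(\UGB)$, and its first map is nonzero. For the inductive step, write $w' = sw$ with $\ell(w') = \ell(w)+1$. By Lemma~\ref{lem:translation-on-standard-and-costandard}(1), $\xi_s\Delta_w$ is perverse with a two-step standard filtration in which $\Delta_{w'}$ is a subobject and the quotient is $\Delta_w\la1\ra$; similarly $\xi_s\delta = \xi_s\Delta_e$ contains $\Delta_s$ as a subobject. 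Applying the exact functor $\xi_s$ (Proposition~\ref{prop:wall-crossing}(1),(2)) to the embedding $\delta\la-\ell(w)\ra \hookrightarrow \Delta_w$ supplied by the inductive hypothesis, and precomposing with the chain
\[
 \delta\la-\ell(w')\ra \hookrightarrow \Delta_s\la-\ell(w)\ra \hookrightarrow \xi_s\bigl(\delta\la-\ell(w)\ra\bigr)
\]
(the first inclusion being the $w=s$ case shifted by $\la-\ell(w)\ra$, using that $\xi_s$ commutes with $\la1\ra$), one obtains an injection $j\colon \delta\la-\ell(w')\ra \hookrightarrow \xi_s\Delta_w$. Composing $j$ with the projection $\xi_s\Delta_w \twoheadrightarrow \Delta_w\la1\ra$ gives an element of $\Hom(\delta, \Delta_w\la\ell(w)+2\ra)$, and this space vanishes by the multiplicity fact (a composition factor $\delta\la-\ell(w)-2\ra$ of $\Delta_w$ would be needed, but $-\ell(w)-2 \neq -\ell(w)$). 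Hence $j$ factors through the subobject $\Delta_{w'} \subseteq \xi_s\Delta_w$, yielding the required injection $\delta\la-\ell(w')\ra \hookrightarrow \Delta_{w'}$ and completing the induction.
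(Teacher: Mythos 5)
Your overall architecture (reduce to a multiplicity statement $[\Delta_w:\delta\la n\ra]=\delta_{n,-\ell(w)}$ plus the existence of a nonzero map $\delta\la-\ell(w)\ra\to\Delta_w$, the latter built by induction using $\xi_s$ and Lemma~\ref{lem:translation-on-standard-and-costandard}) is close in spirit to the BBM/BY-style argument the paper invokes, and the inductive Part~B is essentially sound. But the argument you give for the multiplicity fact contains a genuine error: the identification $\cE_{w_0}=\cT_{w_0}=\IC_{w_0}$ is false. Already for $W=\{e,s\}$ (geometrically, $\mathbb{P}^1$), $\cE_s=\IC_s$ is the simple ``constant sheaf'' object, which admits no standard filtration at all (its class is $[\Delta_s]+v[\Delta_e]$ but it is simple), whereas $\cT_s$ is the length-three big tilting, realized in the paper as the three-term complex $\overline{R}\{-1\}\to\overline{B_s}\to\overline{R}\{1\}$; in general $\cT_{w_0}\not\cong\cE_{w_0}$ in $\Dmix(\UGB)$ (indeed Theorem~\ref{thm:mkd} exchanges tiltings and parities only across the two dual categories). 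There is also a circularity problem in the direction you run BGG reciprocity: in the paper, the standard-filtration multiplicities of $\cP_e$ (Lemma~\ref{lem:standard-filtration-of-projective-cover-of-delta}) are \emph{deduced} from graded BGG reciprocity together with Lemma~\ref{lem:delta-in-standard-and-costandard}(2); to go the other way you would need an independent computation of $(\cP_e:\Delta_w\la n\ra)$, equivalently of $(\cT_{w_0}:\Delta_w\la n\ra)$ via Ringel duality, and your only justification for that is the false identity above. So Part~A, on which both the injectivity-to-cokernel conclusion and the vanishing $\Hom(\delta,\Delta_w\la\ell(w)+2\ra)=0$ in your induction rest, is unproved as written.

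The gap is repairable without any new input, by folding the multiplicity statement into the induction itself rather than proving it beforehand: if the full statement (1) is known for $w$, then exactness of $\xi_s$ gives $[\xi_s\Delta_w:\delta\la n\ra]=\sum_y\sum_m[\Delta_w:\IC_y\la m\ra]\,[\xi_s\IC_y\la m\ra:\delta\la n\ra]$, and for $y\neq e$ the object $\xi_s\IC_y$ has no composition factor $\delta\la n\ra$ (the $\pi^{t*}$-argument that appears in the proof of Proposition~\ref{prop:V-functor-properties}), while $\xi_s\delta=\cT_s$ contributes exactly $\delta\la-\ell(w)\pm1\ra$; comparing with the sub/quotient filtration $\Delta_{sw}\subset\xi_s\Delta_w\twoheadrightarrow\Delta_w\la1\ra$ then yields both the required $\Hom$-vanishing and the multiplicity statement for $sw$, after which your factorization argument completes the step. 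As submitted, however, the proposal does not establish its key Part~A.
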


Fix once and for all a projective cover $\pi\colon \cP_e \to \delta$ of the skyscraper.
\begin{lem} \label{lem:standard-filtration-of-projective-cover-of-delta}
 Let $w \in W$. We have
 \[
  (\cP_e : \Delta_w\la n \ra) = \begin{cases}
                                 1 &\mbox{if } n = -\ell(w); \\
                                 0 &\mbox{otherwise.}
                                \end{cases}
 \]
\end{lem}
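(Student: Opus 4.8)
The plan is to deduce the formula from graded BGG reciprocity together with Lemma~\ref{lem:delta-in-standard-and-costandard}(2), which provides the only input of any substance. Since $W$ is finite, $\Pmix(\UGB)$ is a graded highest weight category with finitely many simples and enough projectives, so $\cP_e$ lies in $\cF_\Delta$ and all of the formalism recalled around \eqref{eq:ghw-gdim} applies. Concretely, I would first specialize \eqref{eq:ghw-gdim} to $Y = \nabla_w$ (for which $(\nabla_w : \nabla_s\la n\ra) = \delta_{s,w}\delta_{n,0}$) and $X = \cP_e$, obtaining
\[
 \sum_{n \in \Z}(\cP_e : \Delta_w\la n\ra)\,v^n \;=\; \gdim\bigoplus_{n \in \Z}\Hom(\cP_e, \nabla_w\la n\ra).
\]

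Next I would compute the right-hand side using that $\cP_e$ is the projective cover of the simple object $\delta$ and $\End(\delta) = \bk$: the functor $\Hom(\cP_e, -)$ is exact, and since any nonzero map from $\cP_e$ to a simple object is surjective and hence factors through the maximal semisimple quotient of $\cP_e$, which is $\delta$, one has $\dim\Hom(\cP_e, \cL) = 1$ if $\cL \cong \delta$ and $0$ for every other simple $\cL$ of $\Pmix(\UGB)$. Adding over composition factors gives $\dim\Hom(\cP_e, M) = [M:\delta]$ for all $M$, so
\[
 \gdim\bigoplus_{n}\Hom(\cP_e, \nabla_w\la n\ra) \;=\; \sum_{n}[\nabla_w\la n\ra : \delta]\,v^n \;=\; \sum_{m}[\nabla_w : \delta\la m\ra]\,v^{-m}.
\]

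Finally I would invoke Lemma~\ref{lem:delta-in-standard-and-costandard}(2): the surjection $\nabla_w \twoheadrightarrow \delta\la\ell(w)\ra$ has kernel with no composition factor of the form $\delta\la m\ra$, so $[\nabla_w : \delta\la m\ra]$ is $1$ for $m = \ell(w)$ and $0$ otherwise; hence the last display equals $v^{-\ell(w)}$, and comparing coefficients with the first display yields $(\cP_e : \Delta_w\la n\ra) = 1$ for $n = -\ell(w)$ and $0$ otherwise. There is no serious obstacle here: the argument is a short combination of standard highest-weight bookkeeping with an already-proved lemma, and the only point needing care is the direction of the Tate twist in $[M\la n\ra : \delta] = [M : \delta\la -n\ra]$ together with the substitution $m = -n$.
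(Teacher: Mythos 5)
Your argument is correct and is essentially the paper's proof: the paper deduces the formula from graded BGG reciprocity \cite[Theorem~A.3]{AR-II} together with Lemma~\ref{lem:delta-in-standard-and-costandard}(2), and you combine the same two ingredients, merely unpacking the needed instance of reciprocity by hand via \eqref{eq:ghw-gdim} with $X = \cP_e$, $Y = \nabla_w$ and the identity $\dim\Hom(\cP_e, M) = [M:\delta]$ coming from the projective-cover property. The bookkeeping with the Tate twist ($[\nabla_w\la n\ra : \delta] = [\nabla_w : \delta\la -n\ra]$) is handled correctly, so no changes are needed.
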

\begin{proof}
 This follows from graded BGG reciprocity~\cite[Theorem~A.3]{AR-II} and\\Lemma~\ref{lem:delta-in-standard-and-costandard}(2).
\end{proof}

\begin{lem} \label{lem:delta-in-translated-projective-cover-of-delta}
 Let $s \in S$. Then $[\xi_s\cP_e\la-1\ra : \delta] = 1$.
\end{lem}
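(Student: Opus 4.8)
The plan is to express the composition multiplicity $[\xi_s\cP_e\la-1\ra : \delta]$ purely in terms of standard multiplicities, and then to read those off from the character map $\ch_\Delta$ of \S\ref{ss:tilting}. The first input I would extract is a consequence of Lemma~\ref{lem:delta-in-standard-and-costandard}(1): since $\delta\la-\ell(w)\ra \hookrightarrow \Delta_w$ has cokernel with no composition factor of the form $\delta\la n\ra$, we get $[\Delta_w : \delta\la n\ra] = \delta_{n, -\ell(w)}$ for all $w \in W$, $n \in \Z$. Hence for any standardly filtered $\cG \in \cF_\Delta$,
\[
 [\cG : \delta] = \sum_{w \in W,\, n \in \Z} (\cG : \Delta_w\la n\ra)\,[\Delta_w\la n\ra : \delta] = \sum_{w \in W} (\cG : \Delta_w\la \ell(w)\ra).
\]
Since $\cP_e$ is standardly filtered (Lemma~\ref{lem:standard-filtration-of-projective-cover-of-delta}), applying $\xi_s$ to a standard filtration and using Lemma~\ref{lem:translation-on-standard-and-costandard}(1) together with the exactness of $\xi_s$ shows that $\xi_s\cP_e\la-1\ra$ is again standardly filtered (in particular perverse), so the displayed identity applies to it.

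Next I would compute $\ch_\Delta(\xi_s\cP_e\la-1\ra)$. By Lemma~\ref{lem:standard-filtration-of-projective-cover-of-delta}, $\ch_\Delta(\cP_e) = \sum_{w \in W} v^{-\ell(w)} H_w = v^{-\ell(w_0)}\uH_{w_0}$, using the classical fact that $\uH_{w_0} = \sum_{w \in W} v^{\ell(w_0) - \ell(w)} H_w$ (i.e.\ $P_{x, w_0} = 1$ for all $x$). Since $\xi_s$ commutes with $\la1\ra$ (Proposition~\ref{prop:wall-crossing}(1)) and $\ch_\Delta(\cF\la1\ra) = v\,\ch_\Delta(\cF)$, formula \eqref{eq:wall-crossing-ch} together with the identity $\uH_s\uH_{w_0} = (v + v^{-1})\uH_{w_0}$ (valid because $sw_0 < w_0$) gives
\[
 \ch_\Delta(\xi_s\cP_e\la-1\ra) = v^{-1}\,\uH_s\,\ch_\Delta(\cP_e) = v^{-1-\ell(w_0)}(v+v^{-1})\uH_{w_0} = (1 + v^{-2})\sum_{w \in W} v^{-\ell(w)} H_w.
\]
Writing the left side as $\sum_{w} c_w(v) H_w$, this yields $c_w(v) = v^{-\ell(w)} + v^{-2-\ell(w)}$, i.e.\ the multiplicity $(\xi_s\cP_e\la-1\ra : \Delta_w\la m\ra)$ is the coefficient of $v^m$ in $c_w(v)$.

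Combining the two steps, $[\xi_s\cP_e\la-1\ra : \delta] = \sum_{w \in W} (\xi_s\cP_e\la-1\ra : \Delta_w\la\ell(w)\ra)$, and since the exponents $-\ell(w)$ and $-2-\ell(w)$ occurring in $c_w$ are both negative whenever $\ell(w) \ge 1$, only $w = e$ contributes, giving the coefficient of $v^0$ in $1 + v^{-2}$, which is $1$. This proves the lemma. I do not expect a substantial obstacle here: the only delicate point is the bookkeeping in the first paragraph — that $\delta$ sits at the "bottom" of each standard object (Lemma~\ref{lem:delta-in-standard-and-costandard}(1)) so that its composition multiplicity in a $\Delta$-filtered object is governed solely by the $\la\ell(w)\ra$-shifted standard multiplicities — after which everything reduces to the elementary Hecke-algebra computation $v^{-1}\uH_s\uH_{w_0} = (1 + v^{-2})\cdot v^{\,0}$-part bookkeeping above.
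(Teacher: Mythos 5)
Your proof is correct and follows essentially the same route as the paper's: the paper's argument is precisely to compute the standard filtration of $\xi_s\cP_e\la-1\ra$ from Lemma~\ref{lem:standard-filtration-of-projective-cover-of-delta} and Lemma~\ref{lem:translation-on-standard-and-costandard}, and then count $\delta$-factors via Lemma~\ref{lem:delta-in-standard-and-costandard}, which is exactly what you do, merely organizing the bookkeeping through $\ch_\Delta$ and~\eqref{eq:wall-crossing-ch}. The detour through $\uH_{w_0}$ (i.e.\ $P_{x,w_0}=1$ and $\uH_s\uH_{w_0}=(v+v^{-1})\uH_{w_0}$) is correct but unnecessary, since applying Lemma~\ref{lem:translation-on-standard-and-costandard}(1) termwise to the pieces $\Delta_w\la-\ell(w)\ra$ gives the same multiplicities directly.
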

\begin{proof}
 Use Lemma~\ref{lem:standard-filtration-of-projective-cover-of-delta} and Lemma~\ref{lem:translation-on-standard-and-costandard} to find the associated graded of the standard filtration of $\xi_s\cP_e\la-1\ra$, then use Lemma~\ref{lem:delta-in-standard-and-costandard}.
\end{proof}

\subsection{$\bV$ functor}
\label{ss:V-functor}
Define $\bV$ as the composition
\[
 \Dmix(\UGB) \xrightarrow{\bigoplus_{n \in \Z} \Hom(\cP_e, (-)\la n \ra)} \gmod\lh \left(\bigoplus_{n \in \Z} \Hom(\cP_e, \cP_e\la n \ra)\right) \xrightarrow{\mu^*_{\cP_e}} R^\vee\lh\gmod,
\]
where $\mu^*_{\cP_e}$ denotes pullback under the monodromy map
\[
 \mu_{\cP_e}\colon R^\vee \to \bigoplus_{n \in \Z} \Hom(\cP_e, \cP_e\la n \ra)
\]
constructed in~\S\ref{sec:monodromy}. Recall the functors
\[
 \theta_s := B_s^\vee \otimes_{R^\vee} (-)\colon R^\vee\lh\gmod \to R^\vee\lh\gmod, \quad B_s^\vee := (R^\vee) \otimes_{(R^\vee)^s} R^\vee \{1\}
\]
``generating'' $\Parity(\BGUvee)$.
\begin{prop} \label{prop:V-functor-properties}
 The $\bV$ functor is cohomological (transforms a distinguished triangle into a long exact sequence) and satisfies $\bV \circ \la1\ra = \{1\} \circ \bV$. Moreover, for any $s \in S$, there is a natural isomorphism $\theta_s \circ \bV \cong \bV \circ\xi_s$.
\end{prop}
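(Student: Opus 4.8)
These are formal. The functor $\cF \mapsto \bigoplus_{n}\Hom_{\Dmix(\UGB)}(\cP_e,\cF\la n\ra)$ is a direct sum of corepresentable functors, hence sends distinguished triangles to long exact sequences, and $\mu^*_{\cP_e}$ is the exact restriction-of-scalars functor along the ring map $\mu_{\cP_e}\colon R^\vee \to \bigoplus_n\Hom(\cP_e,\cP_e\la n\ra)$; so $\bV$ is cohomological. For the Tate twist, $\bigoplus_n\Hom(\cP_e,\cF\la1\ra\la n\ra)=\bigoplus_n\Hom(\cP_e,\cF\la n\ra)$ with the grading reindexed by a shift, which is precisely $\{1\}$; this is $R^\vee$-linear because $\mu_{-,f}$, as an element of the graded center, commutes with $\la1\ra$ up to the sign $(-1)^{\deg f}$, and $\deg f$ is even since $R^\vee=\Sym_\bk(\fh)$ lives in even degrees. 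Hence $\bV\circ\la1\ra=\{1\}\circ\bV$.

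\textbf{Part (3): the natural isomorphism as graded $\bk$-modules.} The plan is to compute $\bV(\xi_s\cF)$ via an adjunction together with the structure of $\xi_s\cP_e$. First, $\xi_s$ restricts to an exact endofunctor of the finite graded highest weight category $\Pmix(\UGB)$ that preserves $\cF_\Delta$ and $\cF_\nabla$ (Lemma~\ref{lem:translation-on-standard-and-costandard}) and commutes with $[1]$; as in~\cite{BY} --- or directly from the Frobenius structure on $B_s$ together with Proposition~\ref{prop:wall-crossing}(5) --- it is biadjoint to itself up to a Tate twist, and the twist is pinned down to be trivial by evaluating the adjunction isomorphism on $\delta=\Delta_e=\nabla_e$ (using Lemma~\ref{lem:delta-in-standard-and-costandard}). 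This gives a natural isomorphism $\bV(\xi_s\cF)=\bigoplus_n\Hom(\cP_e,\xi_s\cF\la n\ra)\cong\bigoplus_n\Hom(\xi_s\cP_e,\cF\la n\ra)$. Second, $\xi_s$ preserves projectives (being exact with exact adjoint), so $\xi_s\cP_e$ is projective, and by Lemma~\ref{lem:standard-filtration-of-projective-cover-of-delta} and Lemma~\ref{lem:translation-on-standard-and-costandard} it has the same graded standard multiplicities as $\cP_e\la1\ra\oplus\cP_e\la-1\ra$ (checked summand-wise, pairing $w$ with $sw$; this is also consistent with Lemma~\ref{lem:delta-in-translated-projective-cover-of-delta}). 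Since a projective object of a graded highest weight category is determined up to isomorphism by its graded standard multiplicities, $\xi_s\cP_e\cong\cP_e\la1\ra\oplus\cP_e\la-1\ra$. Combining, we obtain a natural isomorphism of graded $\bk$-modules $\bV(\xi_s\cF)\cong\bV(\cF)\{1\}\oplus\bV(\cF)\{-1\}$, which also underlies $\theta_s\bV(\cF)=R^\vee\otimes_{(R^\vee)^s}\bV(\cF)\{1\}$.

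\textbf{Part (3): matching the $R^\vee$-module structure.} Since $R^\vee$ is free of rank two over $(R^\vee)^s$ on $\{1,\alpha_s^\vee\}$, it suffices to match the actions of $(R^\vee)^s$ and of $\alpha_s^\vee$. On $\theta_s\bV(\cF)$ the subalgebra $(R^\vee)^s$ acts diagonally (being central in $R^\vee$); on $\bV(\xi_s\cF)$, $f\in(R^\vee)^s$ acts by postcomposition with $\mu_{\xi_s\cF,f}=\xi_s\mu_{\cF,f}$ (Proposition~\ref{prop:wall-crossing}(4)), and a naturality argument --- using that $\mu$ is central for $\id$ and that the adjunction (co)unit commutes with $\la1\ra$ --- identifies this, under the isomorphisms above, with the diagonal action of $(R^\vee)^s$ on $\bV(\cF)\{1\}\oplus\bV(\cF)\{-1\}$. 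On $\theta_s\bV(\cF)$, $\alpha_s^\vee$ acts by the off-diagonal operator interchanging the two summands (up to multiplication by $(\alpha_s^\vee)^2\in(R^\vee)^s$), so the remaining task is to show that postcomposition with $\mu_{\xi_s\cF,\alpha_s^\vee}$ realizes this same operator. Here Proposition~\ref{prop:wall-crossing}(4) does not apply since $\alpha_s^\vee\notin(R^\vee)^s$ --- indeed the isomorphism $\xi_s\cP_e\cong\cP_e\la1\ra\oplus\cP_e\la-1\ra$ cannot be monodromy-compatible, as otherwise $\alpha_s^\vee$ would act diagonally --- so one computes $\mu_{\xi_s\cF,\alpha_s^\vee}$ directly from the explicit model for $\xi_s\cF$ in~\S\ref{ss:wall-crossing-construction} and the construction in~\S\ref{sec:monodromy}: lift $\xi_s\cF$ to a pseudo complex over $\Parity(\BGB)$, lifting in particular the correction components (built from $\mu_{\cF,\alpha_s^\vee}$), and read off $\mu_{\xi_s\cF,\alpha_s^\vee}$ as $\Phi_{\alpha_s^\vee}$ applied to $\widetilde{d}\circ\widetilde{d}$. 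Relations~\eqref{eq:polynomial-forcing} and~\eqref{eq:wall-crossing-key-computation-objects} show that the $B_s$-part of $\widetilde{d}\circ\widetilde{d}$ cancels the correction components modulo $\fm^2$, leaving exactly the off-diagonal ``identity-like'' contribution, namely the swap. Naturality in $\cF$ is inherited from each step. I expect this last computation to be the main obstacle: the categorical reductions are routine, but carefully extracting $\mu_{\xi_s\cF,\alpha_s^\vee}$ from the differential and recognizing it as the swap requires precise bookkeeping of the pseudo-complex lift and of $\Phi_{\alpha_s^\vee}$.
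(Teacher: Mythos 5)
Your parts (1) and (2) are fine, and your multiplicity count showing that $\xi_s\cP_e$ has the same graded standard multiplicities as $\cP_e\la1\ra\oplus\cP_e\la-1\ra$ is correct. The genuine gap is the very first move of part (3): the self-biadjointness of $\xi_s$ (and hence the projectivity of $\xi_s\cP_e$ and the isomorphism $\bV(\xi_s\cF)\cong\bigoplus_n\Hom(\xi_s\cP_e,\cF\la n\ra)$) is not available in this framework and is not a routine consequence of the Frobenius structure on $B_s$ or of Proposition~\ref{prop:wall-crossing}(5). Proposition~\ref{prop:wall-crossing}(5) only says $\xi_s$ intertwines functors between monodromic triples; it produces no unit or counit for $\xi_s$ itself. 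The functor $\xi_s$ is not (in this paper) identified with $\pi^{s*}\pi^s_*$ up to shift, and its construction involves the correction components built from the monodromy, whose morphisms in $\Parity(\UGB)$ are only defined modulo $\fm$; building adjunction data for such a functor is precisely the kind of ``free-monodromic'' monoidal structure that is deliberately avoided here and only developed later in~\cite{AMRW}. Without it, your claims that $\xi_s$ preserves projectives and that $\xi_s\cP_e\cong\cP_e\la1\ra\oplus\cP_e\la-1\ra$ are unsupported. Moreover, even granting the adjunction, the step you yourself flag as the main obstacle is where the real content lies: you must transport postcomposition by $\mu_{\xi_s\cF,\alpha_s^\vee}$ (computed in the explicit pseudo-complex model) through the adjunction isomorphism and a chosen splitting of $\xi_s\cP_e$, and show the resulting operator on $\bV(\cF)\{1\}\oplus\bV(\cF)\{-1\}$ matches the $\alpha_s^\vee$-action on $R^\vee\otimes_{(R^\vee)^s}\bV(\cF)\{1\}$; this compatibility of three separately chosen identifications is not addressed, so the $R^\vee$-module matching remains a sketch.

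The paper's proof avoids all of this. It constructs the comparison map directly: fix $\can_s\colon\cP_e\to\xi_s\cP_e\la-1\ra$, nonzero and unique up to scalar by Lemma~\ref{lem:delta-in-translated-projective-cover-of-delta}, and send $\varphi\colon\cP_e\to\cF\la n+1\ra$ to $\xi_s\varphi\la-1\ra\circ\can_s$. This map is $(R^\vee)^s$-linear by Lemma~\ref{lem:mu-as-graded-center} together with Proposition~\ref{prop:wall-crossing}(4), hence induces $\gamma\colon\theta_s\circ\bV\to\bV\circ\xi_s$ by the universal property of $R^\vee\otimes_{(R^\vee)^s}(-)$ --- no adjunction for $\xi_s$, no computation of $\xi_s\cP_e$, and no general computation of $\mu_{\xi_s\cF,\alpha_s^\vee}$ is needed. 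Since both sides are cohomological, the five lemma reduces the isomorphism claim to the simples $\IC_w$: for $w\neq e$ both sides vanish, using $\IC_w\cong\pi^{t*}\IC_{\overline w}\{1\}$ and~\eqref{eq:wall-crossing-push-pull}; for $w=e$ a two-dimensional count plus the explicitly known monodromy of $\cT_s=\xi_s\delta$ (the worked example of~\S\ref{ss:monodromy-idea}) reduces everything to the nonvanishing of $\xi_s\pi\la-1\ra\circ\can_s$, which follows from t-exactness of $\xi_s$ and Lemma~\ref{lem:delta-in-translated-projective-cover-of-delta}. If you want to salvage your route, you would first have to prove the biadjointness of $\xi_s$ within the pseudo-complex formalism, which is a substantial project in itself.
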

The following proof uses subregular Soergel theory (see~\S\ref{sss:ssbim}).
\begin{proof}
 The first two claims are clear from the construction. For the last claim, let $s \in S$. Fix a nonzero morphism $\can_s\colon \cP_e \to \xi_s\cP_e\la-1\ra$ in $\Pmix(\UGB)$, unique up to scalar by Lemma~\ref{lem:delta-in-translated-projective-cover-of-delta}. For each $\cF \in \Dmix(\UGB)$, define the graded $\bk$-linear map
 \[
  \gamma'_\cF\colon \bV(\cF)\{1\} \to \bV(\xi_s\cF)
 \]
 on homogeneous elements by
 \begin{align*}
  (\cP_e \xrightarrow{\varphi} \cF\la n+1 \ra) \mapsto (\cP_e \xrightarrow{\can_s} \xi_s\cP_e \la-1\ra \xrightarrow{\xi_s\varphi \la-1\ra} \xi_s\cF \la n \ra).
 \end{align*}
 It follows from Lemma~\ref{lem:mu-as-graded-center} and Proposition~\ref{prop:wall-crossing}(4) that $\gamma'_\cF$ is $(R^\vee)^s$-linear, so it induces a natural transformation
 \[
  \gamma\colon \theta_s \circ \bV \to \bV \circ \xi_s.
 \]
 
 We claim that this is an equivalence. Since $\bV$ is cohomological and $\theta_s$ is exact (because $R^\vee$ is free over $(R^\vee)^s$), $\theta_s \circ \bV$ is cohomological. Similarly, since $\xi_s$ is exact, $\bV \circ \xi_s$ is cohomological. Thus by the five lemma, it suffices to show that
 \begin{equation} \label{eq:gamma_IC}
  \gamma_{\IC_w}\colon R^\vee \otimes_{(R^\vee)^s} \bV(\IC_w)\{1\} \to \bV(\xi_s\IC_w)
 \end{equation}
 is an isomorphism for all $w \in W$.

 First suppose $w \neq e$, so $\bV(\IC_w) = 0$. We claim that $\bV(\xi_s\IC_w) = 0$. There is some $t \in S$ (which may be $s$) with $wt < w$, and $\IC_w \cong \pi^{t*}\IC_{\overline{w}}\{1\}$, where $\overline{w}$ is the image of $w$ in $W/\{1, t\}$. So by~\eqref{eq:wall-crossing-push-pull}, $\xi_s\IC_w \cong \xi_s\pi^{t*}\IC_{\overline{w}}\{1\} \cong \pi^{t*}\xi_s\IC_{\overline{w}}\{1\}$. Since $\pi^{t*}\{1\}$ is perverse t-exact, this shows that no twist of $\delta$ can appear as a composition factor of $\xi_s\IC_w$, and the claim follows.

 Now let $w = e$. Then as graded $\bk$-vector spaces, both sides of~\eqref{eq:gamma_IC} are isomorphic to $\bk\{1\} \oplus \bk\{-1\}$. From the monodromy of $\xi_s\delta = \cT_s$, we know that the action of $\alpha_s^\vee$ on $\bV(\xi_s\delta)$ maps the degree $-1$ part isomorphically to the degree $1$ part. Hence to show that $\gamma_\delta$ is an isomorphism, it is enough to check it in degree $-1$, i.e.~that $\gamma_\delta(1 \otimes \pi) = \xi_s\pi\la-1\ra  \circ \can_s$ is nonzero. Since $\xi_s$ is t-exact, $\xi_s\pi\la-1\ra$ is an epimorphism, so the map
 \[
  \xi_s\pi\la-1\ra \circ -\colon \Hom(\cP_e, \xi_s\cP_e\la-1\ra) \to \Hom(\cP_e, \xi_s\delta\la-1\ra)
 \]
 is surjective. But the right hand side is one-dimensional, and so is the left hand side by Lemma~\ref{lem:delta-in-translated-projective-cover-of-delta}, so this is an isomorphism. In particular, $\xi_s\pi\la-1\ra  \circ \can_s \neq 0$.
\end{proof}

\subsection{Proof of the main result}
\label{ss:main-result-proof}
We begin with the following analogue of~\cite[Proposition~5.3]{AR-II}.
\begin{prop} \label{prop:tilting-parity-duality}
 The $\bV$ functor restricts to an equivalence of additive categories
 \[
  \nu\colon \Tmix(\UGB) \simto \Parity(\BGUvee)
 \]
 satisfying $\nu \circ \la1\ra \cong \{1\} \circ \nu$ and $\nu(\cT_w) \cong \cE^\vee_w$.
\end{prop}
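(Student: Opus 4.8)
The plan is to imitate the proof of \cite[Proposition~5.3]{AR-II} (hence ultimately of \cite{BY}), combining the compatibility of $\bV$ with wall-crossing (Proposition~\ref{prop:V-functor-properties}) with the two Hom formulas \eqref{eq:tilting-hom-formula} and \eqref{eq:Soergel-module-hom-formula}.

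\emph{Step 1: $\bV$ on Bott--Samelson tilting sheaves.} First I would pin down $\bV(\delta)$. Since $\delta = \IC_e$ is simple and $\cP_e$ is its projective cover, $\bigoplus_n \Hom(\cP_e, \delta\la n\ra) \cong \bk$ concentrated in degree $0$; and $\delta$ lies in the essential image of $\For$, so $\mu_\delta(\fm^\vee) = 0$ by Proposition~\ref{prop:monodromy}, whence $\bV(\delta) \cong \bk = \cE_e^\vee$ as graded $R^\vee$-modules. Iterating the isomorphism $\theta_s\circ\bV \cong \bV\circ\xi_s$ of Proposition~\ref{prop:V-functor-properties} along an expression $\uw = s_1\cdots s_k$ then gives
\[
 \bV(\cT_\uw) = \bV(\xi_{s_1}\cdots\xi_{s_k}\delta) \cong \theta_{s_1}\cdots\theta_{s_k}(\bk) = \cE_\uw^\vee,
\]
compatibly with $\bV\circ\la1\ra = \{1\}\circ\bV$. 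Thus $\bV$ sends Bott--Samelson tilting sheaves to Bott--Samelson Soergel modules on the Langlands dual side, and (as every tilting sheaf is a summand of a sum of shifts of $\cT_\uw$'s, and summands of $\cE_\uw^\vee$'s are Soergel modules) it restricts to a functor $\Tmix(\UGB) \to \Parity(\BGUvee)$.

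\emph{Step 2: full faithfulness on Bott--Samelsons.} This is the heart of the matter. For all expressions $\ux, \uy$ I would show that the map
\[
 \bigoplus_{n\in\Z}\Hom(\cT_\ux, \cT_\uy\la n\ra) \longrightarrow \bigoplus_{n\in\Z}\Hom_{R^\vee\lh\gmod}(\cE_\ux^\vee, \cE_\uy^\vee\{n\})
\]
induced by $\bV$ is an isomorphism. Both sides are finite-dimensional in each degree, and by \eqref{eq:tilting-hom-formula} and the left-module version of \eqref{eq:Soergel-module-hom-formula} for the realization $(W,\fh^*)$ both have graded dimension $\la\uH_\ux, \uH_\uy\ra$; so it suffices to prove surjectivity. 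For that I would use that $\Parity^\BS(\BGUvee)$ is generated, as an additive category, by $\bk$ together with the functors $\theta_s$ and the generating natural transformations of Soergel's diagrammatic presentation --- the ``dot'' maps $\theta_s \to (-)\{1\}$ and $(-) \to \theta_s\{1\}$, and the ``trivalent'' maps relating $\theta_s$ to $\theta_s\theta_s$ (the components of $\theta_s\theta_s \cong \theta_s\{1\}\oplus\theta_s\{-1\}$). Under the natural isomorphism $\gamma\colon \theta_s\circ\bV \simto \bV\circ\xi_s$, these must be matched with the corresponding transformations for $\xi_s$: the dots with the (convolved) maps $\delta\la1\ra \hookrightarrow \cT_s$ and $\cT_s \twoheadrightarrow \delta\la-1\ra$, and the trivalents with the components of a decomposition $\xi_s\xi_s \cong \xi_s\la1\ra \oplus \xi_s\la-1\ra$ --- which exists because $\cT_{ss} = \xi_s\cT_s$ is tilting with $\ch_\Delta = \uH_s^2 = (v+v^{-1})\uH_s$, hence $\cong \cT_s\la1\ra \oplus \cT_s\la-1\ra$, the decomposition then being transported to functors via \eqref{eq:wall-crossing-conv}. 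Granting these compatibilities of $\gamma$, every morphism among the $\cE_\uw^\vee$ is a composite of $\bV$-images of morphisms among the $\cT_\uw$, so the displayed map is surjective, hence an isomorphism. I expect the verification that $\gamma$ intertwines the (co)unit and idempotent data --- a bookkeeping exercise with the explicit formula for $\gamma$ from the proof of Proposition~\ref{prop:V-functor-properties} --- to be the main obstacle.

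\emph{Step 3: conclusion.} Step~2 shows $\bV$ is fully faithful on the additive hull of $\{\cT_\uw\la n\ra\}$, and Step~1 that its essential image there is $\Parity^\BS(\BGUvee)$. Since $\Tmix(\UGB)$ is closed under direct summands inside the abelian category $\Pmix(\UGB)$ and $\Parity(\BGUvee)$ is idempotent-complete by construction, passing to Karoubi envelopes yields a fully faithful and essentially surjective functor $\nu\colon \Tmix(\UGB) \to \Parity(\BGUvee)$, i.e.\ an equivalence, with $\nu\circ\la1\ra \cong \{1\}\circ\nu$ inherited from $\bV$. Finally, for $w \in W$ fix a reduced expression $\uw$ of $w$: by Proposition~\ref{prop:tilting-bs-characterization}, $\cT_w$ is the unique indecomposable summand of $\cT_\uw$ not occurring in any $\cT_\ux$ with $\ell(\ux) < \ell(\uw)$, so $\nu(\cT_w)$ is the unique indecomposable summand of $\cE_\uw^\vee$ not occurring in any $\cE_\ux^\vee$ with $\ell(\ux) < \ell(\uw)$, which by Proposition~\ref{prop:sbim-bs-characterization} (applied to $(W, \fh^*)$) is $\cE_w^\vee$. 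Hence $\nu(\cT_w) \cong \cE_w^\vee$.
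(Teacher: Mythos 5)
Your Steps 1 and 3 match the paper: $\bV(\cT_\uw)\cong\cE^\vee_\uw$ via Proposition~\ref{prop:V-functor-properties}, and the identification $\nu(\cT_w)\cong\cE^\vee_w$ via the two Bott--Samelson characterizations (Propositions~\ref{prop:sbim-bs-characterization} and~\ref{prop:tilting-bs-characterization}). The divergence, and the problem, is Step 2. The paper does \emph{not} prove fullness directly: it proves \emph{faithfulness} by the soft argument of \cite[\S2.1]{BBM}, which rests on Lemma~\ref{lem:delta-in-standard-and-costandard} (the multiplicity-one occurrence of $\delta$ at the socle/top of $\Delta_w$, $\nabla_w$, detected by $\Hom(\cP_e,-)$), and then concludes by the dimension count \eqref{eq:Soergel-module-hom-formula} = \eqref{eq:tilting-hom-formula}, since an injective graded map between spaces of equal finite graded dimension is an isomorphism. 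You never invoke Lemma~\ref{lem:delta-in-standard-and-costandard} and instead try to prove \emph{surjectivity} by exhibiting generators of the morphism spaces of $\Parity^\BS(\BGUvee)$ in the image of $\bV$.

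That route has a genuine gap, in two respects. First, the generation claim is incomplete: in the Elias--Williamson presentation, morphisms between Bott--Samelson objects are generated by polynomials, dots, trivalent vertices \emph{and} the $2m_{st}$-valent vertices for each finite $m_{st}$; the latter are not composites of dots and trivalents, so for any $W$ of rank $\ge 2$ your list of generators does not span the Hom spaces. Second, and more seriously, the statement that $\gamma$ intertwines these generating natural transformations with corresponding data for the $\xi_s$ (units/counits $\delta\la1\ra\to\cT_s$, $\cT_s\to\delta\la-1\ra$ convolved on, idempotents for $\xi_s\xi_s\cong\xi_s\la1\ra\oplus\xi_s\la-1\ra$, and the braid-type morphisms) is precisely the construction of a categorical diagrammatic action on the tilting/free-monodromic side; this is the substantial content of the subsequent work \cite{AMRW}, not a bookkeeping exercise, and the present paper is structured exactly so as to avoid it. As written, the key step of your argument is both under-generated and unverified, so the proposal does not constitute a proof; to repair it along the paper's lines, replace Step 2 by the faithfulness argument from Lemma~\ref{lem:delta-in-standard-and-costandard} plus the equality of graded dimensions you already have.
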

\begin{proof}
 The claim about the interaction with shifts follows from the corresponding claim in Proposition~\ref{prop:V-functor-properties}. The last claim in Proposition~\ref{prop:V-functor-properties} implies that $\bV(\cT_\uw) \cong \cE^\vee_\uw$ for any expression $\uw$. It then follows from the Bott--Samelson characterization of indecomposable parity complexes (Proposition~\ref{prop:sbim-bs-characterization}) and indecomposable tilting objects (Proposition~\ref{prop:tilting-bs-characterization}) that $\bV$ restricts to a functor $\nu$ as claimed, and that $\nu(\cT_w) \cong \cE^\vee_w$.

 By the argument of~\cite[\S2.1]{BBM}, it follows from Lemma~\ref{lem:delta-in-standard-and-costandard} that $\nu$ is faithful. It therefore suffices to compare dimensions of Hom spaces between Bott--Samelson objects. These agree by the Soergel Hom formula~\eqref{eq:Soergel-module-hom-formula} and its tilting analogue~\eqref{eq:tilting-hom-formula}.
\end{proof}

We are now ready to prove our main result.
\begin{proof}[Proof of Theorem~\ref{thm:mkd}]
 Since $\Pmix(\UGB)$ is graded highest weight, the natural functors
 \begin{equation} \label{eq:tilt-perv-der-eq}
  \Kb\Tmix(\UGB) \to \Db\Pmix(\UGB) \to \Dmix(\UGB)
 \end{equation}
 are equivalences by \cite[Lemma~B.5]{AR-I} (cf.~\cite[Lemma~3.15]{AR-II}). Define $\kappa$ as the composition
 \begin{multline*}
 \Dmix(\UGB) = \Kb\Parity(\UGB) \xrightarrow[\sim]{\Kb\nu} \Kb\Tmix(\BGUvee) \\
 \xrightarrow[\sim]{\text{\eqref{eq:tilt-perv-der-eq}}} \Dmix(\BGUvee).
 \end{multline*}
 The claims about the interaction with shifts are clear. Proposition~\ref{prop:tilting-parity-duality} implies $\kappa(\cT_w) \cong \cE^\vee_w$. This is enough to determine $\kappa(\Delta_w)$ and $\kappa(\nabla_w)$ as in the proof of~\cite[Lemma~5.2]{AR-II}.
 
 It remains to show that $\kappa(\cE_w) \cong \cT^\vee_w$. Consider the functor
 \[
  \kappa^\vee\colon \Dmix(\BGUvee) \simto \Dmix(\UGB)
 \]
 defined in the same way for $G^\vee$, so all but the last claim is also known for $\kappa^\vee$. Since $\cT^\vee_w$ is a successive extension of various $\Delta^\vee_x\la n \ra$ (resp.~$\nabla^\vee_x\la n \ra$), we may apply $\kappa \circ \kappa^\vee$ to conclude the same for $\kappa(\cE_w)$. Hence $\kappa(\cE_w)$ is perverse. Repeating this argument, we deduce that $\kappa(\cE_w)$ is tilting. Since $\cE_w$ is indecomposable, so is $\kappa(\cE_w)$. By inducting on $w$ as in the argument of~\cite[Lemma~5.2]{AR-II}, we see that the support condition and the normalization of $\cE_w$ implies the same for $\kappa(\cE_w)$. These conditions characterize $\cT^\vee_w$.
\end{proof}

\bibliographystyle{alpha}
\bibliography{refs.bib}

\begin{thebibliography}{JMW14}

\bibitem[AMRW]{AMRW}
Pramod~N. Achar, Shotaro Makisumi, Simon Riche, and Geordie Williamson.
\newblock Free-monodromic mixed tilting sheaves on flag varieties, preprint
  ar{X}iv:1703.05843.

\bibitem[AR13]{AR}
Pramod~N. Achar and Simon Riche.
\newblock Koszul duality and semisimplicity of {F}robenius.
\newblock {\em Ann. Inst. Fourier (Grenoble)}, 63(4):1511--1612, 2013.

\bibitem[AR16a]{AR-I}
Pramod~N. Achar and Simon Riche.
\newblock Modular perverse sheaves on flag varieties {I}: tilting and parity
  sheaves.
\newblock {\em Ann. Sci. \'Ec. Norm. Sup\'er. (4)}, 49(2):325--370, 2016.
\newblock With a joint appendix with Geordie Williamson.

\bibitem[AR16b]{AR-II}
Pramod~N. Achar and Simon Riche.
\newblock Modular perverse sheaves on flag varieties, {II}: {K}oszul duality
  and formality.
\newblock {\em Duke Math. J.}, 165(1):161--215, 2016.

\bibitem[AS98]{AS}
Luchezar~L. Avramov and Li-Chuan Sun.
\newblock Cohomology operators defined by a deformation.
\newblock {\em J. Algebra}, 204(2):684--710, 1998.

\bibitem[BBD82]{BBD}
A.~A. Be{\u\i}linson, J.~Bernstein, and P.~Deligne.
\newblock Faisceaux pervers.
\newblock In {\em Analysis and topology on singular spaces, {I} ({L}uminy,
  1981)}, volume 100 of {\em Ast\'erisque}, pages 5--171. Soc. Math. France,
  Paris, 1982.

\bibitem[BBM04]{BBM}
A.~Beilinson, R.~Bezrukavnikov, and I.~Mirkovi{\'c}.
\newblock Tilting exercises.
\newblock {\em Mosc. Math. J.}, 4(3):547--557, 782, 2004.

\bibitem[BGS96]{BGS}
Alexander Beilinson, Victor Ginzburg, and Wolfgang Soergel.
\newblock Koszul duality patterns in representation theory.
\newblock {\em J. Amer. Math. Soc.}, 9(2):473--527, 1996.

\bibitem[BM01]{BMP}
Tom Braden and Robert MacPherson.
\newblock From moment graphs to intersection cohomology.
\newblock {\em Math. Ann.}, 321(3):533--551, 2001.

\bibitem[BY13]{BY}
Roman Bezrukavnikov and Zhiwei Yun.
\newblock On {K}oszul duality for {K}ac-{M}oody groups.
\newblock {\em Represent. Theory}, 17:1--98, 2013.

\bibitem[Eis80]{Eis}
David Eisenbud.
\newblock Homological algebra on a complete intersection, with an application
  to group representations.
\newblock {\em Trans. Amer. Math. Soc.}, 260(1):35--64, 1980.

\bibitem[EK10]{EKh}
Ben Elias and Mikhail Khovanov.
\newblock Diagrammatics for {S}oergel categories.
\newblock {\em Int. J. Math. Math. Sci.}, pages Art. ID 978635, 58, 2010.

\bibitem[Eli16]{Eli-dihedral}
Ben Elias.
\newblock The two-color {S}oergel calculus.
\newblock {\em Compos. Math.}, 152(2):327--398, 2016.

\bibitem[EW14]{EW-hodge}
Ben Elias and Geordie Williamson.
\newblock The {H}odge theory of {S}oergel bimodules.
\newblock {\em Ann. of Math. (2)}, 180(3):1089--1136, 2014.

\bibitem[EW16]{EW-soergel-calculus}
Ben Elias and Geordie Williamson.
\newblock Soergel calculus.
\newblock {\em Represent. Theory}, 20:295--374, 2016.

\bibitem[Fie08a]{Fie-coxeter}
Peter Fiebig.
\newblock The combinatorics of {C}oxeter categories.
\newblock {\em Trans. Amer. Math. Soc.}, 360(8):4211--4233, 2008.

\bibitem[Fie08b]{Fie-verma}
Peter Fiebig.
\newblock Sheaves on moment graphs and a localization of {V}erma flags.
\newblock {\em Adv. Math.}, 217(2):683--712, 2008.

\bibitem[Gul74]{Gul}
Tor~H. Gulliksen.
\newblock A change of ring theorem with applications to {P}oincar\'{e} series
  and intersection multiplicity.
\newblock {\em Math. Scand.}, 34:167--183, 1974.

\bibitem[JMW14]{JMW}
Daniel Juteau, Carl Mautner, and Geordie Williamson.
\newblock Parity sheaves.
\newblock {\em J. Amer. Math. Soc.}, 27(4):1169--1212, 2014.

\bibitem[Lib15]{Lib}
Nicolas Libedinsky.
\newblock Light leaves and {L}usztig's conjecture.
\newblock {\em Adv. Math.}, 280:772--807, 2015.

\bibitem[Mak]{Mak-moment}
Shotaro Makisumi.
\newblock Mixed modular perverse sheaves on moment graphs, preprint
  ar{X}iv:1703.01571.

\bibitem[MM10]{MM}
I.~Marin and J.~Michel.
\newblock Automorphisms of complex reflection groups.
\newblock {\em Represent. Theory}, 14:747--788, 2010.

\bibitem[Ric]{Ric}
Simon Riche.
\newblock La th\'{e}orie de {H}odge des bimodules de {S}oergel [d'apr\`{e}s
  {S}oergel et {E}lias--{W}illiamson].
\newblock to appear in S\'{e}minaire Bourbaki. Vol. 2017/2018., Ast\'{e}risque,
  Exp. No. 1139, preprint ar{X}iv:1711.02464.

\bibitem[RSW14]{RSW}
Simon Riche, Wolfgang Soergel, and Geordie Williamson.
\newblock Modular {K}oszul duality.
\newblock {\em Compos. Math.}, 150(2):273--332, 2014.

\bibitem[Sau18]{Sau}
Marc Sauerwein.
\newblock Koszul duality and {S}oergel bimodules for dihedral groups.
\newblock {\em Trans. Amer. Math. Soc.}, 370(2):1251--1283, 2018.

\bibitem[Soe90]{Soe90}
Wolfgang Soergel.
\newblock Kategorie {$\mathcal{O}$}, perverse {G}arben und {M}oduln \"uber den
  {K}oinvarianten zur {W}eylgruppe.
\newblock {\em J. Amer. Math. Soc.}, 3(2):421--445, 1990.

\bibitem[Soe92]{Soe92}
Wolfgang Soergel.
\newblock The combinatorics of {H}arish-{C}handra bimodules.
\newblock {\em J. Reine Angew. Math.}, 429:49--74, 1992.

\bibitem[Soe00]{Soe00}
Wolfgang Soergel.
\newblock On the relation between intersection cohomology and representation
  theory in positive characteristic.
\newblock {\em J. Pure Appl. Algebra}, 152(1-3):311--335, 2000.
\newblock Commutative algebra, homological algebra and representation theory
  (Catania/Genoa/Rome, 1998).

\bibitem[Soe07]{Soe07}
Wolfgang Soergel.
\newblock Kazhdan-{L}usztig-{P}olynome und unzerlegbare {B}imoduln \"uber
  {P}olynomringen.
\newblock {\em J. Inst. Math. Jussieu}, 6(3):501--525, 2007.

\bibitem[Ver83]{Ver}
J.-L. Verdier.
\newblock Sp\'ecialisation de faisceaux et monodromie mod\'er\'ee.
\newblock In {\em Analysis and topology on singular spaces, {II}, {III}
  ({L}uminy, 1981)}, volume 101 of {\em Ast\'erisque}, pages 332--364. Soc.
  Math. France, Paris, 1983.

\bibitem[Wil11]{Wil-ssbim}
Geordie Williamson.
\newblock Singular {S}oergel bimodules.
\newblock {\em Int. Math. Res. Not. IMRN}, (20):4555--4632, 2011.

\end{thebibliography}

\end{document}